\newtheorem{theorem}{Theorem}[section]
\newtheorem{lemma}[theorem]{Lemma}
\newtheorem{proposition}[theorem]{Proposition}
\theoremstyle{definition}
\newtheorem{definition}[theorem]{Definition}
\newtheorem{remark}[theorem]{Remark}
\numberwithin{equation}{section}
\begin{document}
\author{M. C. Bortolan}
\address[M. C. Bortolan]{Departamento de Matem\'atica, Universidade Federal de Santa
Catarina, Florian\'opolis - SC, Brasil.}
\email{m.bortolan@ufsc.br}
\author{A. N. Carvalho}
\address[A. N. Carvalho]{Instituto de Ci\^{e}ncias Ma\-te\-m\'{a}\-ti\-cas e de Computa\c{c}\~{a}o\\ 
Universidade de S\~{a}o Paulo, Campus de S\~{a}o Carlos, Caixa Postal 668, S\~{a}o Carlos SP, Brazil.}
\thanks{(A. N. Carvalho) Partially supported by FAPESP grants 2020/14075-6 and CNPq 306213/2019-2}
\email{andcarva@icmc.usp.br}
\author{P. Mar\'in-Rubio}
\address[P. Mar\'in-Rubio]{Departamento de Ecuaciones Diferenciales y An\'alisis
Num\'erico, Universidad de Sevilla, C/Tarfia s/n, 41012-Sevilla, Espa\~na.}
\thanks{(P. Mar\'in-Rubio) Partially supported by the Spanish Ministry of Science, Innovation and
Universities, project PGC2018-096540-B-I00, by the Junta de Andaluc\'{\i}a
and FEDER, projects US-1254251 and P18-FR-4509.}
\email{pmr@us.es}
\author{J. Valero}
\address[J. Valero]{Departamento de Estad\'istica, Matem\'aticas e Inform\'atica,
Universidad Miguel Hernandez de Elche, Elche - Alicante, Espa\~na.}
\thanks{(J. Valero) Partially supported by the Spanish Ministry of Science, Innovation and Universities, project PGC2018-096540-B-I00, by the Spanish Ministry of Science and Innovation, project PID2019-108654GB-I00, and by the Junta de Andalucía and FEDER, project P18-FR-4509.}
\email{jvalero@umh.es}
\title[Weak global attractors]{Weak global attractor for the $3D$%
-Navier-Stokes equations via the globally modified Navier-Stokes equations}
\keywords{$3D$-Navier-Stokes equations, weak global attractors, globally
modified Navier-Stokes equations, semilinear parabolic equations, $\epsilon$%
-regularity}
\subjclass[2020]{ 35Q30, 35B41, 35K58, 76D05. }

\begin{abstract}
In this paper we obtain the existence of a \textit{weak global attractor}
for the three-dimensional Navier-Stokes equations, that is, a weakly compact
set with an invariance property, that uniformly attracts solutions, with
respect to the weak topology, for initial data in bounded sets. To that end,
we define this weak global attractor in terms of limits of solutions of the
globally modified Navier-Stokes equations in the weak topology. We use the
theory of semilinear parabolic equations and $\epsilon$-regularity to obtain
the local well posedness for the globally modified Navier-Stokes equations
and the existence of a global attractor and its regularity.
\end{abstract}

\maketitle

\section{Introduction}

The understanding of the asymptotic behavior of solutions for the
three-dimensional Navier-Stokes equations is still a big challenge. In
particular, the existence of the global attractor with respect to the strong
topology of the phase space is up to now an open problem. The main
difficulties in proving such results come from the facts that it is not
known whether the weak solutions are continuous and also if uniqueness of
the Cauchy problem is true. As proved in \cite{Ball00} as a conditional
result, if all the weak solution were continuous, then they would define a
multivalued semiflow possessing a global attractor with respect to the
strong topology (an extension of this to the stochastic framework is \cite%
{mrr}). Later on, in \cite{KapVal07} this result was extended by showing
that if strong solutions were globally defined, then at least one weak
solution would be continuous, ensuring then the existence of a multivalued
semiflow having a global attractor, which is again a conditional theorem.

Another approach that has been helpful in order to handle this problem is to
use the theory of trajectory attractors (see \cite{CheVi97,CheTitiVi07,Sell,CheVi02,CheTitiVi07B} for
instance, also \cite{CheskidovLu} for the nonautonomous case, and \cite%
{FlandoliSchmalfuss}\ for the stochastic equation). The idea behind these
types of attractors is to avoid the problem of the lack of continuity by
using a weaker topology, namely, the topology of square integrable functions
on finite intervals of time. The drawback of this kind of attractor is that
the connection with the original physical phase space is lost.

The existence of the global attractor has been obtained when we
consider the weak topology of the phase space. In view of the lack of
uniqueness, the proper way to do that is to define a multivalued semiflow
and prove at least the existence and negative invariance of the attractor.
This question is fraught with important difficulties. If we use the
Leray-Hopf solutions satisfying the energy inequality almost everywhere for
positive moments of time, but not for the initial one, then a multivalued
semiflow is defined but we cannot obtain a bounded absorbing set. Therefore,
we are not able to prove the existence of a global attractor in this
situation. On the other hand, if we add the Leray-Hopf solutions satisfying
the energy inequality at the initial moment of time, then a bounded
absorbing set exists, but the translation of solutions fails to be a
solution, so we cannot define a multivalued semiflow. A detailed discussion
of these difficulties is given in \cite{BCKV}. 

The first result in this direction was published in \cite{FoiasTemam} (see 
\cite{FoiasRosaTemam} as well). The\ so-called universal attractor,
consisting of the bounded complete weak solutions, was defined. This set was
proved to attract uniformly in the weak topology of the phase space the
Leray-Hopf solutions starting at a bounded set and which satisfy the energy
inequality at the initial moment of time. In such a situation, as remarked
before, a semiflow cannot be defined and the attractor was shown to be
positively invariant but not negatively invariant. This problem was overcome
in \cite{KapVal07} (see also \cite{CheskidovLu}) by restricting the phase
space to a suitable ball, proving the existence of the weak global attractor
in\ both the autonomous and nonautonomous cases. However, the question about
the convergence of the solutions starting outside this ball to the global
attractor remained open. 

In this paper, in order to define a weak global attractor in the whole phase
space, we follow the approach of \cite{CarChoDlo} by taking suitable
approximations. In this way, we are able to define a global attractor which
uniformly attracts, with respect to the weak topology, the solutions
starting at any bounded set. Moreover, the solutions define a multivalued
semiflow in the attractor (although not in the whole space) and this set is
negatively and positively invariant with respect to it.

In order to be more specific about the results proved here we consider the
Navier-Stokes equations (NSE) 
\begin{equation}  \label{eq:NS}
\left\{ 
\begin{array}{ll}
u_{t}-\nu\Delta u+(u\cdot\nabla)u=-\nabla p+f, & t>0, \ x\in\Omega, \\ 
\mathrm{div} \ u=0, & t>0, \ x\in\Omega, \\ 
u=0, & t>0, \ x\in\partial\Omega, \\ 
u(0,x)=u_0(x), & x\in\Omega,%
\end{array}
\right.
\end{equation}
where $\Omega\subset\mathbb{R}^{3}$ is a bounded domain with smooth boundary 
$\partial \Omega$, $u=(u_1,u_2,u_3)$ is the unknown velocity field of the
fluid, $\nu>0$ is a given constant kinematic viscosity, $p$ is the unknown
pressure, $u_0$ is a initial velocity field and $f$ is a constant external
force taken in $L^2(\Omega)^3$.

We also consider the globally modified Navier-Stokes equations (GMNSE) 
\begin{equation}  \label{NavierModified}
\left\{%
\begin{array}{ll}
u_t-\nu\Delta u+F_N(u)(u\cdot \nabla)u=-\nabla p+f, & t>0, \ x\in \Omega, \\ 
\mathrm{div}\ u=0, & t>0, \ x\in \Omega, \\ 
u=0, & t>0, \ x\in \partial \Omega \\ 
u(0,x)=u_0(x), & x\in \Omega,%
\end{array}%
\right.
\end{equation}
where $N$ is a fixed positive number and $F_N(u) = f_N(\|u\|_{L^4(\Omega)})$, where $%
f_N\colon [0,\infty)\to [0,\infty)$ is defined by 
\begin{equation}  \label{defFN}
f_N(r) =\min\left\{1,\frac Nr\right\} \quad \hbox{ for } r\geqslant 0,
\end{equation}
 which were
introduced in \cite{CKR} with the term $f_N(\|u\|_V)$ instead of $f_N(\|u\|_{L^4(\Omega)})$. It follows in a similar way as in \cite{CKR, Romito}, for the case $f_N(\|u\|_V)$, that
problem \eqref{NavierModified} generates a family of continuous semigroups $%
\{S_N(t)\colon t\geqslant 0\}$ in $H$ given by $S_N(t)u_0 =u_N(t,u_0)$,
where $u_N(\cdot,u_0)$ is the unique classical solution to %
\eqref{NavierModified}. We will give a different approach to obtain the
semigroup $\{S_N(t)\colon t\geqslant 0\}$ and the existence of its global
attractors, which will provide additional regularity of solutions and of the
global attractors.

Following \cite{Lions69,Temam77,Temam95}, if 
\begin{equation*}
\mathcal{V} = \{u\in C_0^\infty(\Omega)^3\colon \mathrm{div} \ u = 0\}, 
\end{equation*}
where $C_0^\infty(\Omega)^3$ is the set of smooth functions $u\colon
\Omega\to \mathbb{R}^3$ with compact support in $\Omega$ we define 
\begin{equation*}
H = \hbox{ the closure of } \mathcal{V} \hbox{ in } L^2(\Omega)^3, 
\end{equation*}
with inner product $(\cdot,\cdot)$ and norm $\|\cdot\|_H$ where, for $u,v\in
L^2(\Omega)^3$, 
\begin{equation*}
(u,v) = \sum_{j=1}^3 \int_\Omega u_j v_j. 
\end{equation*}
and 
\begin{equation*}
V = \hbox{ the closure of } \mathcal{V} \hbox{ in } H^1_0(\Omega)^3, 
\end{equation*}
with scalar product $((\cdot,\cdot))$ and norm $\|\cdot\|_V$ where, for $%
u,v\in H^1_0(\Omega)^3$, 
\begin{equation*}
((u,v)) = \sum_{i,j=1}^3 \int_\Omega \frac{\partial u_j}{\partial x_i}\frac{%
\partial v_j}{\partial x_i}. 
\end{equation*}

Consider the Leray projection $P\colon L^2\to H$ and $A= P\Delta \colon D(A)
\subset H\to H$ the Stokes operator (we have $D(A)\subset W^{2,2}(\Omega)^3
\cap H$, see \cite{Olga63} for instance). Since $A$ is closed and densely
defined ($A$ is, in fact, sectorial), we can consider the
interpolation-extrapolation scale generated by $(H,A)$, given by 
\begin{equation*}
\{(H_\alpha,A_\alpha)\colon \alpha \geqslant -1\}. 
\end{equation*}
Since $H$ is reflexive, thanks to \cite[Theorem V.1.5.12]{amann}, we have 
\begin{equation}\label{eq.dual}
H_{-\alpha}=(H_\alpha)^{\ast} \quad \hbox{ for } \alpha \in [0,1]. 
\end{equation}
In this way, we have found $A_{-1}$, with $D(A_{-1})=H$, 
\begin{equation}  \label{defA-1}
A_{-1}\colon H\subset H_{-1} \to H_{-1},
\end{equation}
satisfying $A_{-1}u\, (\phi) = {\displaystyle\int_\Omega} u A\phi$ for each $%
u\in H$ and $\phi\in D(A)$, and this operator is sectorial (see \cite%
{VonWhal}, for instance).

With this setting, we prove the following result concerning the local well
posedness of \eqref{NavierModified}, which will be proved at the end of
Section \ref{vwform_gmnse}.

\begin{theorem}
\label{Main1} Assume that $Pf\in H_{-\frac12}$. For each $N>0$, equation %
\eqref{NavierModified} generates a semigroup $S_N=\{S_N(t)\colon t\geqslant
0\}$ in $H$ with a global attractor $\mathcal{A_N}$. For each $u_0\in H$, if 
$u(t) = S_N(t)u_0$ for $t\geqslant 0$, we have 
%\begin{equation*}
%u \in C([0,\infty),H)\cap C((0,\infty),H_{\frac38})\cap C^1((0,\infty),H_{-\frac{1}{2}%
%}).
%\end{equation*}
\begin{equation*}
u \in C([0,\infty),H)\cap C((0,\infty),V)\cap C^1((0,\infty),H_{-\frac12}),
\end{equation*}
and $u$ is a classical solution of \eqref{NavierModified}. Moreover, for any $T>0$, $0<\eta<\frac{1}{8}$ and $p(\frac12+\eta)<1$, 
\begin{equation*}
\frac{du}{dt} \in L^p(0,T;H_{\eta-\frac{1}{2}}). 
\end{equation*}
In particular, for any $1\leqslant p<\frac{8}{5}$, $(0,T]\ni t \mapsto \frac{%
du}{dt}(t) \in H_{- \frac38}$ is locally H\"older continuous and is in $L^p%
\big(0,T;H_{-\frac38}\big)$.
\end{theorem}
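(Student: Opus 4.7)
The plan is to recast equation \eqref{NavierModified} as an abstract semilinear problem on the extrapolation space $H_{-\frac12}$, namely
\begin{equation*}
\frac{du}{dt} + \nu A_{-1}u = Pf - G_N(u), \qquad u(0) = u_0\in H,
\end{equation*}
where $G_N(u) := F_N(u)\,P\bigl[(u\cdot\nabla) u\bigr]$ is the modified convective term. Since $A_{-1}$ is sectorial and the extrapolation scale $\{H_\alpha\}$ satisfies the duality \eqref{eq.dual}, the program is to verify that $G_N$ is an $\epsilon$-regular nonlinearity between appropriate interpolation spaces, produce a local mild solution by contraction in a suitable weighted space of continuous functions, bootstrap its regularity using the smoothing of the analytic semigroup generated by $-\nu A_{-1}$, and finally extend globally using the dissipativity coming from the cutoff $F_N$.

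The heart of the matter is a sharp estimate for $G_N$. Identifying $V = H_{\frac12}$ and $H_{-\frac12} = V^{*}$ via \eqref{eq.dual}, the classical trilinear bound
\begin{equation*}
\Bigl|\int_\Omega (u\cdot\nabla)v\cdot w\Bigr| \leqslant C\|u\|_{L^4}\|v\|_V\|w\|_{L^4}
\end{equation*}
together with the Sobolev embedding $V\hookrightarrow L^4(\Omega)^3$ in three dimensions yields $\|P[(u\cdot\nabla)u]\|_{H_{-\frac12}}\leqslant C\|u\|_{L^4}\|u\|_V$. Because $F_N(u)\|u\|_{L^4}\leqslant N$ by the very definition of $f_N$ in \eqref{defFN}, and $f_N$ is globally Lipschitz on $[0,\infty)$, one deduces that $G_N\colon V\to H_{-\frac12}$ is Lipschitz continuous on bounded sets with
\begin{equation*}
\|G_N(u)-G_N(v)\|_{H_{-\frac12}} \leqslant C_N\,\|u-v\|_V\,(1+\|u\|_V+\|v\|_V).
\end{equation*}
This is precisely the structural hypothesis needed to apply the abstract semilinear theory in the scale generated by $(H,A)$, yielding $\epsilon$-regularity with $\epsilon$ chosen as small as needed.

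With this in hand, I would invoke an abstract local existence theorem in the spirit of Amann, or of the $\epsilon$-regularity framework used elsewhere by the authors, to obtain a unique mild solution $u\in C([0,\tau), H)\cap C((0,\tau), V)$ with $\frac{du}{dt}\in C((0,\tau), H_{-\frac12})$. Writing the variation-of-constants formula
\begin{equation*}
u(t) = e^{-\nu tA_{-1}}u_0 + \int_0^t e^{-\nu(t-s)A_{-1}}\bigl(Pf - G_N(u(s))\bigr)\,ds,
\end{equation*}
and differentiating in $t$, the analytic-semigroup bound $\|A_{-1}^\alpha e^{-\nu sA_{-1}}\|\leqslant Cs^{-\alpha}$ combined with the H\"older continuity of the orbit in $V$ and the Lipschitz property of $G_N$ yields the announced $L^p$ and pointwise H\"older regularity of $\frac{du}{dt}$ in $H_{\eta-\frac12}$. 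Global existence follows from testing the equation with $u$: since $b(u,u,u)=0$, one gets $\frac{d}{dt}\|u\|_H^2 + \nu\|u\|_V^2\leqslant C\|Pf\|_{H_{-\frac12}}^2$, whence a bounded absorbing set in $H$; the global attractor $\mathcal{A}_N$ then follows from the asymptotic compactness provided by the instantaneous regularization $H\to V$ and the standard theory of dissipative semigroups. The step I expect to be most delicate is the precise bookkeeping of the interpolation exponents behind the last assertion: the threshold $p(\frac12+\eta)<1$ is exactly the integrability condition for $s\mapsto s^{-(1/2+\eta)}$ near zero, and verifying that the fixed-point iteration does not introduce extraneous singularities at $t=0$ requires a careful choice of the weighted continuous function spaces used for the contraction.
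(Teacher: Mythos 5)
Your functional setting places the nonlinearity at the critical exponent, where the abstract machinery you invoke does not apply, and this is precisely the difficulty the paper's construction is designed to avoid. Your estimate $\|G_N(u)-G_N(v)\|_{H_{-\frac12}}\leqslant C_N\|u-v\|_V(1+\|u\|_V+\|v\|_V)$ costs one full derivative of $u$: in the scale generated by $(X^1,X^0)=(H,H_{-1})$ it corresponds to $X^{1+\epsilon}=V=H_{\frac12}$ and $X^{\gamma(\epsilon)}=H_{-\frac12}$, i.e.\ $\epsilon=\gamma(\epsilon)=\frac12$. The hypothesis of Theorem \ref{locexist} is $\rho\epsilon\leqslant\gamma(\epsilon)$ with $\rho>1$, which is violated, and the strict inequality $\gamma(\epsilon)>\rho\epsilon$ --- needed for the uniform existence time, the blow-up alternative, and the $C^1((0,\infty),H_{-\frac12})$ regularity asserted in the statement --- fails a fortiori. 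Saying ``$\epsilon$ chosen as small as needed'' does not repair this: your Lipschitz bound is stated in the $V$-norm and provides no estimate on $H_\epsilon$ for $\epsilon<\frac12$. (If instead you take $X^0=H_{-\frac12}$ and $X^1=V$ as base pair, you obtain a theory for $u_0\in V$, not for $u_0\in H$ as required.)

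The missing idea is the divergence-form rewriting of the convective term (Lemma \ref{strong-to-weak}): since $\mathrm{div}\,u=0$ one has $(u\cdot\nabla)u=\mathrm{div}(u\otimes u)$, so in the very weak formulation the derivative is moved onto the test function and the relevant bound becomes $\|F_N(u)\,u\otimes u-F_N(v)\,v\otimes v\|_{L^2}\leqslant 3N\|u-v\|_{L^4}$ (Lemma \ref{lemmaEstFN}). Combined with the embedding $H_{\frac38}\subset L^4$, this makes $G_N$ a globally Lipschitz $\frac38$-regular map from $H_{\frac38}$ into $H_{-\frac12}$ relative to $(H,H_{-1})$, i.e.\ $\epsilon=\frac38<\gamma(\epsilon)=\frac12$, strictly subcritical, and all conclusions of Theorems \ref{locexist} and \ref{theorem.Regular} become available. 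In particular the gap $\gamma(\epsilon)-\epsilon=\frac18$ is exactly the range $0<\eta<\frac18$ of extra regularity of $\frac{du}{dt}$ in the statement, and this part is obtained from the new regularity result Theorem \ref{theorem.Regular} rather than by ``differentiating the variation-of-constants formula'' as you suggest; with your critical placement there is no such gap and the claimed H\"older continuity of $\frac{du}{dt}$ with values in $H_{\eta-\frac12}$, $\eta>0$, is out of reach. Your global-existence and attractor arguments are in the right spirit (the paper closes global existence with a singular Gr\"onwall estimate in $H_{\frac38}$ using $\|G_N(u)\|_{H_{-\frac12}}\leqslant Nk\|u\|_{H_{\frac38}}$, then obtains the absorbing ball from the energy identity and compactness of $S_N(t)$), but they rest on a local theory that your setup does not deliver.
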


Now we introduce some terminology to be able to state our main result
concerning the existence of a weak global attractor for \eqref{eq:NS}. First
we define the set (candidate for a global attractor) $\mathcal{A}\subset H$
as follows 
\begin{equation*}
\mathcal{A} = \left\{ 
\begin{aligned}
 y\in H \colon \hbox{ there are sequences } & t_j\stackrel{j\to\infty}{\longrightarrow}\infty,\ \{u_0^j\}_{j\in \mathbb{N}} \subset B_0 \\ \hbox{ and } N_j\stackrel{j\to\infty}{\longrightarrow} \infty
&\text{ such that }S_{N_j}(t_j)u_0^j\to y\text{ weakly in } H.
\end{aligned}
\right\}, 
\end{equation*}
where $B_0$ is a suitable closed ball in $H$ (see \eqref{def.B0}).

Let $\mathcal{K}$ be the set of all weak solutions of \eqref{eq:NS} (we
recall its meaning in Definition \ref{def.weaksol}). We now define the
subset of weak solutions that will be of our interest.

\begin{definition}
\label{KN} We say that $u\in\mathcal{KN}$ if $u\in\mathcal{K}$ and one of
the following conditions holds:

\begin{enumerate}
\item $u(0)\in\mathcal{A}$ and there exist $t_j\overset{j\to\infty}{%
\longrightarrow}\infty$, $\{u_0^j\}_{j\in \mathbb{N}}\subset B_0$ and $N_j%
\overset{j\to\infty}{\longrightarrow} \infty$ such that for any $%
t_0\geqslant0$ and $[0,\infty)\ni s_j\overset{j\to\infty}{\longrightarrow}
t_0$ we have%
\begin{equation*}
S_{N_j}(s_j+t_j)u_0^j\overset{j\to\infty}{\longrightarrow} u(t_0)\text{
weakly in }{H}. 
\end{equation*}

\item $u(0)\in{H}\setminus\mathcal{A}$ and there exists $N_j\overset{%
j\to\infty}{\longrightarrow} \infty$ such that for any $t_0\geqslant0$ and $%
[0,\infty)\ni s_j\overset{j\to\infty}{\longrightarrow} t_0$ we have%
\begin{equation*}
S_{N_j}(s_j)u(0)\overset{j\to\infty}{\longrightarrow} u(t_0)\text{ weakly in 
}{H}. 
\end{equation*}
\end{enumerate}
\end{definition}

With this we prove the following result.

\begin{theorem}
\label{Attractor} Assume that $Pf\in H_{-\frac12}$. The set $\mathcal{A}$ is a weak global attractor for the
solutions in $\mathcal{KN}$, which means that:

\begin{enumerate}[label={\rm (\alph*)}]

\item \label{Attractor-a} $\mathcal{A}$ is weakly compact in $H$.

\item \label{Attractor-b} $\mathcal{A}$ is negatively invariant, that is,
for any $y\in\mathcal{A}$ and $t\geqslant0$ there exists $u \in\mathcal{KN} $
such that $u(0) \in\mathcal{A}$ and $u(t)=y$.

\item \label{Attractor-c} $\mathcal{A}$ is positively invariant, that is,
given $t\geqslant0$ we have 
\begin{equation*}
\{u(t)\colon u\in\mathcal{KN}, \ u(0)\in\mathcal{A}\}\subset\mathcal{A}. 
\end{equation*}

\item \label{Attractor-d} $\mathcal{A}$ weakly attracts $\mathcal{KN}$, that
is, for any bounded subset $B$ of $H$ we have 
\begin{equation*}
\sup_{u\in\mathcal{KN},\ u(0) \in B}dist_w (u(t),\mathcal{A}) \overset{%
t\to\infty}{\longrightarrow} 0, 
\end{equation*}
where $dist_w(\cdot,\cdot)$ is the Hausdorff semidistance defined using the
metric induced by the weak topology of $H$ in the ball $B_0$ (see %
\eqref{def.distw}).
\end{enumerate}
\end{theorem}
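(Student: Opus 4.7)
The plan rests on two ingredients. First, one fixes a ball $B_0\subset H$ that is absorbing uniformly in $N$ for the family $\{S_N\}_{N>0}$, so that $\mathcal{A}\subset B_0$, and recalls that $B_0$ endowed with the weak topology is compact and metrizable. Second, one needs a GMNSE-to-NSE compactness lemma: given $u_0^j\in B_0$ and $N_j\to\infty$, the orbits $[0,\infty)\ni s\mapsto S_{N_j}(s)u_0^j$ admit a subsequence that converges weakly in $H$ at every $s\geqslant 0$, the convergence being equicontinuous in $s$, with the limit lying in $\mathcal{K}$. This is proved by standard uniform energy estimates, namely $L^\infty(0,T;H)\cap L^2(0,T;V)$ bounds together with a uniform bound on the time derivative in a negative-order space (the truncation factor $F_{N_j}(u)\leqslant 1$ makes the convective term uniformly controlled, independent of $N_j$), followed by an Aubin--Lions-type argument.

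With these in place, parts \ref{Attractor-a} and \ref{Attractor-c} are short. For \ref{Attractor-a}, boundedness is automatic from $\mathcal{A}\subset B_0$, and weak closedness uses the metrizability of $B_0$: given $y_n\to y$ weakly with defining triples $(t_j^n,u_0^{j,n},N_j^n)$ for each $y_n$, pick a diagonal triple to realize $y\in\mathcal{A}$. For \ref{Attractor-c}, given $u\in\mathcal{KN}$ with $u(0)\in\mathcal{A}$ and $t\geqslant 0$, apply clause~(1) of Definition~\ref{KN} with $s_j\equiv t$ and $t_0=t$, obtaining $S_{N_j}(t+t_j)u_0^j\to u(t)$ weakly; the new times $t_j':=t+t_j\to\infty$ exhibit $u(t)\in\mathcal{A}$ by the very definition of $\mathcal{A}$.

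For \ref{Attractor-b}, fix $y\in\mathcal{A}$ with defining triples $(t_j,u_0^j,N_j)$ and $t\geqslant 0$. For $j$ large enough that $t_j\geqslant t$, consider the shifted orbits $v_j(s):=S_{N_j}(s+t_j-t)u_0^j$. Applying the compactness lemma along a subsequence yields a weak solution $u\in\mathcal{K}$ with $v_j(s_j)\to u(s_0)$ weakly for every $s_j\to s_0\geqslant 0$. Setting $s_0=0$ gives $u(0)\in\mathcal{A}$ (witnessed by $(t_j-t,u_0^j,N_j)$) and $s_0=t$ gives $u(t)=y$; the same witnessing sequences show $u\in\mathcal{KN}$ via clause~(1).

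For \ref{Attractor-d}, argue by contradiction. If uniform weak attraction fails, one obtains a bounded $B\subset H$, $\varepsilon>0$, $u^k\in\mathcal{KN}$ with $u^k(0)\in B$, and $t_k\to\infty$ with $dist_w(u^k(t_k),\mathcal{A})\geqslant\varepsilon$. By \ref{Attractor-c} we may assume $u^k(0)\notin\mathcal{A}$, so clause~(2) of Definition~\ref{KN} applies; diagonally pick $N^k\to\infty$ with $dist_w\bigl(S_{N^k}(t_k)u^k(0),u^k(t_k)\bigr)<\varepsilon/2$, hence $dist_w\bigl(S_{N^k}(t_k)u^k(0),\mathcal{A}\bigr)\geqslant\varepsilon/2$. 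For $t_k$ beyond the absorption time $T_B$ of $B$ into $B_0$, write $S_{N^k}(t_k)u^k(0)=S_{N^k}(t_k-T_B)\bigl(S_{N^k}(T_B)u^k(0)\bigr)$ with $w^k:=S_{N^k}(T_B)u^k(0)\in B_0$; since $t_k-T_B\to\infty$ and $N^k\to\infty$, every weak limit point of $S_{N^k}(t_k-T_B)w^k$ lies in $\mathcal{A}$ by definition, contradicting the lower bound via weak continuity of $dist_w(\cdot,\mathcal{A})$ on $B_0$. The hardest step will be the uniform-in-$N_j$ compactness-in-time extraction of paragraph one: upgrading pointwise-in-$t$ weak convergence to $t$-equicontinuous weak convergence is what makes Definition~\ref{KN} substantive and is needed both for the backward extraction in \ref{Attractor-b} and for the closeness estimate at a single fixed time in \ref{Attractor-d}.
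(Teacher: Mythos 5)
Your proposal is correct and follows essentially the same route as the paper: the key input is the paper's Lemmas \ref{Converg} and \ref{Converg2} (your ``compactness lemma'', which the paper imports from the GMNSE literature rather than reproving), and parts \ref{Attractor-a}--\ref{Attractor-d} are handled by the same subsequence/diagonal arguments, including factoring through the uniformly absorbing ball $B_0$ in \ref{Attractor-d}. The only real difference is cosmetic: in \ref{Attractor-d} you dispose of initial data lying in $\mathcal{A}$ by invoking positive invariance, whereas the paper runs a separate diagonalization for that case.
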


Finally, we obtain that the global attractors for the globally modified
Navier-Stokes equations behave upper semicontinuously with respect the weak
global attractor $\mathcal{A}$.

The paper is organized as follows: in Section \ref{lecmaps} we introduce the
notion of $\epsilon$-regular maps and a local existence result for
semilinear parabolic problems with nonlinearities being $\epsilon-$regular
maps. In addition, we present a new result (see Theorem \ref{theorem.Regular}%
) regarding the regularity of solutions. In Section \ref{vwform_gmnse} we
prove that the nonlinearities associated to the GMNSE are $\frac12$-regular
maps, and obtain the local existence and regularity results for the very
weak formulation of \eqref{NavierModified}, i.e. Theorem \ref{Main1} will be
proved. Finally, in Section \ref{weak-ga}, we obtain the result on existence
of the weak global attractor, prove Theorem \ref{Attractor} and obtain the
upper semicontinuity of the attractors (see Proposition \ref{prop.Cont}).

\section{Local existence and regularity results for abstract semilinear
parabolic problems}

\label{lecmaps}

In what follows we will introduce some notation and terminology in order to
state the main results of \cite[Section 2]{AC} that will be used to obtain
the global well posedness, existence of the global attractor and its
regularity for \eqref{NavierModified}, and also present new regularity
results (Lemma \ref{lemma.Regular} and Theorem \ref{theorem.Regular}).

Consider $-A\colon D(A)\subset X\to X$ a positive sectorial operator, $%
X^{\alpha}=(D(A^{\alpha}),\|\cdot\|_{X^{\alpha}})$, $\alpha\geqslant0$ the
scale of fractional power spaces associated with $A$, where $%
\|x\|_{X^{\alpha}} = \|A^{\alpha}x\|_{X}$ for $x\in X^{\alpha}$, and $%
\{e^{At}\colon t \geqslant0\}$ the analytic semigroup generated by $A$.
Without loss of generality, we can assume that there exists a constant $%
M\geqslant1$ such that 
\begin{equation*}
\|e^{At}x\|_{X^{\alpha}} \leqslant M\|x\|_{X^{\alpha}} \quad\hbox{ for }
t\geqslant0, \ x\in X^{\alpha}\hbox{ and } \alpha\geqslant0, 
\end{equation*}
and 
\begin{equation*}
\|e^{At}x\|_{X^{\alpha}} \leqslant Mt^{-\alpha}\|x\|_{X} \quad\hbox{ for }
t>0, \ x\in X \hbox{ and } \alpha\geqslant0. 
\end{equation*}
With this, we can see that, for $0\leqslant\alpha\leqslant\beta$ we have 
\begin{equation}  \label{est.e}
\|e^{At}x\|_{X^{\beta}} \leqslant Mt^{\alpha- \beta}\|x\|_{X^{\alpha}} \quad%
\hbox{ for all } t>0 \hbox{ and } x\in X^{\alpha}.
\end{equation}

Given $\epsilon$, $\rho$ and $\gamma(\epsilon)$ positive constants, with $%
\rho>1$ and $\rho\epsilon\leqslant\gamma(\epsilon)<1$, we define $\mathcal{F}%
(\epsilon,\rho,\gamma(\epsilon))$ as the family of functions $f\colon
X^{1+\epsilon}\to X^{\gamma(\epsilon)}$, such that, for all $x,y\in
X^{1+\epsilon}$, satisfy 
\begin{equation}  \label{epslips}
\|f(x)-f(y)\|_{X^{\gamma(\epsilon)}} \leqslant c \|x-y\|_{X^{1+\epsilon}}
(\|x\|^{\rho-1}_{X^{1+\epsilon}}+ \|y\|^{\rho -1}_{X^{1+\epsilon}}+1)
\end{equation}
and 
\begin{equation}  \label{epsbound}
\|f(x)\|_{X^{\gamma(\epsilon)}} \leqslant c (\|x\|^{\rho
}_{X^{1+\epsilon}}+1),
\end{equation}
for some constant $c>0$. A function $f\in \mathcal{F}(\epsilon,\rho,\gamma(%
\epsilon))$ is called an \textbf{$\epsilon$-regular map} relative to the
pair $(X^1,X^0)$.

We consider the local well posedness for the abstract parabolic problem 
\begin{equation}  \label{abseqo}
\left\{ \begin{aligned} &\dot x=Ax+f(x), \ t>0\\ &x(0)=x_0, \end{aligned} %
\right.
\end{equation}
when $f\in \mathcal{F}(\epsilon,\rho,\gamma(\epsilon))$. We say that a
function $x\colon[0,\tau]\to X^{1}$ is an \textbf{$\epsilon$-regular mild
solution} of \eqref{abseqo} in $[0,\tau]$ if $x\in C([0,\tau],X^1)\cap
C((0,\tau],X^{1+\epsilon})$ and 
\begin{equation}  \label{eq.VCF}
x(t) = e^{At}x_0 + \int_0^{t} e^{A(t-s)}f(x(s))ds \quad\hbox{ for each } t\in%
[0,\tau].
\end{equation}

With these definitions, we have the following:

\begin{theorem}[{See {\protect\cite[Section 2]{AC}}}]
\label{locexist} Let $f\in \mathcal{F}(\epsilon,\rho,\gamma(\epsilon))$.
Given $y_0\in X^1$, there exist $r>0$ and $\tau_0>0$ such that for any $%
x_0\in B_{X^1}(y_0,r)$ there exists a continuous function $x(\cdot,x_0)\colon%
[0,\tau_0]\to X^1$, which is the unique $\epsilon$-regular mild solution of %
\eqref{abseqo}. This solution satisfies 
\begin{equation*}
x\in C((0,\tau_0],X^{1+\theta}) \quad \hbox{ for } \quad 0\leqslant\theta<
\gamma(\epsilon), 
\end{equation*}
and 
\begin{equation*}
t^{\theta}\|x(t,x_0)\|_{X^{1+\theta}}\xrightarrow{t \to 0^+} 0 \quad \hbox{
for } \quad 0<\theta< \gamma(\epsilon). 
\end{equation*}
Moreover, given $0\leqslant\theta_0 < \gamma(\epsilon)$, there exists $C>0$
such that if $x_0, z_0\in B_{X^1}(y_0,r)$ then 
\begin{equation*}
t^{\theta}\|x(t,x_0) - x(t,z_0)\|_{X^{1+\theta}} \leqslant
C\|x_0-z_0\|_{X^1}, 
\end{equation*}
for all $t\in[0,\tau_0]$ and $0\leqslant\theta\leqslant\theta_0$. %If the
%maximal time of existence for $x(t,x_0)$ is $\tau_m$, then either
%$\tau_m=\infty$ or $\lim\limits_{t\to\tau_m^-}\|x(t,x_0)\|_{X^{1+\epsilon}}=\infty$. 

Also, %\begin{itemize}
%\item 
if $\gamma(\epsilon)>\rho\epsilon$, the time of existence is uniform on
bounded subsets of $X^1$ and, furthermore, 
\begin{equation*}
x\in C((0,\tau_0], X^{1+\gamma(\epsilon)})\cap C^1((0,\tau_0],
X^{\gamma(\epsilon)}), 
\end{equation*}
$x$ verifies \eqref{abseqo} and, if $\tau_m$ is the maximal time of
existence for $x(t,x_0)$, either $\tau_m=\infty$ or $\lim\limits_{t\to%
\tau_m^-}\|x(t,x_0)\|_{X^1}=\infty$. 
%\item If $f$ is an $\epsilon-$regular map for all $\epsilon\in(\epsilon
%_0,\epsilon_{1}]$ then if we denote by $x_{\epsilon}(t,x_0)$ the unique
%$\epsilon-$regular solution starting at $x_0$, for $\epsilon\in(\epsilon
%_0,\epsilon_{1}]$, then $x_{\epsilon}=x_{\epsilon_{1}}$ and $x\in
%C((0,\tau],X^{1+\gamma(\epsilon_{1})})$.
%\end{itemize}
%The constants depend on
%$\tau_0=\tau_0(y_0,A,\epsilon,\rho,\gamma(\epsilon),c,M)$,
%$r=r(y_0,\epsilon,\rho,\gamma(\epsilon),c,M)$,
%$C=C(\theta_0,\epsilon,\rho,\gamma(\epsilon),M)$.
\end{theorem}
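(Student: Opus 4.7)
The plan is a Banach fixed-point argument applied to the variation of constants formula \eqref{eq.VCF}. The natural candidate space for the iteration is
\begin{equation*}
\mathcal{Y}_{K,\tau_0} = \Bigl\{ x\in C([0,\tau_0],X^1) : x(0)=x_0,\ \sup_{0<t\leqslant\tau_0} t^{\epsilon}\|x(t)\|_{X^{1+\epsilon}}\leqslant K,\ t^{\epsilon}\|x(t)\|_{X^{1+\epsilon}}\xrightarrow{t\to 0^+}0\Bigr\},
\end{equation*}
endowed with the complete metric $d(x,z)=\sup_{0<t\leqslant\tau_0}t^{\epsilon}\|x(t)-z(t)\|_{X^{1+\epsilon}}+\sup_{0\leqslant t\leqslant\tau_0}\|x(t)-z(t)\|_{X^{1}}$. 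Define $\Phi$ by $(\Phi x)(t)=e^{At}x_0+\int_0^t e^{A(t-s)}f(x(s))\,ds$. The first step is to show that $e^{At}x_0\in\mathcal{Y}_{K,\tau_0}$ for suitable $K$ and $\tau_0$; this uses \eqref{est.e} together with the fact that $t^{\epsilon}\|e^{At}x_0\|_{X^{1+\epsilon}}\to 0$ as $t\to 0^+$ whenever $x_0\in X^1$, which in turn follows from a standard density argument approximating $x_0$ by elements of $X^{1+\epsilon}$.

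The second step is to estimate the nonlinear term. Using \eqref{est.e} with exponent $\gamma(\epsilon)$ in the source space and $1+\epsilon$ in the target, together with \eqref{epsbound}, one obtains
\begin{equation*}
\Bigl\|\int_0^t e^{A(t-s)}f(x(s))\,ds\Bigr\|_{X^{1+\epsilon}}\leqslant cM\int_0^t (t-s)^{\gamma(\epsilon)-1-\epsilon}\bigl(\|x(s)\|_{X^{1+\epsilon}}^{\rho}+1\bigr)\,ds.
\end{equation*}
Substituting $\|x(s)\|_{X^{1+\epsilon}}\leqslant Ks^{-\epsilon}$ and evaluating the Beta integral $\int_0^t(t-s)^{\gamma(\epsilon)-1-\epsilon}s^{-\rho\epsilon}\,ds$, which converges because $\gamma(\epsilon)>\epsilon$ and $\rho\epsilon<1$, yields a bound of the form $C\,t^{\gamma(\epsilon)-\rho\epsilon}(K^{\rho}+1)$ after multiplication by $t^{\epsilon}$. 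Since $\gamma(\epsilon)\geqslant\rho\epsilon$, this is controlled; when $\gamma(\epsilon)>\rho\epsilon$ the bound vanishes as $\tau_0\to 0$, so choosing $\tau_0$ small and $K$ slightly larger than $2M\|x_0\|_{X^1}$ (plus a harmless constant) forces $\Phi(\mathcal{Y}_{K,\tau_0})\subset\mathcal{Y}_{K,\tau_0}$. An entirely parallel computation using \eqref{epslips} gives
\begin{equation*}
t^{\epsilon}\|(\Phi x)(t)-(\Phi z)(t)\|_{X^{1+\epsilon}}\leqslant C\,\tau_0^{\gamma(\epsilon)-\rho\epsilon}(K^{\rho-1}+1)\,d(x,z),
\end{equation*}
so shrinking $\tau_0$ produces a contraction. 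Banach's theorem then yields the unique $\epsilon$-regular mild solution, and the same estimates performed with $x_0,z_0\in B_{X^1}(y_0,r)$ give the continuous dependence with weight $t^{\theta}$ for $0\leqslant\theta\leqslant\theta_0<\gamma(\epsilon)$ after interpolating between the $X^1$ and $X^{1+\epsilon}$ bounds.

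For the improved regularity when $\gamma(\epsilon)>\rho\epsilon$, the key observation is that the estimates above depend on $\|x_0\|_{X^1}$ only through the size of a ball, so $\tau_0$ can be taken uniform on bounded sets of $X^1$; iterating the solution map produces the maximal interval $[0,\tau_m)$ together with the blow-up alternative, since a finite $\tau_m$ with $\limsup_{t\to\tau_m^-}\|x(t)\|_{X^1}<\infty$ would allow the local theorem to extend the solution past $\tau_m$. To promote $x$ from mild to classical and obtain $x\in C((0,\tau_0],X^{1+\gamma(\epsilon)})\cap C^1((0,\tau_0],X^{\gamma(\epsilon)})$, one bootstraps: once $x(t)\in X^{1+\epsilon}$ for $t>0$, the composition $s\mapsto f(x(s))$ is locally Hölder continuous into $X^{\gamma(\epsilon)}$ by \eqref{epslips} and the continuity of $x$ in $X^{1+\epsilon}$, and then classical linear parabolic regularity for $\dot y=Ay+g(t)$ with Hölder $g$ (applied on subintervals $[\sigma,\tau_0]$) yields the stated differentiability and the identity $\dot x=Ax+f(x)$.

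The main obstacle is the borderline case $\gamma(\epsilon)=\rho\epsilon$, where the factor $t^{\gamma(\epsilon)-\rho\epsilon}$ is constant and one cannot simply shrink $\tau_0$ to get the contraction; here one must exploit the fact that elements of $\mathcal{Y}_{K,\tau_0}$ satisfy $t^{\epsilon}\|x(t)\|_{X^{1+\epsilon}}\to 0$ as $t\to 0^+$, so that inside the Beta integral $s^{\rho\epsilon}\|x(s)\|_{X^{1+\epsilon}}^{\rho}$ is bounded by a quantity that becomes arbitrarily small after further restricting $\tau_0$, recovering the contraction via dominated convergence. This is precisely why the definition of $\mathcal{Y}_{K,\tau_0}$ must include the vanishing condition at the origin rather than only a pointwise weighted bound.
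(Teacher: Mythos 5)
The paper does not prove this theorem itself; it quotes it from Arrieta--Carvalho \cite[Section 2]{AC}, and your plan is essentially their argument: a contraction in a weighted space carrying the vanishing condition $t^{\epsilon}\|x(t)\|_{X^{1+\epsilon}}\to 0$, the Beta-function computation $\int_0^t(t-s)^{\gamma(\epsilon)-1-\epsilon}s^{-\rho\epsilon}\,ds=\mathcal{B}(\gamma(\epsilon)-\epsilon,1-\rho\epsilon)\,t^{\gamma(\epsilon)-\epsilon-\rho\epsilon}$ (convergent since $\gamma(\epsilon)\geqslant\rho\epsilon>\epsilon$ and $\rho\epsilon<1$), and the observation that in the critical case $\gamma(\epsilon)=\rho\epsilon$ the contraction constant is made small not by shrinking $\tau_0$ alone but by the smallness of the weighted norm near $t=0$ (equivalently by taking $K$ small and $r$ small, splitting $e^{At}x_0=e^{At}y_0+e^{At}(x_0-y_0)$ with $t^{\epsilon}\|e^{At}(x_0-y_0)\|_{X^{1+\epsilon}}\leqslant Mr$). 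This is exactly why the statement is local around $y_0$ rather than uniform on bounded sets in the critical case, and your last paragraph identifies the right mechanism.

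Two steps need patching. First, the continuous-dependence estimate for $\theta\in(\epsilon,\theta_0]$ --- a range that is nonempty since $\theta_0$ may be anywhere below $\gamma(\epsilon)$ and $\gamma(\epsilon)\geqslant\rho\epsilon>\epsilon$ --- cannot be obtained ``by interpolating between the $X^1$ and $X^{1+\epsilon}$ bounds'': interpolation only reaches $\theta\leqslant\epsilon$. You must run the Duhamel estimate directly in $X^{1+\theta}$, using $\|e^{A(t-s)}w\|_{X^{1+\theta}}\leqslant M(t-s)^{\gamma(\epsilon)-1-\theta}\|w\|_{X^{\gamma(\epsilon)}}$, integrable precisely because $\theta<\gamma(\epsilon)$; the same remark applies to proving $x\in C((0,\tau_0],X^{1+\theta})$ for $\theta\in(\epsilon,\gamma(\epsilon))$. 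Second, in the bootstrap to a classical solution you apply linear parabolic regularity to $\dot y=Ay+g(t)$ with $g=f(x(\cdot))$ declared ``locally H\"older by \eqref{epslips} and the continuity of $x$ in $X^{1+\epsilon}$''; but \eqref{epslips} only converts H\"older continuity of $x$ into $X^{1+\epsilon}$ into H\"older continuity of $g$ into $X^{\gamma(\epsilon)}$, and mere continuity of $x$ gives nothing. You first need local H\"older continuity of $t\mapsto x(t)\in X^{1+\epsilon}$ on compact subsets of $(0,\tau_0]$, obtained from the decomposition $x(t+h)-x(t)=(e^{Ah}-I)e^{A(t-\sigma)}x(\sigma)+\cdots$ together with the singular Gr\"onwall inequality --- this is precisely the computation the present paper carries out in the proof of Theorem \ref{theorem.Regular}. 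With these two repairs the argument is complete and matches the cited source.
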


\begin{remark}
We stress that \textit{it is not assumed that $f$ is defined on $X^1$}. The
only requirement on $f$ is that it is an $\epsilon$-regular map relative to
the pair $(X^1,X^0)$ for some $\epsilon>0$. Hence, local well posedness in $%
X^1$ is obtained without requiring that the nonlinearity $f$ is defined on $%
X^1$.
\end{remark}

\subsection{Regularity results}

Continuing the work done in \cite{AC}, we present a new result regarding the
regularity of $\epsilon$-regular mild solutions of \eqref{abseqo}. We begin
with a set of technical lemmas.

\begin{lemma}
\label{lemma.Aux.1} If $x\colon [0,\tau_0]\to X^1$ is an $\epsilon$-regular
mild solution of \eqref{abseqo} then for each $0\leqslant \theta <
\gamma(\epsilon)$ there exists a constant $C\geqslant 0$ such that 
\begin{equation*}
\|x(t)\|_{X^{1+\theta}}\leqslant Ct^{-\theta} \quad\hbox{ for all }
t\in(0,\tau_0]. 
\end{equation*}
\end{lemma}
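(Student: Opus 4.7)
The plan is to obtain the estimate as essentially a direct consequence of Theorem \ref{locexist}, without having to revisit the variation of constants formula. Theorem \ref{locexist} already supplies both the regularity $x \in C((0,\tau_0], X^{1+\theta})$ for $0 \leqslant \theta < \gamma(\epsilon)$ and the decay $t^\theta \|x(t)\|_{X^{1+\theta}} \to 0$ as $t \to 0^+$ for $0 < \theta < \gamma(\epsilon)$. Boundedness of $t^\theta \|x(t)\|_{X^{1+\theta}}$ on $(0,\tau_0]$ is then a purely continuity-theoretic consequence on a compact interval.

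Concretely, I would split the argument into two cases. For $\theta = 0$ the desired bound is just the statement that $\|x(t)\|_{X^1}$ is bounded on $[0, \tau_0]$; this is immediate from $x \in C([0, \tau_0], X^1)$ and compactness of the interval. For $0 < \theta < \gamma(\epsilon)$ I would introduce the auxiliary function $g\colon [0, \tau_0] \to [0, \infty)$ defined by $g(t) = t^\theta \|x(t)\|_{X^{1+\theta}}$ for $t \in (0, \tau_0]$ and $g(0) = 0$. The continuity $x \in C((0, \tau_0], X^{1+\theta})$ makes $g$ continuous on $(0, \tau_0]$, while Theorem \ref{locexist} gives $\lim_{t \to 0^+} g(t) = 0 = g(0)$. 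Thus $g$ is continuous on the compact interval $[0, \tau_0]$, hence attains a maximum; taking $C$ to be this maximum yields $\|x(t)\|_{X^{1+\theta}} \leqslant C t^{-\theta}$ for all $t \in (0, \tau_0]$.

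There is no real obstacle: the lemma is being invoked precisely to record a convenient reformulation of the convergence statement already established in Theorem \ref{locexist}, presumably to serve as a starting point for the bootstrap argument in subsequent lemmas culminating in Theorem \ref{theorem.Regular}. If at any point a more quantitative dependence of the constant $C$ on $\|x_0\|_{X^1}$ were needed, one could instead plug \eqref{est.e}, \eqref{epsbound}, and the bound $\|x(s)\|_{X^{1+\epsilon}} \leqslant K s^{-\epsilon}$ (itself the $\theta = \epsilon$ case of the lemma) into \eqref{eq.VCF} and estimate the resulting weakly singular integral by a standard Beta function computation, using that $\gamma(\epsilon) - \theta > 0$ and $\rho\epsilon \leqslant \gamma(\epsilon) < 1$ so that all exponents lie in the integrable range.
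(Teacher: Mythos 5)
Your proof is correct, but it takes a softer route than the paper. You deduce the bound purely from the qualitative statements in Theorem \ref{locexist} (continuity of $x$ into $X^{1+\theta}$ on $(0,\tau_0]$ together with $t^{\theta}\|x(t)\|_{X^{1+\theta}}\to 0$ as $t\to 0^+$), so that $t\mapsto t^{\theta}\|x(t)\|_{X^{1+\theta}}$ extends continuously to the compact interval $[0,\tau_0]$ and is therefore bounded; the $\theta=0$ case is just continuity into $X^1$. The paper instead uses Theorem \ref{locexist} only to get $\sup_{s\in(0,\tau_0]}s^{\epsilon}\|x(s)\|_{X^{1+\epsilon}}<\infty$ and then returns to the variation of constants formula \eqref{eq.VCF}, bounding the linear part by $Mt^{-\theta}\|x_0\|_{X^1}$ via \eqref{est.e} and the Duhamel integral by $C_1t^{-\theta}$ via \cite[Lemma 2]{AC} --- exactly the quantitative alternative you sketch in your closing paragraph. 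What the paper's version buys is an explicit constant $C=M\|x_0\|_{X^1}+C_1$ whose dependence on the data is visible; what yours buys is brevity, at the cost of relying on the full strength of the regularity and decay assertions of Theorem \ref{locexist} for every $\theta<\gamma(\epsilon)$ rather than only for $\theta=\epsilon$. Both are legitimate, since the uniqueness clause of Theorem \ref{locexist} identifies any $\epsilon$-regular mild solution with the one constructed there, an identification the paper's proof also makes implicitly.
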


\begin{proof}
Since $x$ is an $\epsilon$-regular mild solution of \eqref{abseqo}, from
Theorem \ref{locexist} we obtain 
\begin{equation*}
\sup_{s\in (0,\tau_0]} s^\epsilon \|x(s)\|_{X^{1+\epsilon}} < \infty. 
\end{equation*}
Hence, using \cite[Lemma 2]{AC}, we have 
\begin{equation*}
t^\theta \left\| \int_0^t e^{A(t-s)}f(x(s))ds \right\|_{X^{1+\theta}}
\leqslant C_1, 
\end{equation*}
for all $t\in (0,\tau_0]$ and some constant $C_1\geqslant 0$. Thus, using %
\eqref{eq.VCF} and \eqref{est.e}, we have 
\begin{equation*}
\|x(t)\|_{X^{1+\theta}} \leqslant\|e^{At}x_0\|_{X^{1+\theta}}+ \left\|
\int_0^t e^{A(t-s)}f(x(s))ds\right\|_{X^{1+\theta}} \leqslant
Mt^{-\theta}\|x_0\|_{X^1}+C_1 t^{-\theta}, 
\end{equation*}
and the estimate follows, taking $C=M\|x_0\|_{X^1} + C_1$.
\end{proof}

\begin{lemma}
\label{lemma.Aux.1.1} Fix $0<T<\infty$ and let $g\colon (0,T] \to
X^{\gamma(\epsilon)}$ be locally H\"older continuous with $\int_0^r
\|g(s)\|_{X^{\gamma(\epsilon)}} ds < \infty$ for some $r>0$. For $0\leqslant
t \leqslant T$, define 
\begin{equation}  \label{def.G}
G(t) = \int_0^t e^{A(t-s)}g(s)ds.
\end{equation}
Then $G$ is continuous on $[0,T]$, continuously differentiable on $(0,T)$,
with $G(t)\in D(A)$ for $t\in (0,T)$ and 
\begin{equation*}
\frac{dG}{dt}(t) = AG(t) + g(t) \quad \hbox { for } t\in (0,T), 
\end{equation*}
with $G(t)\xrightarrow{t\to 0^+} 0$ in $X^{\gamma(\epsilon)}$.
\end{lemma}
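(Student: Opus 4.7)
The plan is to adapt the classical argument (as in Pazy's treatment of mild solutions with H\"older continuous forcing) to the present scale. First I would address the boundary behaviour: using $\|e^{A(t-s)}g(s)\|_{X^{\gamma(\epsilon)}} \leqslant M\|g(s)\|_{X^{\gamma(\epsilon)}}$, the hypothesis $\int_0^r \|g(s)\|_{X^{\gamma(\epsilon)}}\,ds<\infty$ and dominated convergence give $\|G(t)\|_{X^{\gamma(\epsilon)}}\leqslant M\int_0^t\|g(s)\|_{X^{\gamma(\epsilon)}}\,ds\to 0$ as $t\to 0^+$. Continuity on $[0,T]$ follows by splitting $G(t+h)-G(t)$ into a ``time-shift'' piece $\int_0^t(e^{A(t+h-s)}-e^{A(t-s)})g(s)\,ds$, handled via strong continuity of $\{e^{At}\}$ in $X^{\gamma(\epsilon)}$ together with the uniform bound above, and a ``new-interval'' piece $\int_t^{t+h}e^{A(t+h-s)}g(s)\,ds$, which is $O(h)$ away from $0$ by local boundedness of $g$ and tends to $0$ near $0$ by the integrability hypothesis.

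For the differentiability and membership in $D(A)$, fix $t\in(0,T)$ and write the standard decomposition
\begin{equation*}
G(t) = \int_0^t e^{A(t-s)}\bigl(g(s)-g(t)\bigr)\,ds + \int_0^t e^{A(t-s)}g(t)\,ds =: G_1(t)+G_2(t).
\end{equation*}
For $G_2(t)$, the elementary identity $\int_0^t e^{A(t-s)}\,ds=A^{-1}(e^{At}-I)$ (valid because $-A$ is positive sectorial, so $A$ is invertible up to a shift that is easily absorbed) shows $G_2(t)\in D(A)$ with $AG_2(t)=e^{At}g(t)-g(t)$. For $G_1(t)$, I would exploit the local H\"older continuity of $g$: on a compact interval around $t$ there exist $\alpha>0$ and $L>0$ with $\|g(s)-g(t)\|_{X^{\gamma(\epsilon)}}\leqslant L|s-t|^{\alpha}$. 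Combined with the estimate $\|A e^{A(t-s)}\|_{\mathcal{L}(X^{\gamma(\epsilon)},X)}\leqslant C(t-s)^{-(1-\gamma(\epsilon))}$ (from \eqref{est.e}), the integrand $A e^{A(t-s)}(g(s)-g(t))$ is dominated by $CL(t-s)^{\alpha-1+\gamma(\epsilon)}$ near $s=t$, which is integrable, and by the integrability hypothesis together with a uniform bound near $s=0$. Closedness of $A$ then yields $G_1(t)\in D(A)$ with $AG_1(t)=\int_0^t A e^{A(t-s)}(g(s)-g(t))\,ds$, so in total $G(t)\in D(A)$ and $AG(t)$ is explicitly represented.

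To pass from this to $\frac{dG}{dt}(t)=AG(t)+g(t)$, I would use the splitting
\begin{equation*}
G(t+h)-G(t) = (e^{Ah}-I)G(t) + \int_t^{t+h}e^{A(t+h-s)}g(s)\,ds.
\end{equation*}
Dividing by $h$, the first term converges to $AG(t)$ as $h\to 0$ precisely because $G(t)\in D(A)$, while the second tends to $g(t)$ by local continuity of $g$ and strong continuity of the semigroup. The same computation works for $h<0$ once one observes that $g$ is locally bounded around $t$. Continuous differentiability on $(0,T)$ then reduces to showing that $t\mapsto AG(t)$ is continuous; this follows from continuity of $t\mapsto g(t)$ in $X^{\gamma(\epsilon)}$, the formula for $AG_2$, and a dominated convergence argument applied to $AG_1$ using the same integrable majorant.

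The main obstacle, and the only place real care is needed, is establishing $G(t)\in D(A)$ via the H\"older-controlled piece $G_1(t)$: one must verify that the singularity $(t-s)^{-(1-\gamma(\epsilon))}$ coming from the smoothing of the semigroup into $D(A)$ is genuinely tamed by the H\"older exponent of $g$, and pair this with closedness of $A$ to move $A$ inside the integral. Everything else is a bookkeeping exercise built on \eqref{est.e} and the dominated convergence theorem.
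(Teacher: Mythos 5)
Your argument is correct and is essentially the paper's own proof: the paper disposes of this lemma in one line by citing Lemma 3.2.1 of Henry's book (applied with $X^{\gamma(\epsilon)}$ in place of $X$), and the decomposition you carry out --- splitting off $G_2(t)=\int_0^t e^{A(t-s)}g(t)\,ds$, taming the singularity in $G_1$ by the H\"older modulus of $g$ to put $G(t)$ in $D(A)$ via closedness of $A$, and then differentiating through $(e^{Ah}-I)G(t)+\int_t^{t+h}e^{A(t+h-s)}g(s)\,ds$ --- is precisely the proof of that cited lemma. The only cosmetic remark is that your operator-norm bound for $Ae^{A(t-s)}$ is stated into $X$ rather than into $X^{\gamma(\epsilon)}$; using the bound $M(t-s)^{-1}$ in $\mathcal{L}(X^{\gamma(\epsilon)})$ instead gives the derivative in $X^{\gamma(\epsilon)}$, which is the form the paper uses later.
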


\begin{proof}
Apply \cite[Lemma 3.2.1]{Henry} with $g$ and $X^{\gamma(\epsilon)}$ in place
of $f$ and $X$.
\end{proof}

\begin{lemma}
\label{lemma.Regular} Fix $0<T<\infty$, and assume that $g\colon(0,T]\to
X^{\gamma(\epsilon)}$ satisfies 
\begin{equation*}
\| g(\theta)-g(r)\|_{X^{\gamma(\epsilon)}} \leqslant K(r)|\theta-r|^{\delta}
\quad \hbox{ for all } \theta,r\in (0,T], 
\end{equation*}
where $\delta>0$ is a fixed constant and $K\colon(0,T]\to[0,\infty)$ is
continuous, and integrable in $(0,T)$. Then the function $G$ defined in %
\eqref{def.G} is continuously differentiable from $(0,T)$ into $%
X^{\gamma(\epsilon)+\mu}$ for any $0\leqslant\mu < \delta$, and 
\begin{equation}  \label{eq.required1}
\left\|\frac{dG}{dt}(t)\right\|_{X^{\gamma(\epsilon)+\mu}} \leqslant
Mt^{-\mu}\|g(t)\|_{X^{\gamma(\epsilon)}}+M\int_0^{t} K(s)(t-s)^{-\mu
+\delta-1}ds \quad \hbox{ for } t\in (0,T),
\end{equation}
with $M$ independent from $\mu$ and $g$. Furthermore 
\begin{equation*}
(0,T) \ni t \mapsto\frac{dG}{dt}(t) \in X^{\gamma(\epsilon)+\mu} 
\end{equation*}
is locally H\"older continuous, provided $\int_0^{r} K(s)ds = O(r^{\varphi })
$ as $r\to 0^+$, for some $\varphi>0$.
\end{lemma}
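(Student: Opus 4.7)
The starting point is the representation $G'(t) = AG(t) + g(t)$ from Lemma \ref{lemma.Aux.1.1}, which a priori only places $G'$ in $X$. To upgrade the range to $X^{\gamma(\epsilon)+\mu}$, I would use the standard subtraction trick: write
\[
AG(t) = \int_0^t A e^{A(t-s)}[g(s) - g(t)]\, ds + A\int_0^t e^{A(t-s)} g(t)\, ds,
\]
and evaluate the second integral via $A e^{A(t-s)} = -\partial_s e^{A(t-s)}$ to get $A\int_0^t e^{A(t-s)} g(t)\, ds = (e^{At} - I) g(t)$. Adding $g(t)$ to both sides yields the key identity
\[
G'(t) = e^{At} g(t) + \int_0^t A e^{A(t-s)}[g(s) - g(t)]\, ds,
\]
in which the singularity of $A e^{A(t-s)}$ at $s = t$ is tamed by the H\"older vanishing of $g(s) - g(t)$.

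For the estimate \eqref{eq.required1}, \eqref{est.e} gives $\|e^{At} g(t)\|_{X^{\gamma(\epsilon)+\mu}} \leqslant M t^{-\mu} \|g(t)\|_{X^{\gamma(\epsilon)}}$, while the sectorial estimate $\|A e^{A\tau}\|_{\mathcal{L}(X^{\gamma(\epsilon)}, X^{\gamma(\epsilon)+\mu})} \leqslant M \tau^{-1-\mu}$, combined with the H\"older hypothesis $\|g(s) - g(t)\|_{X^{\gamma(\epsilon)}} \leqslant K(s)(t-s)^{\delta}$, produces the integrand bound $M K(s)(t-s)^{\delta - 1 - \mu}$; integrability near $s = t$ uses $\mu < \delta$, and near $s = 0$ uses $K\in L^1(0,T)$. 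Interchanging $A$ with the integral is justified by the integrability of the integrand in $X^{\gamma(\epsilon)+\mu}$ together with the closedness of fractional powers of $A$. Continuity of $G' \colon (0,T) \to X^{\gamma(\epsilon)+\mu}$ then follows from continuity of $g$, strong continuity of $\{e^{At}\}$, and dominated convergence for the integral term with the locally uniform majorant $M K(s)(t-s)^{\delta-1-\mu}$.

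For the local H\"older continuity of $G'$, I would estimate $\|G'(t+h) - G'(t)\|_{X^{\gamma(\epsilon)+\mu}}$ using the same decomposition. The boundary term is handled via $e^{A(t+h)}g(t+h) - e^{At}g(t) = [e^{A(t+h)} - e^{At}]g(t+h) + e^{At}[g(t+h) - g(t)]$, with $\|e^{A(t+h)} - e^{At}\|_{\mathcal{L}(X^{\gamma(\epsilon)}, X^{\gamma(\epsilon)+\mu})} \leqslant C h t^{-1-\mu}$ (from integrating $A e^{A\tau}$ over $[t, t+h]$) together with the H\"older bound for $g$. The integral term is the technical heart: one encounters competing singularities near $s = 0$ (coming from $K$) and near $s = t$ (coming from the kernel differences), necessitating a careful splitting of the integration range at a scale depending on $h$ as well as a basepoint shift $g(t)\to g(t+h)$. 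The main obstacle lies here, and it is exactly where the hypothesis $\int_0^r K(s)\, ds = O(r^{\varphi})$ enters: it provides the uniform control on the low-$s$ portion of the integral needed to convert an otherwise borderline-divergent contribution into a bound of order $h^{\kappa}$ with some $\kappa = \kappa(\delta, \mu, \varphi) > 0$ locally uniform in $t \in (0,T)$, yielding the asserted H\"older continuity of $G'$ into $X^{\gamma(\epsilon)+\mu}$.
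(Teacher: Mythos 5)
Your derivation of the identity $G'(t)=e^{At}g(t)+\int_0^t Ae^{A(t-s)}(g(s)-g(t))\,ds$ and your proof of \eqref{eq.required1} coincide with the paper's argument (the paper obtains the identity directly from Lemma \ref{lemma.Aux.1.1} and then applies exactly the two operator bounds you cite), so that part is fine. You also correctly identify where $\int_0^r K(s)\,ds=O(r^\varphi)$ enters: after shifting variables, $H(t+h)-H(t)$ splits into a piece $\int_0^h Ae^{A(t+h-s)}(g(s)-g(t+h))\,ds$ supported on $[0,h]$, which the hypothesis bounds by $MCt^{-\mu+\delta-1}h^\varphi$, plus an integral over $[0,t]$ involving the second difference $g(s+h)-g(s)-g(t+h)+g(t)$.

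The one step you flag as ``the technical heart'' but do not carry out is precisely the step that needs an idea, so I will record how the paper closes it. The second-difference term admits two Hölder bounds: grouping as $(g(s+h)-g(s))-(g(t+h)-g(t))$ gives $\leqslant (K(s)+K(t))h^\delta$, while grouping as $(g(s+h)-g(t+h))-(g(s)-g(t))$ gives $\leqslant (K(s+h)+K(s))(t-s)^\delta$. Taking the geometric mean with exponent $\omega\in(0,1)$ chosen so that $\mu<\delta\omega$ yields a bound of the form $(\cdots)h^{(1-\omega)\delta}(t-s)^{\delta\omega}$; against the kernel $ (t-s)^{-\mu-1}$ this produces a convergent integral $\int_0^t(\cdots)(t-s)^{-\mu+\delta\omega-1}ds$ times $h^{(1-\omega)\delta}$, i.e.\ a genuine positive Hölder exponent in $h$ with no range splitting needed. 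Your proposed alternative (splitting $[0,t]$ at a scale depending on $h$ and using one bound on each piece) would also work here, but as written it is an assertion rather than a proof; without either the interpolation or an executed splitting, the claim that the borderline contribution becomes $O(h^\kappa)$ with $\kappa>0$ is not established. The boundary term $e^{A(t+h)}g(t+h)-e^{At}g(t)$ you handle correctly (indeed more explicitly than the paper does).
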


\begin{proof}
From Lemma \ref{lemma.Aux.1.1} we know that 
\begin{equation*}
\frac{dG}{dt}(t) = AG(t) + g(t) = e^{At}g(t) + H(t) \quad \hbox{ for all }
t\in (0,T), 
\end{equation*}
where $H(t) = \displaystyle \int_0^{t} Ae^{A(t-s)}(g(s)-g(t))ds$. Hence, for 
$0\leqslant\mu< \delta$, we have 
\begin{align*}
\left\| \frac{dG}{dt}(t)\right\| _{X^{\gamma(\epsilon)+\mu}} & \leqslant
\|e^{At}g(t)\|_{X^{\gamma(\epsilon)+\mu}} + \int_0^{t}
\|Ae^{A(t-s)}(g(s)-g(t))\|_{X^{\gamma(\epsilon)+\mu}}ds \\
& \leqslant Mt^{-\mu}\|g(t)\|_{X^{\gamma(\epsilon)}} + M \int_0^{t}
K(s)(t-s)^{-\mu+\delta-1}ds,
\end{align*}
and \eqref{eq.required1} follows. For the second part fix $r_0>0$ such that $%
\int_0^r K(s)ds \leqslant Cr^{\varphi}$, for $0\leqslant r\leqslant r_0$.
Hence, for a fixed $t>0$ and $0<h\leqslant r_0$, we have 
\begin{align*}
H & (t+h)-H(t) = \int_0^{t+h}Ae^{A(t+h-s)}(g(s)-g(t+h))ds - \int_0^{t}
Ae^{A(t-s)}(g(s)-g(t))ds \\
& = \int_0^{h} Ae^{A(t+h-s)}(g(s)-g(t+h))ds + \int_0^{t} Ae^{A(t-s)}%
\underbrace{(g(s+h)-g(s)-g(t+h)+g(t))}_{(\ast)}ds.
\end{align*}
For the term $(\ast)$, we obtain the estimates 
\begin{equation}  \label{gshth}
\begin{aligned} \|g(s&+h)-g(s) -g(t+h)+ g(t)\|_{X^{\gamma(\epsilon)}} \\ &
\leqslant
\|g(s+h)-g(s)\|_{X^{\gamma(\epsilon)}}+\|g(t+h)-g(t)\|_{X^{\gamma(%
\epsilon)}} \leqslant K(s)h^\delta + K(t)h^\delta, \end{aligned}
\end{equation}
and 
\begin{equation}  \label{gshth2}
\begin{aligned} \|g(s+h)-g(s)-g(t+h&)+g(t)\|_{X^{\gamma(\epsilon)}} \\ &
\leqslant
\|g(s+h)-g(t+h)\|_{X^{\gamma(\epsilon)}}+\|g(t)-g(s)\|_{X^{\gamma(%
\epsilon)}} \\ & \leqslant K(s+h)(t-s)^\delta + K(s)(t-s)^\delta.
\end{aligned}
\end{equation}
Choosing $\omega \in(0,1)$ such that $\mu< \delta\omega$, interpolating %
\eqref{gshth} and \eqref{gshth2} we obtain 
\begin{align*}
\|g(s+h)-g&(s)-g(t+h)+g(t)\|_{X^{\gamma(\epsilon)}} \\
& \leqslant (K(s)+K(t))^{1-\omega}(K(s+h)+K(s))^{\omega}h^{(1-\omega)\delta}
(t-s)^{\delta\omega}.
\end{align*}

Thus 
\begin{align*}
\|H&(t+h)-H(t)\|_{X^{\gamma(\epsilon)+\mu}} \\
& \leqslant M\int_0^h K(s)(t+h-s)^{-\mu+\delta-1} ds \\
& \hspace{30pt} + Mh^{\delta(1-\omega)}\int_0^t
(K(s)+K(t))^{1-\omega}(K(s+h)+K(s))^{\omega} (t-s)^{-\mu+\delta\omega-1} ds
\\
& \leqslant MCt^{-\mu+\delta-1}h^{\varphi} \\
& \hspace{30pt} + Mh^{\delta(1-\omega)}\int_0^{t}
(K(s)+K(t))^{1-\omega}(K(s+h)+K(s))^{\omega} (t-s)^{-\mu+\delta\omega-1}ds.
\end{align*}
Since 
\begin{equation*}
\int_0^{t} (K(s)+K(t))^{1-\omega}(K(s+h)+K(s))^{\omega} (t-s)^{-\mu
+\delta\omega-1}ds <\infty 
\end{equation*}
for $t\in (0,T)$ fixed, and the map $(0,T)\ni t \mapsto e^{At}g(t) \in
X^{\gamma(\epsilon)+\mu}$ is locally H\"older continuous, the result follows.
\end{proof}

Lastly, we will need the following singular version of the Gr\"onwall
inequality, which proof is analogous to \cite[Lemma 7.1.1]{Henry}.

\begin{lemma}[Singular Gr\"onwall Inequality]
\label{Lgronwall} Let $a,b,c\geqslant 0$, $0\leqslant \alpha,\beta,\gamma <1$
and $u\colon(0,T)\to \mathbb{R}$ be an integrable function with 
\begin{equation}  \label{Egronwall}
0\leqslant u(t) \leqslant a t^{-\alpha}+ bt^{-\beta}+c\int_0^t
(t-s)^{-\gamma}u(s)ds \quad \hbox{ for a.e. } t\in (0,T).
\end{equation}
Then, there exists a constant $K\geqslant 0$ that depends only on $c,\gamma,T
$ such that 
\begin{equation*}
u(t)\leqslant \frac{K}{1-\alpha} a t^{-\alpha} + \frac{K}{1-\beta} b
t^{-\beta} \quad \hbox{ for a.e. } t\in (0,T). 
\end{equation*}
\end{lemma}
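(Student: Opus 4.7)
The plan is to follow Henry's classical iteration argument: substitute the assumed inequality into itself repeatedly, evaluate each iterate via the Euler Beta integral, and sum a convergent Mittag-Leffler-type series, verifying separately that the remainder vanishes. Writing $\phi(t)=at^{-\alpha}+bt^{-\beta}$ and defining the positive linear operator $(Lv)(t)=c\int_0^t(t-s)^{-\gamma}v(s)\,ds$, the hypothesis becomes $0\leqslant u\leqslant\phi+Lu$ a.e.\ on $(0,T)$. Iterating and using positivity of the kernel yields, for every $n\geqslant 0$,
\begin{equation*}
u(t)\leqslant\sum_{j=0}^{n}(L^{j}\phi)(t)+(L^{n+1}u)(t)\quad\hbox{ for a.e. } t\in(0,T).
\end{equation*}

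I would next evaluate the iterates $L^{j}\phi$ using the identity $\int_0^t(t-s)^{-\gamma}s^{-\tau}\,ds=B(1-\gamma,1-\tau)\,t^{1-\gamma-\tau}$, valid whenever $\tau<1$ and $\gamma<1$. Induction on $j$ gives
\begin{equation*}
L^{j}(at^{-\alpha})(t)=\frac{c^{j}\Gamma(1-\gamma)^{j}\Gamma(1-\alpha)}{\Gamma\bigl(j(1-\gamma)+1-\alpha\bigr)}\,a\,t^{j(1-\gamma)-\alpha},
\end{equation*}
and similarly with $\beta$ in place of $\alpha$. Because the Gamma denominator grows super-geometrically in $j$ by Stirling, while the numerator grows only geometrically, the series $\sum_{j\geqslant 0}L^{j}(at^{-\alpha})(t)$ converges and equals $a\,\Gamma(1-\alpha)\,t^{-\alpha}E_{1-\gamma,\,1-\alpha}(c\Gamma(1-\gamma)t^{1-\gamma})$, where $E_{p,q}$ denotes the two-parameter Mittag-Leffler function. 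Using $\Gamma(1-\alpha)=\Gamma(2-\alpha)/(1-\alpha)$ together with the uniform bound $\Gamma(2-\alpha)\leqslant 1$ on $[0,1]$, taking the supremum over $t\in[0,T]$ gives $\sum_{j\geqslant 0}L^{j}(at^{-\alpha})(t)\leqslant\frac{K_{1}}{1-\alpha}\,a\,t^{-\alpha}$ for a constant $K_{1}=K_{1}(c,\gamma,T)$, and the same argument yields the analogous bound with $\beta$.

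It remains to control the remainder $L^{n+1}u$. Its kernel is the $(n+1)$-fold convolution of $c(\cdot)^{-\gamma}$ with itself, which by the Beta identity equals $c^{n+1}\Gamma(1-\gamma)^{n+1}\Gamma((n+1)(1-\gamma))^{-1}\tau^{(n+1)(1-\gamma)-1}$. Choosing $n$ large enough that $(n+1)(1-\gamma)\geqslant 1$, this kernel is bounded on $[0,T]$, hence
\begin{equation*}
\|L^{n+1}u\|_{L^{\infty}(0,T)}\leqslant\frac{c^{n+1}\Gamma(1-\gamma)^{n+1}T^{(n+1)(1-\gamma)-1}}{\Gamma((n+1)(1-\gamma))}\,\|u\|_{L^{1}(0,T)}\xrightarrow{n\to\infty}0,
\end{equation*}
since the Gamma denominator dominates. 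Passing to the limit $n\to\infty$ in the iterated inequality yields the claim with $K=\max\{K_{1},K_{2}\}$. The main obstacle is making the $1/(1-\alpha)$ and $1/(1-\beta)$ factors explicit while keeping $K$ independent of $\alpha,\beta$; this is resolved by the identity $(1-\alpha)\Gamma(1-\alpha)=\Gamma(2-\alpha)$ together with the uniform boundedness of $\Gamma$ on $[1,2]$, which lets all Mittag-Leffler constants be absorbed into a single $K=K(c,\gamma,T)$.
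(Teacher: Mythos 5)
Your proof is correct and follows exactly the route the paper intends: the paper gives no proof of Lemma \ref{Lgronwall} beyond noting that it is analogous to \cite[Lemma 7.1.1]{Henry}, and your argument is precisely that classical iteration (substitute, evaluate the iterated kernels via the Beta integral, sum the Mittag-Leffler series, kill the remainder), augmented with the correct observation that $(1-\alpha)\Gamma(1-\alpha)=\Gamma(2-\alpha)$ and $\inf_{x>0}\Gamma(x)>0$ make the constant $K$ uniform in $\alpha,\beta$. No gaps worth flagging.
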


With these lemmas, we can present the main result of this section.

\begin{theorem}
\label{theorem.Regular} Let $f$ be an $\epsilon$-regular map relative to the
pair $(X^1,X^0)$ and $x \colon[0,\tau_0] \to X^1$ be the unique $\epsilon$%
-regular mild solution of \eqref{abseqo}. Then for each $0 <
\eta<\gamma(\epsilon)-\epsilon$ the map $(0,\tau_0]\ni t \mapsto\dfrac{dx}{dt%
}(t) \in X^{\gamma(\epsilon)+\eta}$ is locally H\"older continuous and there
exists a constant $C\geqslant 0$ such that 
\begin{equation*}
\Big\| \frac{dx}{dt}(t)\Big\|_{X^{\gamma(\epsilon)+\eta}} \leqslant C
t^{-\gamma(\epsilon)-\eta} \quad \hbox{ for } t\in (0,\tau_0]. 
\end{equation*}
In particular, $(0,\tau_0]\ni t \mapsto \frac{dx}{dt}(t)\in X^{\gamma(\epsilon)+\eta}$ is in $L^p(0,\tau_0;X^{\gamma(\epsilon)+\eta})$, provided that $p(\gamma(\epsilon)+\eta)<1$.
\end{theorem}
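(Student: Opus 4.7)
The strategy is to apply Lemma \ref{lemma.Regular} to $g(t) := f(x(t))$ and combine the result with the smoothness of the linear semigroup piece. Writing $x(t) = e^{At}x_0 + G(t)$ with $G$ as in (\ref{def.G}), we have $\frac{dx}{dt}(t) = Ae^{At}x_0 + \frac{dG}{dt}(t)$, and the linear piece already satisfies $\|Ae^{At}x_0\|_{X^{\gamma(\epsilon)+\eta}} \leqslant M t^{-\gamma(\epsilon)-\eta}\|x_0\|_{X^1}$ by (\ref{est.e}), providing the claimed decay. Hence the problem reduces to estimating $\frac{dG}{dt}$ via Lemma \ref{lemma.Regular}.

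The heart of the argument is to verify the Hölder hypothesis for $g$. First I would obtain local Hölder continuity of $x$ into $X^{1+\epsilon}$ from the decomposition
\begin{equation*}
x(t+h) - x(t) = (e^{Ah} - I)\,x(t) + \int_t^{t+h} e^{A(t+h-s)}\, f(x(s))\, ds,
\end{equation*}
bounding the first term by the standard semigroup estimate $\|(e^{Ah}-I)y\|_{X^{1+\epsilon}} \leqslant C\, h^{\gamma(\epsilon)-\epsilon}\|y\|_{X^{1+\gamma(\epsilon)}}$ together with Lemma \ref{lemma.Aux.1} for $\|x(t)\|_{X^{1+\gamma(\epsilon)}}$, and the integral by (\ref{est.e}), (\ref{epsbound}) and Lemma \ref{lemma.Aux.1}. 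This yields $\|x(t+h)-x(t)\|_{X^{1+\epsilon}} \leqslant \tilde K(t)\, h^{\gamma(\epsilon)-\epsilon}$ with $\tilde K(t)$ a negative power of $t$. Inserting this into (\ref{epslips}) and using Lemma \ref{lemma.Aux.1} once more gives
\begin{equation*}
\|g(t+h) - g(t)\|_{X^{\gamma(\epsilon)}} \leqslant K(t)\, h^{\delta}, \qquad \delta := \gamma(\epsilon)-\epsilon,
\end{equation*}
where $K(t) = O(t^{-\beta})$ with $\beta < 1$ (the bound $\rho\epsilon \leqslant \gamma(\epsilon)$ is what keeps $\beta < 1$). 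Consequently $\int_0^r K(s)\, ds = O(r^{1-\beta})$ with $1-\beta > 0$, and the hypotheses of Lemma \ref{lemma.Regular} hold.

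Lemma \ref{lemma.Regular} then provides local Hölder continuity of $\frac{dG}{dt}$ into $X^{\gamma(\epsilon)+\mu}$ for every $0\leqslant \mu < \delta = \gamma(\epsilon)-\epsilon$, which combined with the smooth linear piece yields the asserted Hölder regularity of $\frac{dx}{dt}$ for every $\eta \in (0, \gamma(\epsilon)-\epsilon)$. For the pointwise bound, I would use (\ref{eq.required1}): the term $M t^{-\eta}\|g(t)\|_{X^{\gamma(\epsilon)}}$ is controlled by (\ref{epsbound}) and Lemma \ref{lemma.Aux.1} to give $O(t^{-\eta-\rho\epsilon}) = O(t^{-\gamma(\epsilon)-\eta})$ using $\rho\epsilon \leqslant \gamma(\epsilon)$, while the integral $M\int_0^t K(s)(t-s)^{-\eta+\delta-1}\, ds$ is bounded by $O(t^{-\gamma(\epsilon)-\eta})$ via a direct Beta-function computation with the power-type $K$. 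The $L^p$ integrability then follows immediately from the pointwise estimate since $p(\gamma(\epsilon)+\eta) < 1$.

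The main obstacle is the careful bookkeeping in the Hölder estimate of $x$ into $X^{1+\epsilon}$: tracking the precise powers of $t$ that come from the smoothing of $e^{At}$ and from the a priori bounds in Lemma \ref{lemma.Aux.1}, so that $K(r)$ has the right integrability $\int_0^r K(s)\, ds = O(r^\varphi)$ with $\varphi > 0$. This step relies crucially on the structural condition $\rho\epsilon \leqslant \gamma(\epsilon)$ built into the definition of an $\epsilon$-regular map, which is also what makes the pointwise bound in the final step collapse to the single power $t^{-\gamma(\epsilon)-\eta}$.
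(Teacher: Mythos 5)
Your overall strategy---reduce to Lemma \ref{lemma.Regular} applied to $g=f(x(\cdot))$ and add the smooth linear piece---is the same as the paper's, and your treatment of $Ae^{At}x_0$ and of the final $L^p$ claim is fine. The gap is in the verification of the hypotheses of Lemma \ref{lemma.Regular}, and it is not mere bookkeeping. If you work directly on $(0,\tau_0]$, the H\"older coefficient you obtain is \emph{not} $O(t^{-\beta})$ with $\beta<1$ in general. Tracking the powers: the integral term in your increment formula contributes $\|f(x(s))\|_{X^{\gamma(\epsilon)}}=O(s^{-\rho\epsilon})$ by \eqref{epsbound} and Lemma \ref{lemma.Aux.1}, and passing from $\|x(t+h)-x(t)\|_{X^{1+\epsilon}}$ to $\|g(t+h)-g(t)\|_{X^{\gamma(\epsilon)}}$ via \eqref{epslips} multiplies by a further factor $O(t^{-(\rho-1)\epsilon})$; so $K(t)\gtrsim t^{-(2\rho-1)\epsilon}$, and $(2\rho-1)\epsilon\leqslant 2\gamma(\epsilon)-\epsilon$ can exceed $1$ (e.g.\ $\epsilon=1/10$, $\gamma(\epsilon)=9/10$, $\rho$ close to $9$). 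Then $\int_0^rK(s)\,ds=\infty$, Lemma \ref{lemma.Regular} is not applicable, and the integral in \eqref{eq.required1} diverges. A secondary problem: your bound for $(e^{Ah}-I)x(t)$ invokes $\|x(t)\|_{X^{1+\gamma(\epsilon)}}$, which Lemma \ref{lemma.Aux.1} does not control (its range is $\theta<\gamma(\epsilon)$ strictly, and the endpoint norm may be infinite); this part is repairable by taking $\theta\in(\epsilon+\eta,\gamma(\epsilon))$, at the cost of a smaller H\"older exponent $\delta=\theta-\epsilon$, which still dominates $\eta$.

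The missing idea that closes the argument is a time shift. The paper fixes $\tau\in(0,\tau_0)$ and applies Lemma \ref{lemma.Regular} to $g(t)=f(x(t+\tau))$ on $(0,\tau_0-\tau]$: there $g$ is bounded, and writing $x(t+\tau+h)-x(t+\tau)=[e^{Ah}-I]e^{At}x(\tau)+\int_0^he^{A(t+h-s)}g(s)\,ds+\int_0^te^{A(t-s)}(g(s+h)-g(s))\,ds$, estimating $[e^{Ah}-I]e^{At}x(\tau)=\int_0^hAe^{A(t+r)}x(\tau)\,dr$, and closing with the singular Gr\"onwall inequality yields a \emph{Lipschitz} modulus $\|g(t+h)-g(t)\|_{X^{\gamma(\epsilon)}}\leqslant Kh\,[t^{\eta-1}\|x(\tau)\|_{X^{1+\epsilon+\eta}}+t^{\gamma(\epsilon)-\epsilon-1}]$, i.e.\ $\delta=1$ with an integrable singular coefficient. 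Only at the very end is the $\tau$-dependence resolved through $\|x(\tau)\|_{X^{1+\epsilon+\eta}}\leqslant C\tau^{-\epsilon-\eta}$ and the choice $\tau=t/2$, which produces the stated rate $t^{-\gamma(\epsilon)-\eta}$. Without some such device, your direct estimate does not satisfy the integrability hypothesis of Lemma \ref{lemma.Regular}.
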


\begin{proof}
Fix $\tau\in(0,\tau_0)$ and, for $t\in(0,\tau_0-\tau]$, we define $g(t) =
f(x(t+\tau))$. Since $x\in C([\tau,\tau_0],X^{1+\epsilon})$ and $f$ is an $%
\epsilon$-regular map relative to the pair $(X^1,X^0)$, for $%
t,s\in(0,\tau_0-\tau]$ we have 
\begin{align*}
\|g(t)-g(s)\|_{X^{\gamma(\epsilon)}}&
=\|f(x(t+\tau))-f(x(s+\tau))\|_{X^{\gamma (\epsilon)}} \\
& \leqslant
c\|x(t+\tau)-x(s+\tau)\|_{X^{1+\epsilon}}(1+\|x(t+\tau)\|_{X^{1+\epsilon}}^{%
\rho-1}+\|x(s+\tau)\|_{X^{1+\epsilon}}^{\rho-1}) \\
& \leqslant C_1 \|x(t+\tau)-x(s+\tau)\|_{X^{1+\epsilon}},
\end{align*}
and 
\begin{equation*}
\|g(t)\|_{X^{\gamma(\epsilon)}} = \|f(x(t+\tau))\|_{X^{\gamma(\epsilon)}}
\leqslant c(\|x(t+\tau)\|_{X^{1+\epsilon}}^\rho+1) \leqslant C_2, 
\end{equation*}
for some constants $C_1,C_2\geqslant0$. Now, for $0<t<t+h\leqslant\tau_0-\tau
$ we have 
\begin{align*}
x(t+\tau+h)-x(t+\tau) = [e^{Ah}-I] & e^{At}x(\tau) + \int_0^{h}
e^{A(t+h-s)}g(s)ds \\
& + \int_0^{t} e^{A(t-s)}(g(s+h)-g(s))ds.
\end{align*}
Since 
\begin{equation*}
[e^{Ah}-I] e^{At}x(\tau) = \int_0^{h} \frac{d}{dr}e^{A(t+r)}x(\tau)dr=
\int_0^{h} Ae^{A(t+r)}x(\tau)dr, 
\end{equation*}
we have 
\begin{equation*}
\|[e^{Ah}-I] e^{At}x(\tau)\|_{X^{1+\epsilon}} \leqslant Mht^{\eta-1}
\|x(\tau)\|_{X^{1+\epsilon+\eta}}, 
\end{equation*}
and for the second term we obtain 
\begin{equation*}
\int_0^h \|e^{A(t+h-s)}g(s)\|_{X^{1+\epsilon}}ds \leqslant MC_2ht^{\gamma
(\epsilon)-\epsilon-1}. 
\end{equation*}
For the last term, we have 
\begin{equation*}
\int_0^t \|e^{A(t-s)}(g(s+h)-g(s))\|_{X^{1+\epsilon}}ds \leqslant M\int_0^t
(t-s)^{\gamma(\epsilon)-\epsilon-1}\|g(s+h)-g(s)\|_{X^{\gamma
(\epsilon)}}ds, 
\end{equation*}
therefore 
\begin{align*}
\|g(t+h)-g(t)\|_{X^{\gamma(\epsilon)}} & \leqslant
C_1\|x(t+\tau+h)-x(t+\tau)\|_{X^{1+\epsilon}} \\
& \leqslant MC_1h
t^{\eta-1}\|x(\tau)\|_{X^{1+\epsilon+\eta}}+MC_1C_2ht^{\gamma(\epsilon)-%
\epsilon-1} \\
& \hspace{20pt} + MC_1\int_0^t (t-s)^{\gamma
(\epsilon)-\epsilon-1}\|g(s+h)-g(s)\|_{X^{\gamma(\epsilon)}}ds.
\end{align*}
From Lemma \ref{Lgronwall} there exists a constant $K\geqslant 0$ such that 
\begin{equation*}
\|g(t+h)-g(t)\|_{X^{\gamma(\epsilon)}} \leqslant Kh\big[t^{\eta-1}\|x(\tau)%
\|_{X^{1+\epsilon+\eta}}+t^{\gamma(\epsilon)-\epsilon-1}\big]. 
\end{equation*}

Setting $K(s) = K\big[s^{\eta-1}\|x(\tau)\|_{X^{1+\epsilon+\eta}}+s^{\gamma(%
\epsilon)-\epsilon-1}\big]$ for $s\in(0,\tau_0-\tau]$, we see that 
\begin{equation*}
\|g(t)-g(s)\|_{X^{\gamma(\epsilon)}} \leqslant K(s)(t-s) 
\end{equation*}
and, moreover, 
\begin{equation*}
\int_0^{r} K(s)ds = K\left[ \frac{r^{\eta}}{\eta}\|x(\tau)\|_{X^{1+\epsilon
+\eta}}+\frac{r^{\gamma(\epsilon)-\epsilon}}{\gamma(\epsilon)-\epsilon }%
\right]. 
\end{equation*}
Hence, it follows from Lemma \ref{lemma.Regular} with $\delta=1$ that 
\begin{equation}  \label{est.G.grande}
\begin{aligned} \left\|\frac{dG}{dt}(t)\right\|_{X^{\gamma(\epsilon)+\eta}}
& \leqslant Mt^{-\eta}\|g(t)\|_{X^{\gamma(\epsilon)}}+M\int_0^t
K(s)(t-s)^{-\eta}ds\\ & \leqslant MC_2t^{-\eta} +
MK\mathcal{B}(\eta,1-\eta)\|x(\tau)\|_{X^{1+\epsilon+\eta}} \\ &
\hspace{80pt} + MK
\mathcal{B}(\gamma(\epsilon)-\epsilon,1-\eta)t^{\gamma(\epsilon)-\epsilon-%
\eta}, \end{aligned}
\end{equation}
since $0<\eta<\gamma(\epsilon)-\epsilon <1$ (here $\mathcal{B}$ denotes the
beta function), and that $(0,T)\ni t\mapsto \frac{dG}{dt}(t) \in
X^{\gamma(\epsilon)+\eta}$ is locally H\"older continuous. Since 
\begin{equation*}
\|Ae^{At}x(\tau)\|_{X^{\gamma(\epsilon)+\eta}} \leqslant Mt^{-\gamma
(\epsilon)+\epsilon}\|x(\tau)\|_{X^{1+\epsilon+\eta}}, 
\end{equation*}
together with \eqref{est.G.grande} we obtain a constant $C_3\geqslant 0$
such that 
\begin{equation*}
\left\| \frac{dx}{dt}(t)\right\|_{X^{\gamma(\epsilon)+\eta}} \leqslant C_3%
\Big[t^{-\eta} +
t^{-\gamma(\epsilon)+\epsilon}\|x(\tau)\|_{X^{1+\epsilon+\eta}}+t^{\gamma(%
\epsilon)-\epsilon-\eta}\Big], 
\end{equation*}
and that $(0,T)\ni t \mapsto\frac{dx}{dt}(t)\in X^{\gamma(\epsilon)+\eta}$
is locally H\"older continuous. From Lemma \ref{lemma.Aux.1}, there exists a
constant $C\geqslant 0$ such that 
\begin{equation*}
\|x(\tau)\|_{X^{1+\epsilon+\eta}} \leqslant C\tau^{-\epsilon-\eta} 
\end{equation*}
and, therefore, there exists a constant $C_4\geqslant 0$ such that 
\begin{equation*}
\left\|\frac{dx}{dt}(t)\right\|_{X^{\gamma(\epsilon)+\eta}} \leqslant
C_4[t^{-\eta} +
t^{-\gamma(\epsilon)+\epsilon}\tau^{-\epsilon-\eta}+t^{\gamma(\epsilon)-%
\epsilon-\eta}]. 
\end{equation*}
Finally, taking $\tau=\frac{t}{2}$, we obtain 
\begin{equation*}
\left\| \frac{dx}{dt}(t)\right\| _{X^{\gamma(\epsilon)+\eta}} \leqslant
4C_4[t^{-\eta} +
t^{-\gamma(\epsilon)-\eta}+t^{\gamma(\epsilon)-\epsilon-\eta}] \leqslant
C_5t^{-\gamma(\epsilon)-\eta}, 
\end{equation*}
for all $t\in(0,\tau_0]$, for some constant $C_5\geqslant 0$.
\end{proof}

\section{Very weak formulation of the globally modified Navier-Stokes
equation}

\label{vwform_gmnse}

In this section we show that the nonlinearities of the GMNSE are $\frac38$%
-regular maps, obtain the very weak formulation of the GMNSE, prove Theorem %
\ref{Main1}, and guarantee the existence of the global attractor in $H$ for
the GMNSE.

\subsection{Nonlinearity of the GMNSE}

We begin with some results involving the map $f_N$ defined in \eqref{defFN},
to study the nonlinear term $F_N(u)(u\cdot \nabla u)$ of the GMNSE. We note
that, from \cite[Lemma 4]{CKR}, for every $N>0$ and $s,t\geqslant 0$ with $%
s+t>0$ we have 
\begin{equation}  \label{CKR-l4}
|f_{N}(s)-f_{N}(t)|\leqslant\frac{1}{\max\{s,t\}} |s-t|.
\end{equation}
%\begin{proof}
%Separating in the cases $s,t\leqslant N$, $s,t\geqslant N$, and $s\leqslant
%N\leqslant t$, the result follows by inspection.
%\end{proof}

Writing, for $r\geqslant 1$, $L^{r} =
L^{r}(\Omega)^{3}$, to simplify the notation, we point out that for each $N>0
$ and $u\in L^4$ we have 
\begin{equation}  \label{fNuN}
F_{N}(u)\|u\|_{L^4}\leqslant N,
\end{equation}
and for any $N>0$ and $u,v\in L^4(\Omega)$ with $\|u\|_{L^4(\Omega)}+\|v\|_{L^4(\Omega)}>0$ we have 
\begin{equation*}
|F_{N}(u)-F_{N}(v)|\leqslant\frac{1}{\max\{\|u\|_{L^4(\Omega)},\|v\|_{L^4(\Omega)}\}}
\|u-v\|_{L^4(\Omega)}, 
\end{equation*}
just using \eqref{CKR-l4} and noting that $|\|u\|_{L^4(\Omega)}-\|v\|_{L^4(\Omega)}|%
\leqslant\|u-v\|_{L^4(\Omega)}$. %\begin{proof}
%Simply apply Lemma \ref{CKR-l4} and note that $|\|u\|_{H^{1}_0}%
%-\|v\|_{H^{1}_0}|\leqslant\|u-v\|_{H^{1}_0}$.
%\end{proof}

\medskip Before continuing, we note that the tensor product in $\mathbb{R}^3$ will be of great importance and we recall it here. The \textbf{tensor product} between $u=(u_1,u_2,u_3), \ v=(v_1,v_2,v_3) \in \mathbb{R}^3$ is the $3\times 3$ real matrix given by 
\begin{equation*}
u\otimes v=
\begin{pmatrix} 
u_1v_1 & u_1v_2 & u_1v_3\\
u_2v_1 & u_2v_2 & u_2v_3\\
u_3v_1 & u_3v_2 & u_3v_3
\end{pmatrix}=(u_iv_j)_{i,j=1}^3.
\end{equation*}
%\begin{equation*}
%u\otimes v=(u_iv_j)_{i,j=1}^n. 
%\end{equation*}
Clearly, with the following properties hold:

\begin{itemize}
\item $(u+v)\otimes w = u\otimes w + v\otimes w;$

\item $|u\otimes v|_{M_3({\mathbb{R}})}\leqslant |u|_{\mathbb{R}^3}\, |v|_{\mathbb{R}^3};$

\item $u\otimes v=(v\otimes u)^{t}$, where $A^{t}$ denotes the transposed
matrix of $A\in M_3(\mathbb{R})$.
\end{itemize}

\begin{lemma} \label{lemmaEstFN} 
For all $u,v\in L^4 $, we have 
\begin{equation*}
\|F_N(u) u\otimes u - F_N(v) v\otimes v\|_{L^2} \leqslant
3N\|u-v\|_{L^4}, 
\end{equation*}
and
\begin{equation*}
\|F_N(u) u\otimes u\|_{L^2} \leqslant N\|u\|_{L^4}.
\end{equation*}
\end{lemma}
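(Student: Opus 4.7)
My plan is to prove the second (bound) inequality first, as it is essentially a warm-up, and then reduce the first (Lipschitz) inequality to a telescoping sum with three pieces.

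\textbf{Second inequality.} For any $u\in L^4$, the pointwise tensor estimate $|u(x)\otimes u(x)|_{M_3(\mathbb{R})}\leqslant |u(x)|_{\mathbb{R}^3}^2$ gives
\begin{equation*}
\|u\otimes u\|_{L^2}^2 \;=\; \int_\Omega |u\otimes u|^2 \;\leqslant\; \int_\Omega |u|^4 \;=\; \|u\|_{L^4}^4,
\end{equation*}
so $\|u\otimes u\|_{L^2}\leqslant \|u\|_{L^4}^2$. Multiplying by $F_N(u)$ and using \eqref{fNuN} in the form $F_N(u)\|u\|_{L^4}\leqslant N$ yields $\|F_N(u)u\otimes u\|_{L^2}\leqslant N\|u\|_{L^4}$.

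\textbf{First inequality.} I would use the telescoping identity
\begin{equation*}
F_N(u)u\otimes u - F_N(v)v\otimes v \;=\; F_N(u)(u-v)\otimes u \;+\; (F_N(u)-F_N(v))\,v\otimes u \;+\; F_N(v)\, v\otimes (u-v),
\end{equation*}
which is obtained by inserting and subtracting $F_N(u)v\otimes u$ and $F_N(v)v\otimes u$. Taking $L^2$-norms, using the tensor inequality $|a\otimes b|\leqslant|a||b|$ and H\"older with exponent $4$ on each pair of factors, the outer two pieces are immediately controlled by $F_N(u)\|u\|_{L^4}\|u-v\|_{L^4}$ and $F_N(v)\|v\|_{L^4}\|u-v\|_{L^4}$, each of which is bounded by $N\|u-v\|_{L^4}$ thanks to \eqref{fNuN}.

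\textbf{The only real work is the middle term}, where I must show
\begin{equation*}
|F_N(u)-F_N(v)|\,\|u\|_{L^4}\|v\|_{L^4} \;\leqslant\; N\|u-v\|_{L^4}.
\end{equation*}
The plan is a short case split (or equivalently, the observation that $r\mapsto rf_N(r)=\min\{r,N\}$ is $1$-Lipschitz). If $\min\{\|u\|_{L^4},\|v\|_{L^4}\}\leqslant N$, then the Lipschitz estimate \eqref{CKR-l4} gives
\begin{equation*}
|F_N(u)-F_N(v)|\,\|u\|_{L^4}\|v\|_{L^4} \;\leqslant\; \frac{\|u-v\|_{L^4}}{\max\{\|u\|_{L^4},\|v\|_{L^4}\}}\,\|u\|_{L^4}\|v\|_{L^4} \;=\; \min\{\|u\|_{L^4},\|v\|_{L^4}\}\,\|u-v\|_{L^4}\;\leqslant\; N\|u-v\|_{L^4}.
\end{equation*}
If instead both norms exceed $N$, then $F_N(u)=N/\|u\|_{L^4}$ and $F_N(v)=N/\|v\|_{L^4}$, and a direct computation together with $\bigl|\|u\|_{L^4}-\|v\|_{L^4}\bigr|\leqslant\|u-v\|_{L^4}$ again gives exactly $N\|u-v\|_{L^4}$. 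Combining the three pieces produces the constant $3N$. The trivial case $u\equiv 0\equiv v$ is handled separately by interpreting $F_N$ as $1$ there.
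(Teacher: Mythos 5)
Your proof is correct, and it organizes the argument differently from the paper. The paper attacks the Lipschitz estimate by a three-way case split at the outset, according to whether $\|u\|_{L^4}$ and $\|v\|_{L^4}$ lie above or below $N$, and uses a different algebraic decomposition of $F_N(u)\,u\otimes u-F_N(v)\,v\otimes v$ in each case (in the mixed case $\|u\|_{L^4}\leqslant N\leqslant\|v\|_{L^4}$, the identity $\|v\|_{L^4}\,u\otimes u-N\,v\otimes v=\|v\|_{L^4}\,u\otimes(u-v)+(\|v\|_{L^4}-N)\,u\otimes v+N\,(u-v)\otimes v$ together with $\|v\|_{L^4}-N\leqslant\|u-v\|_{L^4}$). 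You instead perform a single telescoping valid for all $u,v$, control the two outer terms uniformly by \eqref{fNuN}, and confine all case analysis to the scalar inequality $|F_N(u)-F_N(v)|\,\|u\|_{L^4}\|v\|_{L^4}\leqslant N\|u-v\|_{L^4}$, which you verify from \eqref{CKR-l4} when $\min\{\|u\|_{L^4},\|v\|_{L^4}\}\leqslant N$ and by direct computation otherwise. Your version is more modular (the tensor manipulation happens once, and the nonlinearity of $f_N$ is isolated in a one-line scalar estimate) and yields the same constant $3N$; the paper's version avoids stating that scalar inequality as a separate claim, at the cost of three separate vector computations. One caveat: your parenthetical that the middle bound is ``equivalent'' to the $1$-Lipschitz character of $r\mapsto r f_N(r)=\min\{r,N\}$ is not literally right (the quantity to control is $|\min\{s,N\}\,t-\min\{t,N\}\,s|$ with $s=\|u\|_{L^4}$, $t=\|v\|_{L^4}$, not $|\min\{s,N\}-\min\{t,N\}|$), but this is harmless since the two-case verification you actually carry out, including the degenerate case $u=v=0$, is complete.
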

\begin{proof}
Note that 
\begin{equation*}
\|F_{N}(u)u\otimes u\|_{L^2} \leqslant F_{N}(u)\|u\|_{L^4}\|u\|_{L^4} \leqslant N\|u\|_{L^4},
\end{equation*}
where in the last inequality we used \eqref{fNuN}.

Now we deal with the first estimate. Firstly, we treat the case where $%
\|u\|_{L^4},\|v\|_{L^4} \leqslant N$. In this case, we have $%
F_{N}(u)=F_{N}(v)=1$ and 
\begin{align*}
\|F_{N}(u) & u\otimes u - F_{N}(v) v\otimes v\|_{L^2}=\|u\otimes u -
v\otimes v\|_{L^2}  \leqslant\|u\otimes(u-v)\|_{L^2} +\|(u-v)\otimes v\|_{L^2} \\
& \leqslant\|u\|_{L^4}\|u-v\|_{L^4} + \|u-v\|_{L^4}\|v\|_{L^4} \leqslant  (\|u\|_{L^4}+\|v\|_{L^4})\|u-v\|_{L^4} \leqslant2N \|u-v\|_{L^4}.
\end{align*}

Now we assume that $\|v\|_{L^4}, \|u\|_{L^4}\geqslant N$. We can
assume, without loss of generality, that $\|v\|_{L^4}\geqslant
\|u\|_{L^4}$. If this case, we have 
\begin{align*}
\| & F_{N}(u) u\otimes u - F_{N}(v) v\otimes v\|_{L^2}=N\Big\|\frac {%
u\otimes u}{\|u\|_{L^4}} - \frac{v\otimes v}{\|v\|_{L^4}}\Big\|%
_{L^2} \\
& \leqslant N\frac{\|u\otimes(u-v)\|_{L^2}}{\|u\|_{L^4}} + N\|u\otimes
v\|_{L^2}\Big|\frac{1}{\|u\|_{L^4}}-\frac{1}{\|v\|_{L^{4}}}\Big| + N 
\frac{\|u\otimes(u-v)\|_{L^{2}}}{\|v\|_{L^{4}}} \\
& \leqslant N\frac{\|u\|_{L^{4}}\|u-v\|_{L^{4}}}{\|u\|_{L^{4}}} + N%
\|u\|_{L^{4}}\|v\|_{L^{4}}\frac{|\|u\|_{L^{4}}-\|v\|_{L^{4}}|}{%
\|u\|_{L^{4}}\|v\|_{L^{4}}}+N\frac{\|u\|_{L^{4}}\|u-v\|_{L^{4}}}{%
\|v\|_{L^{4}}} \\
& \leqslant 3 N\|u-v\|_{L^{4}}.
\end{align*}
%and using the continuous inclusion $H^{1}_0\subset L^{6}$, the inequality
%$||a|-|b||\leqslant|a-b|$ and the fact that $\|u\|_{H^{1}_0}\leqslant
%\|v\|_{H^{1}_0}$ we obtain
%\[
%\|F_{N}(u) u\otimes u - F_{N}(v) v\otimes v\|_{L^{3}} \leqslant3c^2 %
%N\|u-v\|_{H^{1}_0}.
%\]

For the last case, we can assume, without loss of generality, that 
$\|u\|_{L^{4}}\leqslant N \leqslant\|v\|_{L^{4}}$. Hence 
\begin{align*}
F_{N} & (u) u\otimes u - F_{N}(v) v\otimes v = \frac{1}{\|v\|_{L^{4}}}\big(
\|v\|_{L^{4}}u\otimes u - N v\otimes v\big) \\
& = \frac{1}{\|v\|_{L^{4}}}\big(\|v\|_{L^{4}} u\otimes
(u-v)+(\|v\|_{L^{4}}-N)u\otimes v + N(u-v)\otimes v\big).
\end{align*}
Estimating the $L^{2}$ norm and using the fact that $\|u\|_{L^{4}}\leqslant N$ and that 
\begin{equation*}
\|v\|_{L^{4}}-N \leqslant\|v\|_{L^{4}}-\|u\|_{L^{4}}
\leqslant\|u-v\|_{L^{4}}, 
\end{equation*}
we obtain 
\begin{equation*}
\|F_{N}(u)u\otimes u - F_{N}(v)v\otimes v\|_{L^{2}} \leqslant3N
\|u-v\|_{L^{4}}, 
\end{equation*}
and the proof is complete.
\end{proof}

We recall the following Sobolev type embeddings for a bounded domain $%
\Omega\subset\mathbb{R}^{3}$ with smooth boundary $\partial \Omega$. Given $%
s>0$ we consider $m$ the %largest integer less than or equal to $s$
smallest integer greater than or equal to $s$. Then 
\begin{equation*}  \label{embedding-frationary-order-N}
[\, L^{p}(\Omega)^{3},W^{m,p}(\Omega)^{3}\,]_{s/m} = H^{s,p}(\Omega)^{3}.
\end{equation*}
Additionally, for $1\leqslant p\leqslant q= 3p/(3-sp)$, we have 
\begin{equation}  \label{embedding-frationary-order-N.2}
H^{s,p}(\Omega)^{3}\subset L^{q}(\Omega)^{3}.
\end{equation}

Hence, we arrive at the following result without having to know explicitly $%
H_\alpha$.

\begin{proposition}
\label{embbeding-scales-NSE} If $\alpha\in(0,1)$ and $q=\frac{6}{3-4\alpha}$%
, we have $H_\alpha \subset H^{2\alpha,2}(\Omega)^3$ and $H_\alpha \subset
L^{q}(\Omega)^{3}$.
\end{proposition}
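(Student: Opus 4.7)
The plan is a two-step chain: first identify $H_\alpha$ with (a subspace of) a fractional Sobolev space on $\Omega$ by interpolating between the endpoints $H_0$ and $H_1$, and then apply the Sobolev embedding already recalled in \eqref{embedding-frationary-order-N.2} to pass from $H^{2\alpha,2}(\Omega)^3$ to $L^q(\Omega)^3$. Neither step involves any calculation specific to the Stokes operator beyond the two ``endpoint'' inclusions $H\hookrightarrow L^2(\Omega)^3$ and $D(A)\hookrightarrow W^{2,2}(\Omega)^3$ already recorded in the text.

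For the first step, the construction of the interpolation--extrapolation scale generated by the sectorial operator $(H,A)$ identifies, for $\alpha\in(0,1)$, the fractional power space $H_\alpha$ with the complex interpolation space $[H_0,H_1]_\alpha$; this is part of Amann's framework already invoked for \eqref{eq.dual} (see \cite[Chapter V]{amann}). By functoriality of complex interpolation applied to the continuous inclusions $H_0=H\hookrightarrow L^2(\Omega)^3$ and $H_1=D(A)\hookrightarrow W^{2,2}(\Omega)^3\cap H$, one obtains
\[
H_\alpha = [H_0,H_1]_\alpha \hookrightarrow [L^2(\Omega)^3,W^{2,2}(\Omega)^3]_\alpha = H^{2\alpha,2}(\Omega)^3,
\]
where the last equality is precisely the definition of $H^{s,p}$ recalled just before \eqref{embedding-frationary-order-N.2}, taken with $p=2$, $m=2$ and $s=2\alpha$.

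For the second step, I would apply \eqref{embedding-frationary-order-N.2} directly with $p=2$ and $s=2\alpha$. Then $sp=4\alpha$, which is $<3$ under the tacit assumption making $q=\frac{6}{3-4\alpha}$ finite and positive, and the formula $q=\frac{3p}{3-sp}$ gives
\[
q=\frac{3\cdot 2}{3-4\alpha}=\frac{6}{3-4\alpha},
\]
so $H^{2\alpha,2}(\Omega)^3\hookrightarrow L^q(\Omega)^3$. Chaining this with the previous inclusion yields both assertions $H_\alpha\subset H^{2\alpha,2}(\Omega)^3$ and $H_\alpha\subset L^q(\Omega)^3$.

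The only real point requiring care is the interpolation identification of $H_\alpha$ with $[H_0,H_1]_\alpha$, since the paper's setup of the extrapolation scale does not spell this out explicitly; however, this is a standard feature of the scale for sectorial operators on reflexive spaces (and reflexivity of $H$ has already been used for \eqref{eq.dual}), so citing the relevant result from \cite{amann} suffices. No new technical obstacle is expected.
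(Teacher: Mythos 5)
Your proposal is correct and follows essentially the same route as the paper: the paper's proof likewise interpolates the endpoint inclusions $H\subset L^2(\Omega)^3$ and $D(A)\subset W^{2,2}(\Omega)^3$ to get $H_\alpha\subset H^{2\alpha,2}(\Omega)^3$ and then invokes \eqref{embedding-frationary-order-N.2}. You merely make explicit the identification $H_\alpha=[H_0,H_1]_\alpha$ and the functoriality of interpolation, which the paper leaves implicit.
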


\begin{proof}
By construction $D(A) \subset W^{2,2}(\Omega)^{3}$ and since $A$ is
sectorial, the inclusion holds with equivalent norms. Interpolating these
spaces we arrive at $H_\alpha \subset H^{2\alpha,2}(\Omega)^{3}$ and apply %
\eqref{embedding-frationary-order-N.2}.
\end{proof}

%{\color{red}
%Also note that $H_{\frac12} = V$ and $H_1=D(A)$, thus $H_{-\frac12}=V^\ast$ and $H_{-1} = D(A)^\ast$.
%}

Now we define the spaces $X^1$ and $X^0$ that we use for the rest of the
paper. Set 
\begin{equation}  \label{defX}
X^1 = H \quad \hbox{ and } \quad X^0 = H_{-1}.
\end{equation}
Hence, choosing 
\begin{equation}  \label{defEpsilon}
\epsilon = \frac38 \quad \hbox{ and } \quad \gamma(\epsilon) = \frac12,
\end{equation}
we have 
\begin{equation*}
X^{\gamma(\epsilon)} = X^{\frac12} = H_{-\frac12} \quad \hbox{ and } \quad
X^{1+\epsilon} = X^{\frac{11}{8}} = H_{\frac38}.
\end{equation*}
Note that from Proposition \ref{embbeding-scales-NSE} we obtain 
\[
X^{1+\epsilon} = H_{\frac38} \subset L^4.
\]
Also, we point out that, from \eqref{eq.dual}, $X^{\gamma(\epsilon)}=H_{-\frac12} = (H_{\frac12})^\ast = V^\ast$.

%Let $m\in\mathbb{N}$ and $E^2 _0 = L^2 _{\sigma}(\Omega)^{3}$. Since the
%Stokes operator $A_{2}=P_{2}\Delta$ (where $P_{2}$ is the Leray projection of
%$L^2 (\Omega)^{3}$ into $E^2 _0$) is closed and densely defined we can
%consider the interpolation-extrapolation scale of order $m$ generated by
%$(E^2 _0,A_{2})$
%\[
%\{(E_{\alpha}^2 ,\, A_{2,\alpha})\colon\alpha\geqslant-m\}.
%\]
%Consider $(A_2)^\#$ the dual operator of $A_2$. In \cite[page 698]{FuM} we have the following characterization $(A_2)^\#=A_{2}$ and $D(A_2^\#)=E^2_1$. We already know that the dual operator is closed and densely defined ($E^2_0$ is reflexive) and we can consider the interpolation-extrapolation scale generated by $(E^2_1,A_2)$, which we write as
%\[
%\{(E_\alpha^2,A_{2,\alpha})\colon \alpha\geqslant -m\}.
%\]
%Again, since for $1<p<\infty$,
%Since $E^2 _0$ is reflexive, thanks to \cite[Theorem V.1.5.12]{amann}, we
%have
%\[
%E^2 _{-\alpha}=(E^2 _{\alpha})^{\ast}.
%\]

%We recall that, if $X^{1}=E^2 _0$ and $X^{0}=E^2 _{-1}$, choosing
%$\epsilon=\frac12 $ and $\gamma(\epsilon)=\frac34$ we have
%\[
%X^{\gamma(\epsilon)} = E^2 _{-\frac14} \quad\hbox{ and } \quad X^{1+\epsilon
%}=E^2 _{\frac12}.
%\]

\begin{theorem}
\label{non-linearity-ns} For each $N>0$, the map $G_N\colon X^{1+\epsilon}
\to X^{\gamma(\epsilon)}$ given by 
\begin{equation}  \label{defGN}
G_N(u)\phi = \int_\Omega F_N(u) u\otimes u\cdot \nabla \phi \quad \hbox{ for
} u\in X^{1+\epsilon} \hbox{ and } \phi \in V,
\end{equation}
is a well-defined $\frac38$-regular map relative to the pair $(X^1,X^0)$,
and we can choose any $\rho\in (1,\frac43]$. %
%$F_{N}(\cdot)G_{-1}(\cdot)$
%is a $\frac12$-regular map relative to the pair $(E^2 _0,E^2 _{-1})$, and
%we can choose any $\rho>1$.
%and it is \emph{ultra subcritical}. In particular, it is $\epsilon$-regular for $\epsilon\in [\frac12,\frac{3}{4})$ (we can always take $\rho = \frac43$).
%
\end{theorem}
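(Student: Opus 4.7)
The plan is to reduce both defining inequalities \eqref{epslips} and \eqref{epsbound} of an $\epsilon$-regular map to estimates on $\|F_N(u) u \otimes u\|_{L^2}$, which are already supplied by Lemma \ref{lemmaEstFN}. The two ingredients that make this reduction work are the Sobolev embedding $X^{1+\epsilon} = H_{3/8} \hookrightarrow L^4$ (Proposition \ref{embbeding-scales-NSE} applied with $\alpha = 3/8$, which yields $q = 6/(3 - 3/2) = 4$), and the duality identification $X^{\gamma(\epsilon)} = H_{-1/2} = V^*$ recorded after \eqref{defX}, which is the content of \eqref{eq.dual}.

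First I would verify that the formula \eqref{defGN} actually defines an element of $V^* = X^{\gamma(\epsilon)}$. For $u \in X^{1+\epsilon}$ the matrix $F_N(u)\,u\otimes u$ lies in $L^2(\Omega)^{3\times 3}$ by Lemma \ref{lemmaEstFN}. For every $\phi \in V$ a Frobenius Cauchy--Schwarz bound on the integrand then gives
\[
|G_N(u)\phi| \leqslant \|F_N(u)\,u\otimes u\|_{L^2} \|\nabla\phi\|_{L^2} = \|F_N(u)\,u\otimes u\|_{L^2} \|\phi\|_V,
\]
so $\phi\mapsto G_N(u)\phi$ is bounded on $V$ and $\|G_N(u)\|_{X^{\gamma(\epsilon)}} \leqslant \|F_N(u)\,u\otimes u\|_{L^2}$.

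Second, feeding the two inequalities of Lemma \ref{lemmaEstFN} into this bound and invoking the embedding $X^{1+\epsilon}\hookrightarrow L^4$ with embedding constant $C$, I get
\[
\|G_N(u)\|_{X^{\gamma(\epsilon)}} \leqslant N\|u\|_{L^4} \leqslant CN\|u\|_{X^{1+\epsilon}}, \qquad \|G_N(u)-G_N(v)\|_{X^{\gamma(\epsilon)}} \leqslant 3N\|u-v\|_{L^4} \leqslant 3CN\|u-v\|_{X^{1+\epsilon}}.
\]
These are stronger than what \eqref{epslips} and \eqref{epsbound} require: the Lipschitz inequality \eqref{epslips} follows by overestimating $1\leqslant 1 + \|u\|_{X^{1+\epsilon}}^{\rho-1} + \|v\|_{X^{1+\epsilon}}^{\rho-1}$, and \eqref{epsbound} follows from the elementary inequality $r \leqslant 1 + r^\rho$ ($r\geqslant 0$, $\rho\geqslant 1$).

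Finally I would check compatibility with the standing constraints $\rho > 1$ and $\rho\epsilon \leqslant \gamma(\epsilon) < 1$ from the definition of $\mathcal{F}(\epsilon,\rho,\gamma(\epsilon))$. With $\epsilon = 3/8$ and $\gamma(\epsilon) = 1/2$, the inequality $\rho\epsilon\leqslant\gamma(\epsilon)$ amounts to $\rho \leqslant 4/3$, giving the admissible range $\rho \in (1,4/3]$ claimed in the statement. There is no real obstacle: the hardest part is only to make sure that the canonical $V$--$V^*$ pairing is consistent with the integral formula for $G_N(u)$, which is exactly the identification \eqref{eq.dual}, while every analytic step beyond that is either Cauchy--Schwarz or an invocation of Lemma \ref{lemmaEstFN} and Proposition \ref{embbeding-scales-NSE}.
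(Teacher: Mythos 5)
Your argument is correct and follows essentially the same route as the paper: Cauchy--Schwarz in the pairing to reduce everything to $\|F_N(u)\,u\otimes u\|_{L^2}$, then Lemma \ref{lemmaEstFN} together with the embedding $X^{1+\epsilon}=H_{\frac38}\subset L^4$ from Proposition \ref{embbeding-scales-NSE}. Your explicit verification that $\rho\epsilon\leqslant\gamma(\epsilon)$ forces $\rho\leqslant\frac43$ is a small addition the paper leaves implicit, but otherwise the two proofs coincide.
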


\begin{proof}
Since $X^{1+\epsilon} \subset L^4$, for $u\in X^{1+\epsilon}$ and $\phi\in
V$, using Lemma \ref{lemmaEstFN} 
%we can choose a sequence $\{\phi_n\}_{n\in \mathbb{N}}$ of smooth functions from $\Omega$ to $\mathbb{R}^3$, with compact support in $\Omega$ and $\mathrm{div} \ \phi_n = 0$ for all $n\in \mathbb{N}$, that converges to  $\phi$ in the $W^{1,\frac32}(\Omega)^3$-norm. Hence
%\[
%\int_\Omega G_N(u)\phi_n = -\int_\Omega F_N(u)u\otimes u\cdot \nabla \phi_n \hbox{ for all } n\in \mathbb{N}, 
%\]
%from Lemma \ref{strong-to-weak}. But
%\begin{align*}
%\left| \int_\Omega G_N(u)\phi_n - \int_\Omega G_N(u)\phi \right|
%\end{align*}
%and
%\begin{align*}
%\left| \int_\Omega F_N(u)u\otimes u \cdot \nabla \phi_n - \int_\Omega F_N(u)u\otimes u \cdot \nabla \phi \right| \leqslant \|F_N(u)u\otimes u\|_{L^3}\|\phi_n-\phi\|_{W^{1,\frac32}}.
%\end{align*}
we have 
\begin{align*}
|G_N(u)\phi| & = \Big|\int_{\Omega} F_{N}(u)u\otimes u \cdot\nabla\phi\Big| %
\leqslant\|F_{N}(u)u\otimes u\|_{L^{2}}\|\phi\|_V \\
& \leqslant N \|u\|_{L^4}\|\phi\|_V\leqslant Nk \|u\|_{H_\frac38}\|\phi\|_{V},
\end{align*}
where $k$ is the embedding constant of $%
H_{\frac38}$ into $L^4$. Then, we obtain 
\begin{equation*}
|G_N(u)\phi| \leqslant Nk\|u\|_{X^{1+\epsilon}}\|\phi\|_{V}, 
\end{equation*}
and thus 
\begin{equation*}
\|G_N(u)\|_{X^{\gamma(\epsilon)}} \leqslant Nk\|u\|_{X^{1+\epsilon}}. 
\end{equation*}

Again, from Lemma \ref{lemmaEstFN}, we obtain 
\begin{align*}
|G_N(u) \phi- G_N(v)\phi| & = \Big|\int_{\Omega}(F_{N}(u)u\otimes u
-F_{N}(v)v\otimes v)\cdot \nabla \phi\Big| \\
& \leqslant\|F_{N}(u)u\otimes u - F_{N}(v)v\otimes v\|_{L^{2}}\|\phi
\|_{H^1_0} \\
& \leqslant3N \|u-v\|_{L^4}\|\phi\|_{V} \leqslant
3Nk\|u-v\|_{X^{1+\epsilon}}\|\phi\|_{V}
\end{align*}
and, therefore 
\begin{equation*}
\|G_N(u)-G_N(v)\|_{X^{\gamma(\epsilon)}} \leqslant
3Nk\|u-v\|_{X^{1+\epsilon}}, 
\end{equation*}
and the result is complete.
\end{proof}

\subsection{The very weak formulation of the GMNSE}

Let us reformulate the GMNSE in a \textit{very weak context}. In the
framework of scales of Banach spaces, that consists basically in a
translation to the extrapolated scale.

Projecting the first equation of \eqref{NavierModified} into $H$,
formally multiplying it by a smooth function $\phi\colon \Omega\to \mathbb{R}%
^3$ with compact support in $\Omega$ and $\mathrm{div} \phi=0$, and
integrating over $\Omega$, we obtain 
\begin{align*}
& \frac{d}{dt}\int_{\Omega}u\cdot\phi=\int_{\Omega}A u\cdot\phi
-\int_{\Omega}F_N(u) (u\cdot \nabla)u \cdot \phi+
\int_{\Omega}f_{\sigma}\cdot\phi,
\end{align*}
where $f_\sigma = Pf$. 
%Now, since $\phi\in E^2 _0$ we have $P_{2}\phi=\phi$. Furthermore,
We have 
\begin{equation*}
\int_\Omega A u\cdot\phi= \int_\Omega\Delta u\cdot\phi= \int_\Omega
u\cdot\Delta\phi = \int_\Omega u\cdot A\phi. 
\end{equation*}

Now we relate the tensor product $u\otimes u$ with $(u\cdot \nabla) u$. To
that end, we define 
\begin{equation*}
\mathrm{div} (u\, u) = \tfrac{\partial}{\partial x_1} (u_1 u)+\tfrac{\partial}{\partial x_2} (u_2 u) + \tfrac{%
\partial}{\partial x_3} (u_3 u) =(\mathrm{div} (u_1 u), \mathrm{div} (u_2u), \mathrm{div}
(u_3 u)), 
\end{equation*}
and note that, if $\mathrm{div}\, u=0$, we have 
\begin{equation}  \label{divuu}
(u\cdot\nabla) u = \tfrac{\partial}{\partial x_{1}} (u_{1} u)+ \tfrac{\partial}{\partial x_{2}} (u_{2} u)+ \tfrac{%
\partial}{\partial x_{3}} (u_{3} u) - \underbrace{\left( \tfrac {\partial}{%
\partial x_{1}} u_{1}+ \tfrac{\partial}{\partial x_{2}} u_{2}+ \tfrac{\partial}{\partial x_{3}} u_{3}\right) u%
}_{=0} = \mathrm{div} (u\, u).
\end{equation}

\begin{lemma}
\label{strong-to-weak} Let $u,\phi\colon\Omega\to \mathbb{R}^3$ be smooth
functions with compact support in $\Omega$ and $\mathrm{div} \ \phi=\mathrm{%
div} \ u=0$. Then 
\begin{equation*}
\int_{\Omega}(u\cdot\nabla) u\cdot\phi= - \int_{\Omega}u\otimes u \cdot
\nabla\phi, 
\end{equation*}
%where, in the last term, we use the notation of matrix scalar product, that is, if $A,B$ are $N\times N$ matrices, then, $A\cdot B=\sum_{i,j=1}^{n} a_{ij}b_{ij}$.
and the same holds for $u\in V$. In the last term, we mean the scalar product for matrices.
\end{lemma}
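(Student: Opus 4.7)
The identity will follow from the already-established \eqref{divuu} together with a componentwise integration by parts, and the extension to $u\in V$ is a density argument using the three-dimensional Sobolev embedding $V\hookrightarrow L^4(\Omega)^3$.

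For the smooth case, since $\mathrm{div}\,u=0$, I would invoke \eqref{divuu} to rewrite $(u\cdot\nabla)u = \mathrm{div}(uu)$, whose $i$-th component is $\sum_{j=1}^3 \partial_j(u_iu_j)$. Pairing with $\phi$ and integrating gives
\begin{equation*}
\int_{\Omega}(u\cdot\nabla)u\cdot\phi \;=\; \sum_{i,j=1}^3 \int_\Omega \partial_j(u_iu_j)\,\phi_i \;=\; -\sum_{i,j=1}^3 \int_\Omega u_iu_j\,\partial_j\phi_i,
\end{equation*}
where integration by parts produces no boundary term because $\phi$ has compact support in $\Omega$ (the divergence-free hypothesis on $\phi$ is not used at this step, only on $u$). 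Since $(u\otimes u)_{ij}=u_iu_j$ is symmetric in $i,j$, the remaining sum coincides with $\int_\Omega u\otimes u\cdot\nabla\phi$ regardless of the row/column convention adopted for $\nabla\phi$, yielding the stated equality.

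For $u\in V$, fixing $\phi\in\mathcal{V}$ I would approximate $u$ by a sequence $u_n\in\mathcal{V}$ with $u_n\to u$ in $V$. By the Sobolev embedding $V\hookrightarrow H^1_0(\Omega)^3 \hookrightarrow L^4(\Omega)^3$ (valid in dimension three), we have $u_n\to u$ in $L^4$, and hence $u_n\otimes u_n \to u\otimes u$ in $L^2$ via the elementary bilinear estimate $\|a\otimes a-b\otimes b\|_{L^2}\leqslant (\|a\|_{L^4}+\|b\|_{L^4})\|a-b\|_{L^4}$. This is enough to pass to the limit on the right-hand side since $\nabla\phi\in L^\infty$. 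On the left, writing $(u_n\cdot\nabla)u_n\cdot\phi = \sum_j u_{n,j}(\partial_j u_n)\cdot\phi$, one factor converges strongly in $L^6$, another (the gradient) converges in $L^2$, and $\phi\in L^\infty$, so the integral also passes to the limit.

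I do not anticipate a serious obstacle. The smooth case is essentially integration by parts, and the density step only invokes the three-dimensional embedding $V\hookrightarrow L^4$ together with the continuity of the tensor product as a bilinear map $L^4\times L^4 \to L^2$. The only point requiring mild care is the symmetry of $u\otimes u$, which eliminates any ambiguity in the convention for $\nabla\phi$.
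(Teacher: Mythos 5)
Your argument is correct and follows essentially the same route as the paper: rewrite $(u\cdot\nabla)u$ as $\mathrm{div}(u\,u)$ using \eqref{divuu}, integrate by parts componentwise with the boundary term vanishing by compact support, and conclude for $u\in V$ by density. The only difference is that you spell out the density step (via $V\hookrightarrow L^4$ and the bilinear continuity of the tensor product), which the paper leaves as ``a simple density argument''; this is a welcome addition but not a different method.
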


\begin{proof}
By \eqref{divuu} and the Divergence Theorem we have 
\begin{align*}
\int_{\Omega} & (u\cdot\nabla) u\cdot\phi = \int_{\Omega}\mathrm{div} ( u\,
u) \cdot\phi = \int_{\Omega}(\mathrm{div} (u_1 u), \mathrm{div} (u_2 u), \mathrm{div} (u_3 u))
\cdot(\phi_{1},\phi_2, \phi_3) \\
& =- \int_{\Omega} \big(u_1 u \cdot\nabla\phi_1+ u_2 u \cdot\nabla\phi_2+ (u_3 u)\cdot
\nabla\phi_3\big) \\
& \qquad \qquad + \int_{\partial \Omega} \underbrace{\big( \phi_1(u_
1u)_{\partial \Omega}+ \phi_2(u_2 u)_{\partial \Omega}+ \phi_3(u_3 u)_{\partial \Omega}\big)}_{=0}
d\sigma \\
& =-\int_{\Omega}[u_1 u\ \ u_2u \ \ u_3 u]\cdot[\nabla\phi_1\ \ \nabla \phi_2 \ \ 
\nabla\phi_3] \\
& =-\int_{\Omega} 
\begin{bmatrix}
u_1 u_1 & u_1u_2 & u_1 u_3 \\ 
u_2 u_1 & u_2u_2 & u_2u_3 \\ 
u_3 u_1 & u_3u_2 & u_3 u_3%
\end{bmatrix}
\cdot 
\begin{bmatrix}
\frac{\partial\phi_1}{\partial x_1} & \frac{\partial\phi_2}{\partial x_1} & \frac{\partial\phi_3}{%
\partial x_1} \\ 
\frac{\partial\phi_1}{\partial x_2} & \frac{\partial\phi_2}{\partial x_2} & \frac{\partial\phi_3}{\partial x_2} \\ 
\frac{\partial\phi_1}{\partial x_3} & \frac{\partial\phi_2}{\partial x_3} & \frac{\partial\phi_3}{%
\partial x_3}%
\end{bmatrix}
=- \int_{\Omega}u\otimes u \cdot\nabla\phi,
\end{align*}
where, for each $i=1,2, 3$, the term $(u_i u)_{\partial \Omega}$ is the
outward normal component of $u_i u$ in $\partial \Omega$. The last claim
follows from a simple density argument.
\end{proof}

From this lemma, formally we have 
\begin{equation*}
-\int_\Omega F_N(u)(u\cdot \nabla)u \cdot \phi= \int_{\Omega} F_N(u)
u\otimes u \cdot\nabla\phi, 
\end{equation*}
and, in this way, we obtain the \textit{very weak formulation} of the
globally modified Navier-Stokes equation 
\begin{equation}  \label{WNSE}
\frac{d}{dt}\int_\Omega u\cdot \phi =\int_\Omega u\cdot A\phi + \int_\Omega
F_N(u) u\otimes u\cdot \nabla\phi +\int_\Omega f_\sigma \cdot \phi, \quad
t>0.
\end{equation}

Therefore, \eqref{NavierModified} can be written in $H_{-1}$ as 
\begin{equation}  \label{VeryWeakNSE}
\left\{ \begin{aligned} & u_t = A_{-1}u + G_N(u) + f_{\sigma,-1}, \quad t>0
\\ & u(0)=u_0, \end{aligned} \right.
\end{equation}
where $A_{-1}$ is defined in \eqref{defA-1}, $G_N$ is defined in %
\eqref{defGN} and $f_{\sigma,-1} \in H_{-1}$ is the functional 
\begin{equation*}
H_1 \ni\phi\mapsto f_{\sigma,-1} \phi= \int_{\Omega}f_{\sigma} \cdot\phi. 
\end{equation*}

%Hence, to fully use the context of the Banach scale, for each $u\in H$ we define the functionals $A_{2,-1}u,\,
%G_{-1}(u),\, f_{\sigma,-1} \in E^2 _{-1}$ as elements of $E^2 _{-1}%
%=(E^2 _{1})^{\ast}$, which satisfy
%\begin{align*}
%&  E^2 _{1} \ni\phi\mapsto A_{2,-1} u \phi=\int_{\Omega}u \cdot \Delta\phi,\\
%&  E^2 _{1}\ni\phi\mapsto F_{N}(u)G_{-1}(u) \phi= F_{N}(u)\int_{\Omega
%}u\otimes u\cdot\nabla\phi,\\
%&  E^2 _{1} \ni\phi\mapsto f_{\sigma,-1} \phi= \int_{\Omega}f_{\sigma}%
%\cdot\phi,
%\end{align*}
%that is, we are considering each term in \eqref{ANSE} as a linear functional
%defined in $E^2 _{1}$ and taking values in $\mathbb{R}$.
%In this way, we have found $A_{2,-1}$, with $D(A_{2,-1})=E^2 _0$,
%\[
%A_{2,-1}\colon E^2 _0\subset E_{-1}^2  \to E_{-1}^2 ,
%\]
%and that operator is sectorial (see \cite{VonWhal}).

Once we arrive at this point, the local well posedness of %
\eqref{NavierModified} in $H$ follows from the fact that $G_N$ is a $\frac38$-regular map relative to the pair $(X^1,X^0)=(H,H_{-1})$, given that $f_{\sigma,-1}$ is a fixed vector in $H_{-1}$.

Observe that, for each $u_0\in H$ the $\frac38$-regular mild solution $%
u(\cdot,u_0)$ obtained in Theorem \ref{locexist} satisfies the variation of
constants formula associated to \eqref{VeryWeakNSE}, that is 
\begin{equation*}
u(t,u_0)=e^{A
t}u_0+\int_0^te^{A(t-s)}G_N(u(s,u_0))ds+\int_0^te^{A(t-s)}f_{\sigma,-1}ds. 
\end{equation*}
We make the following assumption 
%\begin{equation}  \label{assumpf1}
%f_{\sigma,-1}\in H_{-\alpha} \quad \hbox{ for some } \alpha \in [0,\tfrac12],
%\end{equation}
\begin{equation}  \label{assumpf1}
f_{\sigma,-1}\in H_{-\frac12},
\end{equation}
and note that 
\begin{equation}  \label{GlobalExistence}
\begin{aligned} \|u(t,u_0)&\|_{H_\frac38}\leqslant \|e^{A
t}\|_{\mathcal{L}(H,H_{\frac38})}\|u_0\|_H +\int_0^t \|e^{A
(t-s)}\|_{\mathcal{L}(H_{-\frac12},H_{\frac38})}
\|G_N(u(s,u_0)\|_{H_{-\frac12}} ds\\ & \qquad \qquad + \int_0^t \|e^{A
(t-s)}\|_{\mathcal{L}(H_{-\frac12},H_{\frac38})}\|f_{\sigma,-1}\|_{H_{-%
\frac12}} ds,\\ &\leqslant Mt^{-\frac38}\|u_0\|_H
+8Mt^{\frac18}\|f_{\sigma,-1}\|_{H_{-\frac12}}+MN
k \int_0^t (t-s)^{-\frac78}\|u(s,u_0)\|_{H_{\frac38}} ds. \end{aligned}
\end{equation}
Hence, it follows by the singular Gr\"onwall inequality (Lemma \ref%
{Lgronwall}) that $u(\cdot,u_0)$ does not blow up in finite time in $%
H_{\frac38}$ and, in particular, it must exist for all time $t\geqslant 0$.

Therefore, for each $N>0$, provided \eqref{assumpf1} holds, problem %
\eqref{VeryWeakNSE} defines a semigroup $S_{N}=\{S_{N}(t)\colon t\geqslant0\}
$ in $H$, given by $S_{N}(t)u_0=u(t,u_0)$ where, for each $u_0\in H$, $%
[0,\infty)\ni t \mapsto u(t,u_0)\in H$ is the unique $\frac38-$regular
solution of \eqref{VeryWeakNSE}. Moreover, from the above estimate, $S_{N}(t)
$ a compact map for each $t>0$.

Hence, from Theorems \ref{locexist} and \ref{theorem.Regular}, the above
computations and Lemma \ref{AbsorbingN} below, we obtain Theorem \ref{Main1}.

\subsection{Global attractor in $H$ for the GMNSE}

To prove existence of a global attractor $\mathcal{A}_{N}$ for the semigroup 
$S_{N}$ associated with \eqref{VeryWeakNSE}, it suffices to show that there
exists a bounded absorbing set in $H$, since $S_N(t)$ is compact for each $%
t>0$. This has already been proved in \cite{CKR} when $f\in L^2(\Omega)^3$. When $f_{\sigma,-1}\in H_{-\frac12}$ it follows that
\begin{equation*}
\frac{1}{2}\frac{d}{dt}\| u\|_H^2+\nu\| u\|_{H_{\frac12}}^2
=\langle u,f_{\sigma,-1}\rangle_{\frac12,-\frac12}, 
\end{equation*}
which implies%
\begin{equation*}
\frac{1}{2}\frac{d}{dt}\| u\|_H^2 +\nu\| u\|_{V}^2 \leqslant \|
u\|_V \|f_{\sigma,-1}\|_{H_{-\frac12}}\leqslant \frac{\nu}{2}\| u\|_{V}^2 +%
\frac{1}{2\nu}\| f_{\sigma,-1}\|_{H_{-\frac12}}^2.
\end{equation*}
%w, and
Consequently 
\begin{equation*}
\frac{1}{2}\frac{d}{dt}\| u\|_H^2 + \frac{\lambda_{1}\nu}{2}\|u\|_H^2
\leqslant \frac{1}{2}\frac{d}{dt}\|u\|_H^2 + \frac{\nu}{2}\|
u\|_{H_{\frac12}}^2 \leqslant \frac{1}{2\nu}\| f_{\sigma,-1}\|_{H_{-\frac12}}^2,
\end{equation*}
where $\lambda_{1}>0$ is the first eigenvalue of the Stokes operator $A$. This expression immediately gives us
\begin{equation}  \label{IneqAbsorb}
\|S_{N}(t)u_0\|_H^2\leqslant\|u_0\|_H^2e^{-\nu\lambda_{1} t}+\frac{%
\|f_{\sigma,-1}\|_{H_{-\frac12}}^2}{\lambda_1\nu^2 },
\end{equation}
for every $N>0$. Hence, the ball 
\begin{equation}  \label{def.B0}
B_0=\left\{u\in H \colon\|u\|_H^2\leqslant 1+\frac{\|f_{\sigma,-1}\|_{H_{-\frac12}}^2}{%
\lambda_1\nu^2}\right\}
\end{equation}
uniformly absorbs bounded subsets of $H$ under the action of the semigroup $%
S_{N}$, and we point out that $B_0$ is independent of $N$. More precisely,
we obtain immediately the following result.

\begin{lemma}
\label{AbsorbingN} Assume that $f_{\sigma,-1}\in H_{-\frac12}$. Then, for any bounded
set $B\subset H$ there exists a time $T_{B}>0$ such that 
\begin{equation*}
S_{N}(t)u_0 \in B_0 \quad\hbox{ for all } t\geqslant T_{B}, \ u_0\in B %
\hbox{ and } N>0. 
\end{equation*}
Therefore, for each $N>0$, the semigroup $S_N$ has a global attractor $%
\mathcal{A}_N\subset B_0$.
\end{lemma}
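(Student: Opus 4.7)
The plan is to read off the result directly from the energy estimate \eqref{IneqAbsorb} that was just established, then invoke the standard criterion for existence of a global attractor.

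First I would unpack the absorbing property. Given a bounded set $B\subset H$, set $R=\sup_{u_0\in B}\|u_0\|_H<\infty$. From \eqref{IneqAbsorb} we have
\begin{equation*}
\|S_N(t)u_0\|_H^2\leqslant R^2 e^{-\nu\lambda_1 t}+\frac{\|f_{\sigma,-1}\|_{H_{-\frac12}}^2}{\lambda_1\nu^2},
\end{equation*}
for every $u_0\in B$, $t\geqslant 0$ and $N>0$. The right-hand side is at most $1+\frac{\|f_{\sigma,-1}\|_{H_{-\frac12}}^2}{\lambda_1\nu^2}$ (the defining bound for $B_0$) as soon as $R^2 e^{-\nu\lambda_1 t}\leqslant 1$. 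Taking
\begin{equation*}
T_B=\max\Bigl\{0,\tfrac{2\log R}{\nu\lambda_1}\Bigr\},
\end{equation*}
which depends only on $B$ and \emph{not} on $N$, yields $S_N(t)u_0\in B_0$ for every $t\geqslant T_B$, $u_0\in B$ and $N>0$. This is the first assertion of the lemma, and the main point to verify is precisely the $N$-independence, which is automatic because the constant in \eqref{IneqAbsorb} never involves $N$ (the term $F_N(u)(u\cdot\nabla)u$ drops out in the energy identity thanks to divergence-freeness).

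For the second assertion, I would appeal to the standard existence theorem for global attractors of continuous semigroups (as in Temam or Hale): for fixed $N>0$, the semigroup $S_N$ on $H$ possesses a bounded absorbing set (namely $B_0$, by the first part applied to each bounded $B$) and is \emph{asymptotically compact} because $S_N(t)$ is a compact operator on $H$ for every $t>0$. This compactness was already established in the paragraph preceding the lemma as a consequence of the variation-of-constants formula and the smoothing estimate used in \eqref{GlobalExistence}. These two ingredients together give the existence of a global attractor $\mathcal{A}_N$, which is characterized as the $\omega$-limit of $B_0$ and therefore is contained in $B_0$.

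In short, no new analysis is needed: the energy estimate \eqref{IneqAbsorb} provides the uniform (in $N$) absorbing ball, and the compactness of $S_N(t)$ supplies the missing hypothesis of the classical attractor theorem. The only mildly delicate point is bookkeeping the fact that $B_0$ absorbs uniformly in $N$, which will be crucial later in Section \ref{weak-ga} for the construction of the weak global attractor $\mathcal{A}$.
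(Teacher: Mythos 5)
Your proposal is correct and follows essentially the same route as the paper: the uniform-in-$N$ absorption is read off directly from \eqref{IneqAbsorb} by choosing $T_B$ so that $R^2e^{-\nu\lambda_1 t}\leqslant 1$, and the global attractor $\mathcal{A}_N\subset B_0$ then follows from the compactness of $S_N(t)$ for $t>0$ together with the standard existence theorem. The only cosmetic remark is that the lemma asserts $T_B>0$, so one should take, say, $T_B=\max\{1,\tfrac{2\log R}{\nu\lambda_1}\}$; this changes nothing of substance.
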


The regularity of the solutions (uniformly in bounded sets, see %
\eqref{GlobalExistence}) also promptly gives a bounded absorbing set in $%
H_{\frac38}$, but the latter depends on $N$. The global attractor $\mathcal{A%
}_N$ attracts bounded subsets of $H$ in the $H_{\frac38}$-norm, or even
stronger norms.

\section{Weak global attractors in \texorpdfstring{$H$}{H} for the
Navier-Stokes equation}

\label{weak-ga}
We will assume throughout this section that $f_{\sigma,-1}\in H_{-\frac12}$.
Let us recall the definition of the set $\mathcal{A}\subset H$ given in the
introduction. 
\begin{equation*}
\mathcal{A} = \left\{ 
\begin{aligned}
 y\in H \colon \hbox{ there are sequences } & t_j\stackrel{j\to\infty}{\longrightarrow}\infty,\ \{u_0^j\}_{j\in \mathbb{N}} \subset B_0\\
  \hbox{ and } N_j\stackrel{j\to\infty}{\longrightarrow} \infty
&\text{ such that }S_{N_j}(t_j)u_0^j\to y\text{ weakly in } H.
\end{aligned}
\right\}. 
\end{equation*}

Clearly $\mathcal{A}$ is nonempty, since for any given sequences $t_j\overset%
{j\to\infty}{\longrightarrow}\infty$, $\{u_0^j\}_{j\in \mathbb{N}}\subset B_0
$ and $N_j\overset{j\to\infty}{\longrightarrow} \infty$ there exists $%
j_{1}\in\mathbb{N}$ such that $S_{N_j}(t_j)u_0^j\in B_0$ for $j\geqslant
j_{1}$ (see Lemma \ref{AbsorbingN}), and hence there exists $y\in H$ such
that, up to a subsequence, $S_{N_j}(t_j)u_0^j\overset{j\to\infty}{%
\longrightarrow} y$ weakly in $H$. Thus, $y\in \mathcal{A}\neq\varnothing$.
Moreover, it is clear that $\mathcal{A}\subset B_0$.

\begin{definition}[Weak solution of \eqref{eq:NS}]
\label{def.weaksol} We say that the function 
\begin{equation*}
u\in L^\infty(0,T;H)\cap L^2(0,T;V)\quad \hbox{ with } \quad \frac{du}{dt}
\in L^1(0,T;V^*) 
\end{equation*}
is a weak solution of \eqref{eq:NS} on $(0,T)$, if 
\begin{equation}  \label{def_wsnse}
\frac{d}{dt}\int_\Omega u\cdot \phi =\int_\Omega A^\frac{1}{2}u\cdot A^\frac{%
1}{2}\phi + \int_\Omega u\otimes u\cdot \nabla\phi +\int_\Omega f_\sigma
\cdot \phi, \quad \hbox{ for all } \phi\in V \hbox{ and } t>0.
\end{equation}
%\begin{equation}\label{def_wsnse}
%\frac{d}{dt}(u,v)+ 
%\nu((u,v)) +
% b(u,u,v)
% = \langle f,v\rangle,\ \forall v\in V
%\end{equation}
%If $f_\sigma\in L^2(0,T; V^*)$ and $u$ satisfies \eqref{def_wsnse}, then $u\in C([0,T],H)$, $\frac{du}{dt} \in L^\frac{4}{3} (0,T,H)$. In particular the initial condition $u(0)=u_0$ makes sense for any $u_0\in H$.
We recall that the set of all weak solutions of \eqref{eq:NS} for all times
will be denoted by $\mathcal{K}$.
\end{definition}

For any $u_0\in H$ and $f_\sigma\in L^2(0,T; V^*)$ there exists at least one
weak solution of \eqref{eq:NS} (see \cite{Temam77}). %\begin{definition}
%\label{KN} We say that $u\in\mathcal{KN}$ if $u\in\mathcal{K}$ and one of the
%following conditions holds:
%
%\begin{enumerate}
%\item $u(0)\in\mathcal{A}$ and there exist $t_j\stackrel{j\to\infty}{\longrightarrow}\infty$, $\{u_0%
%^j\}_{j\in \mathbb{N}}\subset B_0$ and $N_j\stackrel{j\to\infty}{\longrightarrow} \infty$ such that for any $t_0\geqslant0$ and $[0,\infty)\ni s_j\stackrel{j\to\infty}{\longrightarrow}  t_0$ we have%
%\[
%S_{N_j}(s_j+t_j)u_0^j\stackrel{j\to\infty}{\longrightarrow}  u(t_0)\text{ weakly in }%
%{H}.
%\]
%%
%\item $u(0)\in{H}\setminus\mathcal{A}$ and there exists $N_j\stackrel{j\to\infty}{\longrightarrow} \infty$ such that for any $t_0\geqslant0$ and $[0,\infty)\ni
%s_j\stackrel{j\to\infty}{\longrightarrow}  t_0$ we have%
%\[
%S_{N_j}(s_j)u(0)\stackrel{j\to\infty}{\longrightarrow}  u(t_0)\text{ weakly in }{H}.
%\]
%\end{enumerate}
%\end{definition}
%
%We recall the following lemma, which was proved in \cite[Theorem 13]{CKR}.
We recall the following lemma, which is proved exactly as in \cite[Theorem 13]{CKR}.

\begin{lemma}
\label{Converg} For any sequence $u_0^j\to u_0$ weakly in ${H}$, there
exists a sequence $N_j\overset{j\to\infty}{\longrightarrow} \infty$ and a $%
u\in \mathcal{K}$ such that, along a subsequence, for all $T>0$ we have%
\begin{align*}
S_{N_j}(\cdot)u_0^j & \overset{j\to\infty}{\longrightarrow} u(\cdot)\text{
weakly star in }L^{\infty}(0,T;{H}), \\
S_{N_j}(\cdot)u_0^j & \overset{j\to\infty}{\longrightarrow} u(\cdot)\text{
weakly in }L^2 (0,T;H_{\frac12}), \\
S_{N_j}(\cdot)u_0^j & \overset{j\to\infty}{\longrightarrow} u(\cdot)\text{
strongly in }L^2 (0,T;{H}).
\end{align*}
\end{lemma}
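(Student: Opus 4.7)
The plan is to follow the compactness-and-pass-to-the-limit strategy of \cite{CKR}, adapted to our scale-of-spaces formulation. Write $u^j(\cdot)=S_{N_j}(\cdot)u_0^j$ for the sequence of classical solutions of the GMNSE with viscosity parameter $N_j$ and initial datum $u_0^j$. Since $u_0^j\to u_0$ weakly in $H$, the sequence $\{u_0^j\}$ is bounded in $H$, so by the energy estimate \eqref{IneqAbsorb} and its standard differential version from the absorbing-set computation, $\{u^j\}$ is bounded in $L^\infty(0,T;H)\cap L^2(0,T;H_{\frac12})$ uniformly in $j$, for every $T>0$. This already yields subsequences satisfying the first two convergences in the statement.

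Next I would derive a uniform bound for $\tfrac{d}{dt}u^j$ in a negative space, strong enough to invoke Aubin--Lions. From the very weak formulation \eqref{VeryWeakNSE}, $\tfrac{d}{dt}u^j=A_{-1}u^j+G_{N_j}(u^j)+f_{\sigma,-1}$. The linear part and the forcing are under control via the uniform $L^2(0,T;H_{\frac12})$ bound; for the nonlinear piece, Lemma \ref{lemmaEstFN} together with $F_{N_j}(u^j)\|u^j\|_{L^4}\leqslant N_j$ is not uniform in $j$, but the sharper bound $\|F_{N_j}(u^j)u^j\otimes u^j\|_{L^2}\leqslant \|u^j\|_{L^4}^2$ (valid whenever $F_{N_j}(u^j)\leqslant 1$) combined with $\|u^j\|_{L^4}^2\leqslant C\|u^j\|_H^{1/2}\|u^j\|_V^{3/2}$ gives $\{G_{N_j}(u^j)\}$ bounded in $L^{4/3}(0,T;V^*)$ uniformly in $j$. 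Hence $\{\tfrac{d}{dt}u^j\}$ is uniformly bounded in $L^{4/3}(0,T;V^*)$. Aubin--Lions with the triple $V\hookrightarrow\hookrightarrow H\hookrightarrow V^*$ then yields a subsequence with $u^j\to u$ strongly in $L^2(0,T;H)$, extracting a further a.e. subsequence in $(0,T)\times\Omega$.

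With these three convergences plus a.e. convergence, the remaining task is to pass to the limit in the weak formulation \eqref{WNSE} tested against an arbitrary $\phi\in V$. The linear term handles itself by the weak $L^2(0,T;H_{\frac12})$ convergence, and the forcing is independent of $j$. The main obstacle, as usual, is the nonlinear term $\int_\Omega F_{N_j}(u^j)\,u^j\otimes u^j\cdot\nabla\phi$. For this I would argue in two steps. First, interpolating strong $L^2(0,T;H)$ convergence with boundedness in $L^2(0,T;H_{\frac12})\hookrightarrow L^2(0,T;L^6)$, a standard interpolation gives strong convergence $u^j\to u$ in $L^2(0,T;L^4)$, so $u^j\otimes u^j\to u\otimes u$ in $L^1(0,T;L^2)$. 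Second, I need $F_{N_j}(u^j)\to 1$: since $\|u^j(t)\|_{L^4}$ is finite for a.e. $t$ (because $u^j\in L^2(0,T;L^4)$), and $N_j\to\infty$, the definition $F_{N_j}(r)=\min\{1,N_j/r\}$ forces $F_{N_j}(u^j(t))\to 1$ for a.e. $t$; combined with the uniform bound $F_{N_j}\leqslant 1$, dominated convergence yields $F_{N_j}(u^j)u^j\otimes u^j\to u\otimes u$ in $L^1(0,T;L^2)$, and integration against $\nabla\phi\in L^2$ passes to the limit.

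The hard part is the nonlinear term, specifically the combination of two limits: sending $N_j\to\infty$ inside the truncation $F_{N_j}$ while simultaneously upgrading the compactness of $u^j$ from $L^2(0,T;H)$ to $L^2(0,T;L^4)$. Both are essentially routine in 3D given the energy bounds, but care is needed so that the uniform-in-$j$ estimate of $G_{N_j}(u^j)$ used for Aubin--Lions does not rely on a bound that blows up with $N_j$. Once the passage to the limit is carried out, $u$ solves \eqref{def_wsnse} in the weak sense of Definition \ref{def.weaksol}, hence $u\in\mathcal{K}$, completing the proof.
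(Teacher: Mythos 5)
Your overall strategy is exactly the one the paper intends: the paper does not reprove this lemma but delegates it to \cite[Theorem 13]{CKR}, which is precisely this scheme (uniform energy bounds from \eqref{IneqAbsorb}, a uniform estimate on $\tfrac{d}{dt}u^j$ in a negative space, Aubin--Lions, and passage to the limit in the nonlinear term). Your key observation that $F_{N_j}(u^j)\leqslant 1$ always, so that $\|F_{N_j}(u^j)u^j\otimes u^j\|_{L^2}\leqslant\|u^j\|_{L^4}^2\leqslant C\|u^j\|_H^{1/2}\|u^j\|_V^{3/2}$ gives a bound on $G_{N_j}(u^j)$ in $L^{4/3}(0,T;V^*)$ that is genuinely independent of $N_j$, is the right way to avoid the non-uniform bound $N_j\|u^j\|_{L^4}$ from Lemma \ref{lemmaEstFN}, and the interpolation upgrading $L^2(0,T;H)$-compactness to $L^2(0,T;L^4)$-compactness is correct. (You should also add a diagonal extraction over $T=1,2,3,\dots$ to get a single subsequence working for all $T>0$, but that is routine.)

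There is one step whose justification, as written, does not work: the claim that $F_{N_j}(u^j(t))\to 1$ for a.e.\ $t$ \emph{because} $\|u^j(t)\|_{L^4}<\infty$ for a.e.\ $t$ and $N_j\to\infty$. Since $u^j$ varies with $j$, finiteness of each $\|u^j(t)\|_{L^4}$ separately proves nothing: if, say, $\|u^j(t)\|_{L^4}=N_j^2$ then $F_{N_j}(u^j(t))=N_j^{-1}\to 0$. What you actually need is a pointwise-in-$t$ bound on $\|u^j(t)\|_{L^4}$ that is uniform in $j$, and this is available from what you have already established: strong convergence $u^j\to u$ in $L^2(0,T;L^4)$ yields, along a further subsequence, $\|u^j(t)\|_{L^4}\to\|u(t)\|_{L^4}<\infty$ for a.e.\ $t$, hence $\sup_j\|u^j(t)\|_{L^4}<\infty$ a.e., hence $F_{N_j}(u^j(t))=1$ for all $j$ large (depending on $t$). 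With this correction, writing
\begin{equation*}
F_{N_j}(u^j)\,u^j\otimes u^j-u\otimes u
= F_{N_j}(u^j)\bigl(u^j\otimes u^j-u\otimes u\bigr)+\bigl(F_{N_j}(u^j)-1\bigr)u\otimes u,
\end{equation*}
the first term tends to $0$ in $L^1(0,T;L^2)$ because $0\leqslant F_{N_j}\leqslant 1$, and the second by dominated convergence with dominating function $\|u(\cdot)\|_{L^4}^2\in L^1(0,T)$. The rest of your passage to the limit then goes through and produces $u\in\mathcal{K}$.
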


We can easily extend this result.

\begin{lemma}
\label{Converg2} The subsequence obtained in Lemma \ref{Converg} also
satisfies 
\begin{equation}  \label{WeakUniform}
S_{N_j}(s_j)u_0^j \overset{j\to\infty}{\longrightarrow} u(t_0) \hbox{ weakly
in } H \hbox{ when } s_j\overset{j\to\infty}{\longrightarrow} t_0, \hbox{
with } \{s_j\}_{j\in \mathbb{N}}\subset[0,\infty).
\end{equation}
\end{lemma}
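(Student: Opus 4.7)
The plan is to upgrade the $L^2(0,T;H)$ convergence of Lemma \ref{Converg} to a pointwise weak convergence at varying times $s_j \to t_0$. The standard tool is Arzel\`a--Ascoli applied to the scalar functions $t \mapsto (S_{N_j}(t)u_0^j,\phi)$ for $\phi$ in a dense subset of $H$: combined with strong $L^2$ convergence these give uniform convergence on compact time intervals, which readily yields the claim at $s_j \to t_0$. Write $v_j(t) = S_{N_j}(t)u_0^j$ throughout.

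First, I would record a uniform $L^\infty(0,\infty;H)$ bound for $\{v_j\}$: by Lemma \ref{AbsorbingN} (or directly the energy inequality \eqref{IneqAbsorb}) and the boundedness of $\{u_0^j\} \subset B_0$, there exists $R>0$ independent of $j$ with $\|v_j(t)\|_H \leqslant R$ for all $t \geqslant 0$. In particular $\{v_j(s_j)\}_j$ is bounded in $H$, so to obtain weak convergence to $u(t_0)$ it suffices (by density of $\mathcal{V}$ in $H$) to show $(v_j(s_j),\phi) \to (u(t_0),\phi)$ for every $\phi \in \mathcal{V}$.

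The key step is equicontinuity. Fix $\phi \in \mathcal{V}$, so $A\phi \in H$ and $\nabla\phi \in L^\infty$. Using the very weak formulation \eqref{WNSE} and the crucial observation that $F_{N_j}(v_j) \leqslant 1$ pointwise, I estimate
\[
\Bigl|\int_\Omega F_{N_j}(v_j)\, v_j \otimes v_j \cdot \nabla\phi\Bigr| \leqslant F_{N_j}(v_j)\,\|v_j\otimes v_j\|_{L^1}\,\|\nabla\phi\|_{L^\infty} \leqslant R^2\,\|\nabla\phi\|_{L^\infty},
\]
which is \emph{uniform in $j$}, bypassing the unbounded factor $N_j$ that would appear if one tried the $L^2$ bound of Lemma \ref{lemmaEstFN}. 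The other two terms of \eqref{WNSE} are bounded by $R\,\|A\phi\|_H$ and $\|f_\sigma\|_H\,\|\phi\|_H$ respectively. Hence
\[
\Bigl|\tfrac{d}{dt}(v_j(t),\phi)\Bigr| \leqslant C(\phi) \quad \text{for every } t>0,\ j\in\mathbb{N},
\]
so the scalar functions $g_j(t) := (v_j(t),\phi)$ are uniformly Lipschitz on $[0,\infty)$, in particular equicontinuous on any $[0,T]$.

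From Lemma \ref{Converg}, $v_j \to u$ strongly in $L^2(0,T;H)$, so $g_j \to g_\infty := (u(\cdot),\phi)$ in $L^2(0,T)$. Equicontinuity plus pointwise boundedness yield, via Arzel\`a--Ascoli, a uniformly convergent subsequence on $[0,T]$; uniqueness of $L^2$ limits forces the limit to be $g_\infty$ (which is then continuous), and a standard subsequence argument gives $g_j \to g_\infty$ uniformly on $[0,T]$. Consequently, for any $s_j \to t_0$ (with $T$ chosen larger than $\sup_j s_j$),
\[
(v_j(s_j),\phi) = g_j(s_j) \longrightarrow g_\infty(t_0) = (u(t_0),\phi).
\]
Combining this pointwise convergence on the dense set $\mathcal{V}$ with the uniform $H$-bound on $\{v_j(s_j)\}$ gives $v_j(s_j) \rightharpoonup u(t_0)$ weakly in $H$, as required. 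The only delicate point is the uniform control of the modified nonlinearity, and the trivial bound $F_N \leqslant 1$ used in $L^1$ is precisely what makes the $N_j \to \infty$ limit harmless.
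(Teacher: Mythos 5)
Your proof is correct and follows essentially the same route as the paper, which obtains $S_{N_j}(\cdot)u_0^j\to u(\cdot)$ in $C([0,T],H_{-\frac12})$ by the standard Ascoli-type equicontinuity argument (delegating the details to the cited reference) and then combines this with the uniform bound \eqref{IneqAbsorb}; your version simply carries out that equicontinuity-plus-boundedness argument explicitly at the level of scalar test functions $t\mapsto (v_j(t),\phi)$. Your observation that the nonlinear term must be controlled uniformly in $j$ via $F_{N_j}\leqslant 1$ together with an $L^1$ bound on $v_j\otimes v_j$ — rather than through Lemma \ref{lemmaEstFN}, whose constant grows like $N_j$ — is precisely the point the paper leaves implicit.
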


\begin{proof}
In a standard way (see \cite[p.1494]{KloVal}) we obtain $S_{N_j}(\cdot
)u_0^j\to u(\cdot)$ in $C([0,T],H_{-\frac12})$ for any $T>0$. Combining this
with inequality \eqref{IneqAbsorb}, the statement follows.
\end{proof}

From Lemmas \ref{Converg} and \ref{Converg2} it follows that for any $u_0
\in H$ there exists $u\in\mathcal{KN}$ such that $u(0)=u_0$ (the subclass $%
\mathcal{KN}$ of $\mathcal{K}$ was introduced in Definition \ref{KN}).

\begin{lemma}
\label{Energy} Each $u\in\mathcal{KN}$ satisfies the energy inequality 
\begin{equation}  \label{EnergyIneq}
V(u(t))\leqslant V(u(s))\hbox{ for a.e. } s>0\hbox{ and all } t\geqslant s,
\end{equation}
where 
\[
V(u(r))=\displaystyle\tfrac{1}{2}\|u(r)\|_H^2+\nu\int_0^r\|u(\xi)%
\|_{H_{\frac12}}^2 d\xi-\int_0^r \langle u(\xi),f_{\sigma,-1}\rangle_{\frac12,-\frac12} d\xi.
\]
 Moreover, if $%
u(0)\in{H}\setminus\mathcal{A}$, then \eqref{EnergyIneq} is true for $s=0$
as well.
\end{lemma}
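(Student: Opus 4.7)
The plan is to pass to the limit in the energy \emph{equality} that every GMNSE approximation satisfies. By Definition \ref{KN}, every $u\in\mathcal{KN}$ is a weak limit of GMNSE solutions of the form $u^j(t)=S_{N_j}(t+t_j)u_0^j$ (Case 1, with $t_j\to\infty$) or $u^j(t)=S_{N_j}(t)u(0)$ (Case 2). The key structural input is that, thanks to $\mathrm{div}\,u^j=0$ and the standard cancellation $\int_\Omega F_N(u^j)(u^j\cdot\nabla)u^j\cdot u^j=0$, multiplying \eqref{NavierModified} by $u^j$ yields the energy equality
\[
\tfrac12\|u^j(t)\|_H^2 + \nu\int_s^t\|u^j(\xi)\|_V^2\,d\xi
= \tfrac12\|u^j(s)\|_H^2 + \int_s^t\langle u^j(\xi),f_{\sigma,-1}\rangle_{\frac12,-\frac12}\,d\xi
\]
for all $0\leqslant s\leqslant t$. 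This is what I would propagate to the weak limit $u$.

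First I would set up the compactness. Using \eqref{IneqAbsorb} and $B_0$, the sequence $\{u^j\}$ is uniformly bounded in $L^\infty(0,T;H)\cap L^2(0,T;V)$; together with the bound on $du^j/dt$ in a negative-order space coming from \eqref{VeryWeakNSE}, the Aubin--Lions lemma (as in Lemma \ref{Converg}) yields, along a subsequence, strong convergence $u^j\to u$ in $L^2(0,T;H)$ and weak convergence in $L^2(0,T;V)$; Lemma \ref{Converg2} then gives the pointwise weak convergence $u^j(r)\rightharpoonup u(r)$ in $H$ at \emph{every} $r\geqslant 0$. After extracting a further subsequence, $\|u^j(s)\|_H\to\|u(s)\|_H$ for a.e.\ $s\in(0,\infty)$.

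Next, fixing any such ``good'' $s$ and any $t\geqslant s$, I would take $\liminf_{j\to\infty}$ in the equality above. The right-hand side converges strongly: $\tfrac12\|u^j(s)\|_H^2\to\tfrac12\|u(s)\|_H^2$ and the linear term converges via weak $L^2(s,t;V)$ convergence of $u^j$. On the left-hand side, the two summands are lower semi-continuous, giving $\|u(t)\|_H^2\leqslant\liminf\|u^j(t)\|_H^2$ (weak lsc of the $H$-norm at $t$) and $\int_s^t\|u\|_V^2\leqslant\liminf\int_s^t\|u^j\|_V^2$ (weak lsc of the $L^2(s,t;V)$-norm). Adding and rearranging yields exactly
\[
\tfrac12\|u(t)\|_H^2+\nu\int_s^t\|u\|_V^2\,d\xi-\int_s^t\langle u,f_{\sigma,-1}\rangle\,d\xi \;\leqslant\; \tfrac12\|u(s)\|_H^2,
\]
which is $V(u(t))\leqslant V(u(s))$ after subtracting the common integrals from $0$ to $s$.

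Finally, for Case 2 of Definition \ref{KN}, the approximating sequence is $u^j(t)=S_{N_j}(t)u(0)$, so $u^j(0)=u(0)$ for every $j$; thus $\|u^j(0)\|_H\to\|u(0)\|_H$ \emph{strongly} (trivially) and the same liminf argument applies with $s=0$, producing $V(u(t))\leqslant V(u(0))$ for all $t\geqslant 0$. The main obstacle is the Case 1 situation, where $u^j(0)=S_{N_j}(t_j)u_0^j$ converges to $u(0)\in\mathcal{A}$ only weakly, so $\|u^j(0)\|_H\to\|u(0)\|_H$ need not hold; this is precisely why the inequality is available only for a.e.\ $s>0$ (where strong convergence of the norms is granted by $L^2(0,T;H)$ compactness) and not at $s=0$. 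Outside of this point, the argument is a routine lower-semicontinuity passage.
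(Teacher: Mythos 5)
Your proposal is correct and follows essentially the same route as the paper: pass to the limit in the energy equality satisfied by the GMNSE approximations $S_{N_j}(\cdot+t_j)u_0^j$ (resp.\ $S_{N_j}(\cdot)u(0)$), using weak lower semicontinuity of the $H$- and $L^2(s,t;V)$-norms on the left, a.e.\ strong convergence of $\|u^j(s)\|_H$ (from the $L^2(0,T;H)$ compactness of Lemma \ref{Converg}) on the right, and the fact that in Case 2 the initial data are constant so $s=0$ is admissible. Your closing remark on why $s=0$ fails in Case 1 is a correct reading of the obstruction, though the paper leaves it implicit.
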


\begin{proof}
Let us first consider $u(0)\in\mathcal{A}$. Then there exist $t_j\overset{%
j\to\infty}{\longrightarrow}\infty$, $\{u_0^j\}_{j\in \mathbb{N}}\subset B_0$
and $N_j\overset{j\to\infty}{\longrightarrow} \infty$ such that 
\begin{equation*}
v_j(t):=S_{N_j}(t+t_j)u_0^j\overset{j\to\infty}{\longrightarrow} u(t)\text{
weakly in }{H},\text{ for all }t\geqslant 0. 
\end{equation*}
The functions $v_j$ satisfy, for all $0\leqslant s\leqslant t$, the energy
equality: 
\begin{equation*}
\frac{1}{2}\| v_j(t)\|_{{H}}^2 +\nu\int_s ^{t}\| v_j(\xi)\|_{{H_{\frac12}}}^2
d\xi-\int_s ^{t}\langle v_j(\xi),f_{\sigma,-1}\rangle_{\frac12,-\frac12}d\xi=\frac{1}{2}\| v_j(s)\|_{{H}}^2 . 
\end{equation*}
By the convergences in Lemmas \ref{Converg}-\ref{Converg2} we have 
\begin{align*}
\| u(t)\|_{{H}}^2 & \leqslant \liminf_{j\to \infty}\ \|v_j(t)\|_H^2 , \\
\int_s^t\| u(\xi)\|_{H_{\frac12}}^2d\xi & \leqslant \liminf_{j\to
\infty}\int_s^t\| v_j(\xi)\|_{{H_{\frac12}^2 }}^2 d\xi, \\
\int_s^t(u(\xi),f_{\sigma,-1})d\xi & = \lim_{j\to
\infty}\int_s^t \langle v_j(\xi),f_{\sigma,-1}\rangle_{\frac12,-\frac12}d\xi \quad \hbox{ for all }
t\geqslant s\geqslant 0, \\
\lim_{j\to \infty}\| v_j(s)\|_H^2 & =\|u(s)\|_H^2 \quad \hbox{ for a.e. }
s>0.
\end{align*}
Hence for a.e. $s>0$ and all $t\geqslant s$, we have 
\begin{equation*}
\frac{1}{2}\| u(t)\|_{{H}}^2 +\nu\int_s ^{t}\| u(\xi)\|_{{H_{\frac12}}}^2
d\xi-\int_s ^{t}\langle u(\xi),f_{\sigma,-1}\rangle_{\frac12,-\frac12}d\xi\leqslant \frac{1}{2}\| u(s)\|_{{H}}^2, 
\end{equation*}
so \eqref{EnergyIneq} follows.

Now let $u(0)\in H\setminus \mathcal{A}$. Then there exists $N_j\overset{%
j\to\infty}{\longrightarrow} \infty$ such that 
\begin{equation*}
v_j(t)=S_{N_j}(t)u(0)\overset{j\to\infty}{\longrightarrow} u(t)\text{ weakly
in }{H},\text{ for all }t\geqslant 0. 
\end{equation*}
Then, arguing as before, we obtain \eqref{EnergyIneq}. Also,
as $v_j(0)=u(0)$ for all $j$, the inequality is true for $s=0$ as well.
\end{proof}

\begin{lemma}
\label{Absorbing2} The ball $B_0$ is absorbing for the solutions in $%
\mathcal{KN}$, that is, for any bounded set $B$ in ${H}$ there exists $%
T_{B}\geqslant0$ such that 
\begin{equation*}
u(t)\in B_0\quad \hbox{ for all }t\geqslant T_{B}, 
\end{equation*}
where $u\in\mathcal{KN}$ and $u(0)\in B$.
\end{lemma}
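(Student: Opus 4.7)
My plan is to pass to the weak limit in the uniform absorbing estimate \eqref{IneqAbsorb}, which holds for every $S_N$ independently of $N$, and then invoke lower semi-continuity of $\|\cdot\|_H$ under weak convergence in $H$. Set $a = \nu \lambda_1$ and $K_0 = \|f_{\sigma,-1}\|_{H_{-\frac12}}^2/(\lambda_1 \nu^2)$, so that by \eqref{def.B0} we have $B_0 = \{v \in H : \|v\|_H^2 \leqslant 1 + K_0\}$. Fix a bounded set $B \subset H$ with $R := \sup_{v \in B}\|v\|_H < \infty$ and take $u \in \mathcal{KN}$ with $u(0) \in B$. I treat the two alternatives of Definition \ref{KN} separately, in both cases choosing the constant sequence $s_j \equiv t$ (which is admissible in the definition).

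For case (1), $u(0) \in \mathcal{A}$, I obtain sequences $t_j \to \infty$, $\{u_0^j\} \subset B_0$, and $N_j \to \infty$ with $S_{N_j}(t + t_j) u_0^j \rightharpoonup u(t)$ weakly in $H$. From \eqref{IneqAbsorb},
\[
\|S_{N_j}(t + t_j) u_0^j\|_H^2 \leqslant \|u_0^j\|_H^2 e^{-a(t + t_j)} + K_0,
\]
and since $\{u_0^j\} \subset B_0$ is uniformly bounded while $t + t_j \to \infty$, the first term vanishes as $j \to \infty$. Weak lower semi-continuity of the norm then gives $\|u(t)\|_H^2 \leqslant \liminf_j \|S_{N_j}(t + t_j) u_0^j\|_H^2 \leqslant K_0 < 1 + K_0$, so in fact $u(t) \in B_0$ for \emph{every} $t \geqslant 0$.

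For case (2), $u(0) \in H \setminus \mathcal{A}$, I obtain $N_j \to \infty$ with $S_{N_j}(t) u(0) \rightharpoonup u(t)$ weakly in $H$, and \eqref{IneqAbsorb} yields
\[
\|S_{N_j}(t) u(0)\|_H^2 \leqslant \|u(0)\|_H^2 e^{-at} + K_0 \leqslant R^2 e^{-at} + K_0.
\]
Weak lower semi-continuity gives $\|u(t)\|_H^2 \leqslant R^2 e^{-at} + K_0$, and choosing $T_B \geqslant \max\{0,\,a^{-1}\ln R^2\}$ makes $R^2 e^{-aT_B} \leqslant 1$, so $u(t) \in B_0$ for all $t \geqslant T_B$. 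Combining both cases with the same $T_B$ (which works trivially in case (1)) completes the proof.

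I do not foresee a genuine obstacle, as the hard work is already encoded in \eqref{IneqAbsorb}, which is uniform in $N$, and in Lemmas \ref{Converg}--\ref{Converg2}, whose role is precisely to make this limit step legitimate. An alternative route would start from the energy inequality of Lemma \ref{Energy} and apply a Gronwall-type argument to $\|u(\cdot)\|_H^2$; however, that path is longer and must deal with the merely-a.e.-in-$s$ nature of the inequality, whereas the weak-limit argument above bypasses this difficulty entirely.
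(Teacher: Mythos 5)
Your proposal is correct and follows essentially the same route as the paper: in both cases of Definition \ref{KN} one represents $u(t)$ as a weak limit of $S_{N_j}(\cdot)$-orbits, uses the $N$-independent absorption (the paper invokes Lemma \ref{AbsorbingN}, you unwind it to the explicit estimate \eqref{IneqAbsorb}), and concludes by weak lower semicontinuity of $\|\cdot\|_H$. Your version is marginally more quantitative (showing $u(t)\in B_0$ for all $t\geqslant 0$ in case (1)), but the argument is the same.
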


\begin{proof}
We begin by fixing a bounded subset $B$ of ${H}$. From Lemma \ref{AbsorbingN}
there exists $\tilde{T}=T_{B,B_0}\geqslant0$ such that 
\begin{equation*}
S_{N}(t)u_0\in B_0\quad\hbox{ for all }u_0\in B\cup B_0,\ t\geqslant \tilde{T%
}\hbox{ and }N>0. 
\end{equation*}
Now let $u\in\mathcal{KN}$. If $u(0)\in\mathcal{A}$ and $t\geqslant\tilde{T}$%
, there exists sequences $t_j\overset{j\to\infty}{\longrightarrow}\infty$, $%
\{u_0^j\}_{j\in \mathbb{N}}\subset B_0$ and $N_j\overset{j\to\infty}{%
\longrightarrow} \infty$ such that $S_{N_j}(t+t_j)u_0^j\overset{j\to\infty}{%
\longrightarrow} u(t)$ weakly in $H$. Hence 
\begin{equation*}
\| u(t)\|_{H}\leqslant \liminf_{j\to
\infty}\|S_{N_j}(t+t_j)u_0^j\|_{H}\leqslant1+\frac{\|f_{\sigma,-1}\|_{H_{-\frac12}}^2 }{%
\lambda_1\nu^2}, 
\end{equation*}
since $S_{N_j}(t+t_j)u_0^j\in B_0$ for all $j\in\mathbb{N}$. Hence, $u(t)\in
B_0$.

If $u(0)\in{H}\setminus\mathcal{A}$ and $t\geqslant\tilde{T}$, there exists $%
N_j\overset{j\to\infty}{\longrightarrow} \infty$ such that $S_{N_j}(t)u(0)%
\overset{j\to\infty}{\longrightarrow} u(t)$ weakly in ${H}$. Thus, 
\begin{equation*}
\| u(t)\|_{H}\leqslant\liminf_{j\to \infty}\|S_{N_j}(t)u(0)\|_{H}\leqslant1+\frac{\|f_{\sigma,-1}\|_{H_{-\frac12}}^2 }{%
\lambda_1\nu^2},
\end{equation*}
since $S_{N_j}(t)u(0)\in B_0$ for $t\geqslant\tilde{T}$. Hence, $u(t)\in B_0$
and the proof is complete.
\end{proof}

We observe that the weak topology of ${H}$ is metrizable in the ball $B_0$.
We denote this metric by $\rho_w $. Since all the solutions starting in a
bounded subset $B$ of ${H}$ enter $B_0$ and remain there after some time $%
T_{B}\geqslant0$, we can use this metric to study the asymptotic behavior of
the solutions. For $A,B\subset B_0$ we denote by 
\begin{equation}  \label{def.distw}
dist_w (A,B)=\sup_{a\in A}\inf_{b\in B}\rho_w (a,b)
\end{equation}
the Hausdorff semidistance from $A$ to $B$ in the weak topology.

%\begin{theorem}
%\label{Attractor} The set $\mathcal{A}$ is a global attractor for the
%solutions in $\mathcal{KN}$, which means that:
%\begin{enumerate}[label={\rm (\alph*)}]
%\item \label{Attractor-a} $\mathcal{A}$ is weakly compact in $H$.
%%
%\item \label{Attractor-b} $\mathcal{A}$ is negatively invariant, that is, for
%any $y\in\mathcal{A}$ and $t\geqslant0$ there exists $u \in\mathcal{KN} $ such
%that $u(0) \in\mathcal{A}$ and $u(t)=y$.
%%
%\item \label{Attractor-c} $\mathcal{A}$ is positively invariant, that is,
%given $t\geqslant0$ we have
%\[
%\{u(t)\colon u\in\mathcal{KN}, \ u(0)\in\mathcal{A}\}\subset\mathcal{A}.
%\]
%%
%\item \label{Attractor-d} $\mathcal{A}$ weakly attracts $\mathcal{KN}$, that
%is, for any bounded subset $B$ of $H$ we have
%\[
%\sup_{u\in\mathcal{KN},\ u(0) \in B}dist_w (u(t),\mathcal{A}) \stackrel{t\to\infty}{\longrightarrow} \infty 0.
%\]
%\end{enumerate}
%\end{theorem}

\bigskip

\begin{proof}[Proof of Theorem \protect\ref{Attractor}.]
\ref{Attractor-a} Let $\{y_j\}\subset\mathcal{A}$. Hence, there exist
sequences $t_j\overset{j\to\infty}{\longrightarrow}\infty$, $\{u_0^j\}_{j\in 
\mathbb{N}}\subset B_0$ and $N_j\overset{j\to\infty}{\longrightarrow} \infty$
such that 
\begin{equation*}
\rho_w (S_{N_j}(t_j)u_0^j,y_j) < \frac1j, 
\end{equation*}
where we can assume that $j$ is sufficiently large so that $%
S_{N_j}(t_j)u_0^j \in B_0$. Hence, up to a subsequence, $%
S_{N_{n}}(t_{n})u_0^{n}\overset{n\to\infty}{\longrightarrow} z$ weakly in $H$
for some $z\in H$ (clearly $z\in \mathcal{A}$). It follows that $y_{n}%
\overset{n\to\infty}{\longrightarrow} z$, and proves that $\mathcal{A}$ is
weakly compact.

\ref{Attractor-b} Let $y\in\mathcal{A}$, $t\geqslant0$, $t_j\overset{%
j\to\infty}{\longrightarrow}\infty$, $\{u_0^j\}_{j\in \mathbb{N}}\subset B_0$
and $N_j\overset{j\to\infty}{\longrightarrow} \infty$ be such that $%
S_{N_j}(t_j)u_0^j\to y$ weakly in $H$. Since $t_j\overset{j\to\infty}{%
\longrightarrow}\infty$, we can assume that $S_{N_j}(t_j-t)u_0^j\in B_0$ for
all $j\in\mathbb{N}$, and hence, up to a subsequence $%
z_0^j:=S_{N_j}(t_j-t)u_0^j\to z$ weakly in $H$ for some $z\in H$. From
Lemmas \ref{Converg} and \ref{Converg2}, there exists $u\in \mathcal{K}$
such that if $[0,\infty)\ni s_j\to t_0$, up to a subsequence, we have 
\begin{equation*}
S_{N_j}(s_j)z_0^j\to u(t_0) \hbox{ weakly in } H. 
\end{equation*}
In particular $u(t)\leftarrow S_{N_j}(t)z_0^j=S_{N_j}(t_j)u_0^j \to y$, and
hence $u(t)=y$. Also, for any $t_0\geqslant0$ and $[0,\infty)\ni s_j\to t_0$
we have 
\begin{equation}  \label{eq:Kn1}
u(t_0) \leftarrow S_{N_j}(s_j)z_0^j = S_{N_j}(t_j-t+s_j)u_0^j.
\end{equation}
When $t_0=0$, we obtain $S_{N_j}(t_j-t)u_0^j\to u(0)$ which proves that $%
u(0)\in\mathcal{A}$. Since $u(0)\in\mathcal{A}$, from \eqref{eq:Kn1} it
follows that $u \in\mathcal{KN}$.

\ref{Attractor-c} Since $u\in\mathcal{KN}$ and $u(0)\in\mathcal{A}$ then
given $t\geqslant0$ we have $S_{N_j}(t+t_j)u_0^j\to u(t)$ weakly in $H$ as $%
j\to\infty$. Hence $u(t)\in\mathcal{A}$, since $t+t_j\overset{j\to\infty}{%
\longrightarrow}\infty$.

\ref{Attractor-d} If that is not the case, there exist $\{u_j\}_{j\in 
\mathbb{N}}\subset \mathcal{KN}$ with $u_j(0)\in B$, $t_j\overset{j\to\infty}%
{\longrightarrow}\infty$ and $\varepsilon>0$ such that 
\begin{equation}  \label{eq:Contrad}
dist_w (u_j(t_j),\mathcal{A})\geqslant\varepsilon \quad \hbox{ for all } j\in%
\mathbb{N}.
\end{equation}
Up to subsequences, we can distinguish two cases: $\{u_j(0)\}_{j\in \mathbb{N%
}}\subset \mathcal{A}$ and $\{u_j(0)\}_{j\in \mathbb{N}}\subset H\setminus%
\mathcal{A}$.

When $\{u_j(0)\}_{j\in \mathbb{N}}\subset\mathcal{A}$, by a diagonalization
process we can construct sequences $r_j\overset{j\to\infty}{\longrightarrow}
\infty$, $\{u_0^j\}_{\mathbb{N}}\subset B_0$ and $N_j\overset{j\to\infty}{%
\longrightarrow} \infty$ such that 
\begin{equation*}
\rho_w (S_{N_j}(r_j+t_j)u_0^j,u_j(t_j))<\frac1j \quad \hbox{ for all } j\in%
\mathbb{N}. 
\end{equation*}
Also, up to a subsequence, we can assume that $S_{N_j}(r_j+t_j)u_0^j \overset%
{j\to\infty}{\longrightarrow} z$ weakly in $H$ for some $z\in H$. Clearly $%
z\in\mathcal{A}$ and $u_j(t_j) \overset{j\to\infty}{\longrightarrow} z$
weakly in $H$. Taking the limit when $j\to\infty$ in \eqref{eq:Contrad} we
obtain a contradiction.

It remains to consider the case $\{u_j(0)\}_{j\in \mathbb{N}}\subset
H\setminus \mathcal{A}$. We can construct a sequence $N_j\overset{j\to\infty}%
{\longrightarrow} \infty$ such that 
\begin{equation*}
\rho_w (S_{N_j}(t_j)u_j(0),u_j(t_j)) < \frac1j. 
\end{equation*}
Again, we can assume that, up to a subsequence, $S_{N_j}(t_j)u_j(0) \overset{%
j\to\infty}{\longrightarrow} z$ weakly in $H$ for some $z\in H$. Since $%
\{u_j(0)\}_{n\in \mathbb{N}}\subset B$, for $j$ sufficiently large, $%
t_j\geqslant T_B$ (where $T_B\geqslant 0$ is given in Lemma \ref{Absorbing2}%
) and $t_j-T_B \overset{j\to\infty}{\longrightarrow} \infty$. Hence $%
S_{N_j}(t_j)u_j(0) = S_{N_j}(t_j-T_B)S_{N_j}(T_B)u_j(0)$ and since $%
S_{N_j}(T_B)u_j(0)\in B_0$, we obtain $z\in\mathcal{A}$. Again, taking the
limit when $j\to\infty$ in \eqref{eq:Contrad} we obtain a contradiction and
prove the result.
\end{proof}

The solutions in $\mathcal{KN}$ satisfy the translation property on $%
\mathcal{A}$.

\begin{lemma}[Translation property]
\label{Translation} For any $u\in\mathcal{KN}$ such that $u(0)\in\mathcal{A}$%
, we have $u_s (\cdot):=u(\cdot+s)\in\mathcal{KN}$ for any $s>0$.
\end{lemma}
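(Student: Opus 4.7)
The plan is to verify directly the two requirements in Definition \ref{KN} for $u_s$, reusing (up to a time shift) the approximating data that witness $u\in\mathcal{KN}$. Two things must be checked: that $u_s\in\mathcal{K}$, and that the appropriate clause of Definition \ref{KN} applies to $u_s$.

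First I would observe that $\mathcal{K}$ is closed under positive time translations. Since $u$ satisfies the very weak formulation \eqref{def_wsnse} on $(0,\infty)$ and this equation is autonomous, the function $u_s(t)=u(t+s)$ inherits the regularity $u_s\in L^\infty_{\mathrm{loc}}(0,\infty;H)\cap L^2_{\mathrm{loc}}(0,\infty;V)$ with $\tfrac{du_s}{dt}\in L^1_{\mathrm{loc}}(0,\infty;V^\ast)$, and solves \eqref{def_wsnse} on $(0,\infty)$; hence $u_s\in\mathcal{K}$.

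Next I would determine which clause of Definition \ref{KN} to use. Since $u(0)\in\mathcal{A}$ and $u\in\mathcal{KN}$, the positive invariance of $\mathcal{A}$ (Theorem \ref{Attractor}\ref{Attractor-c}) gives $u_s(0)=u(s)\in\mathcal{A}$, so clause (1) is the one to verify. By clause (1) applied to $u$, there are sequences $t_j\to\infty$, $\{u_0^j\}\subset B_0$ and $N_j\to\infty$ such that for every $t_0\geqslant 0$ and every $[0,\infty)\ni r_j\to t_0$,
\begin{equation*}
S_{N_j}(r_j+t_j)u_0^j\longrightarrow u(t_0)\quad\text{weakly in }H.
\end{equation*}
I would then set $t'_j:=t_j+s$, $v_0^j:=u_0^j$, $N'_j:=N_j$. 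Clearly $t'_j\to\infty$, $\{v_0^j\}\subset B_0$ and $N'_j\to\infty$. Given any $t_0\geqslant 0$ and any $[0,\infty)\ni s_j\to t_0$, define $r_j:=s_j+s\geqslant 0$; then $r_j\to t_0+s$, and clause (1) applied to $u$ at the point $t_0+s$ yields
\begin{equation*}
S_{N'_j}(s_j+t'_j)v_0^j = S_{N_j}\bigl((s_j+s)+t_j\bigr)u_0^j \longrightarrow u(t_0+s) = u_s(t_0)\quad\text{weakly in }H,
\end{equation*}
which is exactly clause (1) of Definition \ref{KN} for $u_s$. Hence $u_s\in\mathcal{KN}$.

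There is essentially no obstacle beyond the bookkeeping: the argument works because the GMNSE semigroups $\{S_{N_j}\}$ and the NSE are autonomous, so a shift of the ``center'' times $t_j$ by the fixed amount $s$ converts an approximation of $u(t_0+s)$ into an approximation of $u_s(t_0)$, while the absorbing data $u_0^j$ and regularizing parameters $N_j$ need not be altered.
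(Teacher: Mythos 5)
Your proof is correct and follows essentially the same route as the paper: shift the centering times $t_j$ to $t_j+s$ while keeping the same $u_0^j$ and $N_j$, use positive invariance to place $u_s(0)=u(s)$ in $\mathcal{A}$, and conclude via clause (1) of Definition \ref{KN}. If anything, your version is slightly more careful than the paper's, since you verify the convergence along arbitrary sequences $s_j\to t_0$ as the definition requires, rather than only at fixed times $t$.
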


\begin{proof}
For each $s\geqslant0$, clearly $u_s \in\mathcal{K}$. From Theorem \ref%
{Attractor}, we know that $u_s (0)\in\mathcal{A}$. If $t_j\overset{j\to\infty%
}{\longrightarrow}\infty$, $\{u_0^j\}_{j\in \mathbb{N}}\subset B_0$ and $N_j%
\overset{j\to\infty}{\longrightarrow} \infty$ are such that 
\begin{equation*}
S_{N_j}(t+t_j)u_0^j \overset{j\to\infty}{\longrightarrow} u(t)\hbox{ weakly
in } H \hbox{ for all } t\geqslant0, 
\end{equation*}
we have 
\begin{equation*}
S_{N_j}(s+t+t_j)u_0^j \overset{j\to\infty}{\longrightarrow} u(t+s)=u_s (t)%
\hbox{ weakly in } H \hbox{ for all } t\geqslant 0, 
\end{equation*}
which shows that $u_s \in\mathcal{KN}$.
\end{proof}

We are not able to prove the translation property for $u\in \mathcal{KN}$
when $u(0)\in H\setminus\mathcal{A}$. The reason is that when we take the
translation of the approximative solutions the initial conditions are a
sequence, and not a constant value equal to the initial value of the limit
problem (as given in the definition of $\mathcal{KN}$). Indeed, if $u\in%
\mathcal{KN}$ and $u(0)\in H\setminus\mathcal{A}$ then, for $s>0$, we define 
$v(\cdot)=u(\cdot+s)$. We choose $s>0$ such that $u(s)\in H\setminus\mathcal{%
A}$. For $t_0\geqslant s$ and $[s,\infty)\ni s_j\to t_0$ we have%
\begin{equation*}
S_{N_j}(s_j)u(0)=S_{N_j}(s_j-s)S_{N_j}(s)u(0)=S_{N_j}(\widetilde{s}_j)u_0^j 
\overset{j\to\infty}{\longrightarrow} u(t_0)=v(t_0-s)=v(\widetilde{t}_0), 
\end{equation*}
where $\widetilde{s}_j=s_j-s$, $u_0^j=S_{N_j}(s)u(0),\ \widetilde{t}_0=t_0-s$%
. However, as remarked before, the initial conditions are not equal to $u(s)$%
, so we do not have the convergence 
\begin{equation*}
S_{N_j}(\widetilde{s}_j)u(s) \overset{j\to\infty}{\longrightarrow} v(%
\widetilde{t}_0). 
\end{equation*}

As a consequence of this property the map $G\colon\mathbb{R}^{+}\times%
\mathcal{A}\to P(\mathcal{A})$, where $P(\mathcal{A})$ is the set of all
nonempty subsets of $\mathcal{A}$, given by 
\begin{equation*}
G(t,u_0)=\{u(t)\colon u\in\mathcal{KN}\hbox{ and } u(0)=u_0 \in \mathcal{A}%
\} 
\end{equation*}
is a multivalued semiflow, that is, $G(0,\cdot)$ is the identity map and $%
G(t+s,u_0)\subset G(t,G(s,u_0))$ for all $t,s\geqslant0$ and $u_0 \in%
\mathcal{A}$, as shown in the next result.

\begin{proposition}
$G$ is a multivalued semiflow.
\end{proposition}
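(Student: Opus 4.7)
The plan is to verify directly the two axioms in the definition of a multivalued semiflow. The identity axiom is immediate: if $u\in\mathcal{KN}$ satisfies $u(0)=u_0$, then evaluating at $t=0$ gives $G(0,u_0)=\{u_0\}$, so $G(0,\cdot)$ is the identity on $\mathcal{A}$. All the content is in the inclusion $G(t+s,u_0)\subset G(t,G(s,u_0))$ for $t,s\geqslant 0$ and $u_0\in\mathcal{A}$.

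To establish this inclusion I would fix $y\in G(t+s,u_0)$ and unpack the definition: there exists $u\in\mathcal{KN}$ with $u(0)=u_0\in\mathcal{A}$ and $u(t+s)=y$. The natural candidate for a decomposition point in $G(s,u_0)$ is $z:=u(s)$. Two facts from the preceding results are then applied. First, positive invariance of $\mathcal{A}$, namely Theorem \ref{Attractor}\ref{Attractor-c}, gives $z=u(s)\in\mathcal{A}$; since the very same $u$ witnesses that $z\in G(s,u_0)$, we have $z\in G(s,u_0)\subset\mathcal{A}$.

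Second, to produce an element of $\mathcal{KN}$ starting at $z$ and ending at $y$ after time $t$, I would invoke the translation property (Lemma \ref{Translation}). Since $u\in\mathcal{KN}$ and $u(0)\in\mathcal{A}$, the translated function $u_s(\cdot):=u(\cdot+s)$ lies in $\mathcal{KN}$, with $u_s(0)=u(s)=z$ and $u_s(t)=u(t+s)=y$. Hence $y\in G(t,z)\subset G(t,G(s,u_0))$, and the inclusion follows.

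There is no substantive obstacle here: all the heavy lifting is already contained in the positive invariance statement of Theorem \ref{Attractor} and in Lemma \ref{Translation}. The only point to keep in mind is that the translation lemma is available precisely because $u(0)\in\mathcal{A}$ (the case $u(0)\in H\setminus\mathcal{A}$ is not needed since we are working with $u_0\in\mathcal{A}$ and positive invariance keeps everything inside $\mathcal{A}$ at all intermediate times).
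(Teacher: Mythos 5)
Your proof is correct and follows essentially the same route as the paper: decompose at $z=u(s)$, use positive invariance to keep $z$ in $\mathcal{A}$, and apply the translation property (Lemma \ref{Translation}) to exhibit $u_s\in\mathcal{KN}$ joining $z$ to $y$. The paper's proof is just a terser version of the same argument.
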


\begin{proof}
Fix $u_0\in\mathcal{A}$. If $y\in G(t+s,u_0)$ then there exists $u\in%
\mathcal{KN}$ with $u(0)=u_0$ such that $y=u(t+s)$. We have $y=u_s (t)$ and
from the translation property we obtain $u_s \in \mathcal{KN}$ and $u_s
(0)=u(s)$. Hence $y\in G(t,u(s))\subset G(t,G(s,u_0))$. Therefore, $%
G(t+s,u_0)\subset G(t,G(s,u_0))$.
\end{proof}

We have not been able to prove that $G$ is a strict multivalued semiflow,
that is, that the equality $G(t+s,u_0)=G(t,G(s,u_0))$ is satisfied for all $%
t,s\geqslant0$ and $u_0 \in\mathcal{A}$. The reason is that we do not know
whether the concatenation of solutions from $\mathcal{KN}$ remains in $%
\mathcal{KN}$ (even when $u_0 \in\mathcal{A}$), that is, we do not know that
if given $u_1,u_2\in\mathcal{KN}$ such that $u_2(0)=u_1(s)$ for some $s>0$,
the function 
\begin{equation*}
u(t)=\left\{%
\begin{array}{cl}
u_1(t) & \hbox{ for } 0 \leqslant t \leqslant s, \\ 
u_2(t-s) & \hbox{ for } t>s,%
\end{array}
\right. 
\end{equation*}
belongs to $\mathcal{KN}$.

\begin{lemma}
\label{k4} Let $u_0^n\to u_0$ weakly in $H$, where $\{u_0^n\}_{n\in \mathbb{N%
}} \subset \mathcal{A}$ and $u_0\in\mathcal{A}$. For any sequence $u_n \in%
\mathcal{KN}$ with $u_n(0)=u_0^n$ there exists a subsequence (renamed the
same) and $u\in\mathcal{KN}$ with $u(0)=u_0$ such that 
\begin{equation*}
u_n(s_n) \overset{n\to\infty}{\longrightarrow} u(t) \hbox{ weakly in }H %
\hbox{ for all } \lbrack 0,\infty)\ni s_n \overset{n\to\infty}{%
\longrightarrow} t. 
\end{equation*}
\end{lemma}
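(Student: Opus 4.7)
The plan is to build a candidate $u\in\mathcal{KN}$ with $u(0)=u_0$ by diagonalizing the approximating sequences that each $u_n$ inherits from Definition \ref{KN}(1), and then to deduce the weak convergence $u_n(s_n)\to u(t)$ in $H$ by combining the approximation error with the compactness of the embedding $H\hookrightarrow H_{-\frac12}$. Concretely, for each $n$, since $u_n(0)=u_0^n\in\mathcal{A}$ and $u_n\in\mathcal{KN}$, there exist $t^n_j\to\infty$, $u^{n,j}_0\in B_0$ and $N^n_j\to\infty$ with the pointwise weak convergence of Definition \ref{KN}(1). Applying Lemmas \ref{Converg} and \ref{Converg2} to the shifted data $\tilde u^{n,j}_0:=S_{N^n_j}(t^n_j)u^{n,j}_0$ (which lies in $B_0$ eventually by Lemma \ref{AbsorbingN} and converges weakly in $H$ to $u_0^n$ by the case $s_j=0$), and using uniqueness of weak limits to work along the full index $j$, one upgrades this to
$$\bigl\|S_{N^n_j}(t^n_j+\cdot)u^{n,j}_0 - u_n(\cdot)\bigr\|_{C([0,K],H_{-\frac12})} \xrightarrow[j\to\infty]{} 0 \quad\text{for every } K>0.$$
Choose $j(n)$ so large that, with $T_n:=t^n_{j(n)}$, $M_n:=N^n_{j(n)}$ and $w_n:=u^{n,j(n)}_0$, one has $T_n,M_n\geq n$ and $\|S_{M_n}(T_n+\cdot)w_n-u_n(\cdot)\|_{C([0,n],H_{-\frac12})}<1/n$.

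Set $\tilde w_n:=S_{M_n}(T_n)w_n\in B_0$ (eventually). Evaluating the previous approximation at $s=0$ gives $\|\tilde w_n-u_0^n\|_{H_{-\frac12}}<1/n$; since $u_0^n\to u_0$ weakly in $H$ and $H\hookrightarrow H_{-\frac12}$ is compact, $u_0^n\to u_0$ strongly in $H_{-\frac12}$, hence $\tilde w_n\to u_0$ strongly in $H_{-\frac12}$. Boundedness of $\tilde w_n$ in $H$ combined with a standard subsequence argument then yields $\tilde w_n\to u_0$ weakly in $H$. Now apply Lemmas \ref{Converg} and \ref{Converg2} to the sequence $(\tilde w_n,M_n)$: along a subsequence (not renamed) there exists $u\in\mathcal{K}$ with $u(0)=u_0$ such that, for every $t_0\geq 0$ and every $[0,\infty)\ni s_n\to t_0$,
$$S_{M_n}(s_n+T_n)w_n \,=\, S_{M_n}(s_n)\tilde w_n \,\longrightarrow\, u(t_0)\quad\text{weakly in }H.$$
The triple $(T_j,w_j,M_j)$ then witnesses condition (1) of Definition \ref{KN} for $u$, so $u\in\mathcal{KN}$ with $u(0)=u_0\in\mathcal{A}$.

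To obtain $u_n(s_n)\to u(t)$ weakly in $H$ whenever $s_n\to t$, decompose
$$u_n(s_n)-u(t) \,=\, \bigl(u_n(s_n)-S_{M_n}(s_n+T_n)w_n\bigr) + \bigl(S_{M_n}(s_n+T_n)w_n-u(t)\bigr).$$
For $n$ large enough that $s_n\leq n$, the first bracket is bounded by $1/n$ in $H_{-\frac12}$ by the choice of $(T_n,w_n,M_n)$, while the second tends to $0$ weakly in $H$ by the preceding paragraph, and therefore strongly in $H_{-\frac12}$ by the compact embedding. Hence $u_n(s_n)\to u(t)$ strongly in $H_{-\frac12}$. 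The sequence $\{u_n(s_n)\}$ is bounded in $H$ uniformly in $n$: for $s_n$ past the common absorbing time supplied by Lemma \ref{Absorbing2} applied with $B=B_0$ this is automatic, while for earlier times the energy inequality applied to $u_n$, together with the uniform bound $u_0^n\in B_0$, gives the required estimate. The standard subsequence-and-uniqueness argument then upgrades strong convergence in $H_{-\frac12}$ to weak convergence in $H$, as desired.

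The principal obstacle is the first step: Definition \ref{KN}(1) only supplies weak convergence in $H$ at each individual time $t_0$, so one must first upgrade it to $C([0,K],H_{-\frac12})$-uniform convergence along the full sequence before a single diagonal triple $(T_n,w_n,M_n)$ can simultaneously approximate every $u_n$ and also serve as a valid witness of Definition \ref{KN}(1) for the limit $u$. Once this upgrade is secured via Lemmas \ref{Converg} and \ref{Converg2} and the uniqueness of weak limits, the rest of the argument is an application of the triangle inequality plus the compact embedding $H\hookrightarrow H_{-\frac12}$.
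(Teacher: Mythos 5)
Your proof is correct and follows essentially the same route as the paper's: a diagonal extraction over the approximating triples $(t_j^n,u_0^{j,n},N_j^n)$ supplied by Definition \ref{KN}, followed by an application of Lemmas \ref{Converg}--\ref{Converg2} to the diagonal sequence and a triangle inequality in the metrized weak topology. The only difference is cosmetic: you make explicit the upgrade from pointwise weak convergence to uniform convergence in $C([0,K],H_{-\frac12})$ (via uniqueness of weak limits), a step the paper's proof states without comment.
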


\begin{proof}
For each $n\in \mathbb{N}$ there exist sequences $t_j^n\overset{j\to\infty}{%
\longrightarrow} \infty$, $\{u_0^{j,n}\}_{j\in \mathbb{N}}\subset B_0$ and $%
N_j^{n}\overset{j\to\infty}{\longrightarrow} \infty$ such that for any given 
$T>0$ we have 
\begin{equation*}
\rho_w (S_{N_j^{n}}(t+t_j^{n})u_0^{j,n},u_{n}(t))\xrightarrow{j\to \infty} 0
\quad \hbox{ uniformly in }\lbrack 0,T]. 
\end{equation*}
Thus, for each $n\in \mathbb{N}$ we can choose $j_{n}>n$ such that%
\begin{equation*}
\rho_w (S_{N_j^{n}}(t+t_{j_{n}}^{n})u_0^{j_{n},n},u_{n}(t))<\frac{1}{n}\quad 
\text{ for all } t\in[0,T]. 
\end{equation*}
Clearly, we can choose the sequence $\{j_n\}_{n\in \mathbb{N}}$ in such a
way that it is strictly increasing. By Lemmas \ref{Converg} and \ref%
{Converg2} there exists $u\in \mathcal{K}$ such that, up to a subsequence,
we have%
\begin{equation*}
\rho_w (S_{N_j^{n}}(t+t_{j_{n}}^{n})u_0^{j_{n},n},u(t))\xrightarrow{n\to
\infty} 0\quad \text{ uniformly in } [0,T]. 
\end{equation*}
Hence, $u\in\mathcal{KN}$ and, if $[0,T]\ni s_{n} \overset{n\to\infty}{%
\longrightarrow} t$, for any $\varepsilon>0$ there exists $%
n_0=n_0(\varepsilon)\in \mathbb{N}$ such that for $n\geqslant n_0$ we have 
\begin{align*}
\rho_w& (u_{n}(s_{n}),u(t)) \\
& \leqslant \rho_w
(S_{N_j^{n}}(s_{n}+t_{j_{n}}^{n})u_0^{j_{n},n},u(t))+\rho_w
(S_{N_j^{n}}(s_{n}+t_{j_{n}}^{n})u_0^{j_{n},n},u_{n}(s_{n})) \leqslant
\varepsilon+\frac{1}{n},
\end{align*}
so $u_{n}(s_{n}) \overset{n\to\infty}{\longrightarrow} u(t)$ weakly in $H$.
Clearly $u(0)=u_0$ and, by a diagonalization argument, the conclusion holds
for any $t\geqslant 0.$
\end{proof}

We obtain now a standard characterization of the weak global attractor $%
\mathcal{A}$. The function $\phi\colon \mathbb{R}\to H$ is said to be a 
\textbf{complete trajectory} of \eqref{eq:NS} in $\mathcal{KN}$ if 
\begin{equation*}
\phi(\cdot+s)\! \mid_{[0,\infty)}\ \in\mathcal{KN} \quad \hbox{ for any }
s\in\mathbb{R}. 
\end{equation*}
It is said to be a \textbf{bounded complete trajectory} in $\mathcal{KN}$ if
it is a complete trajectory in $\mathcal{KN}$ and $\phi(\mathbb{R})$ is a
bounded set in $H$.

\begin{lemma}
\label{Charac1} If $\phi$ is a bounded complete trajectory in $\mathcal{KN}$%
, then $\phi(\mathbb{R})\subset\mathcal{A}$.
\end{lemma}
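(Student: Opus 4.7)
I would prove this by exhibiting, for each $t \in \mathbb{R}$, sequences $t_j \to \infty$, $\{u_0^j\} \subset B_0$ and $N_j \to \infty$ with $S_{N_j}(t_j)u_0^j \to \phi(t)$ weakly in $H$. The key is to reach arbitrarily far into the past of the trajectory, using its completeness, so that the elapsed times can be made arbitrarily large.

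First I would check that $\phi(\mathbb{R}) \subset B_0$. Since $\phi(\mathbb{R})$ is bounded in $H$, Lemma \ref{Absorbing2} provides a time $T_0$ such that every $u \in \mathcal{KN}$ with $u(0) \in \phi(\mathbb{R})$ satisfies $u(\tau) \in B_0$ for $\tau \geqslant T_0$. Applied to the translation $\phi(\cdot+s)|_{[0,\infty)} \in \mathcal{KN}$, this gives $\phi(t) \in B_0$ whenever $t \geqslant s + T_0$; letting $s \to -\infty$ yields $\phi(\mathbb{R}) \subset B_0$.

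Fix $t \in \mathbb{R}$. The easy case is when $\phi(s) \in \mathcal{A}$ for some $s < t$. Then $u := \phi(\cdot+s)|_{[0,\infty)} \in \mathcal{KN}$ has $u(0) = \phi(s) \in \mathcal{A}$, so by Definition \ref{KN}(1) there exist $t_j \to \infty$, $\{u_0^j\} \subset B_0$ and $N_j \to \infty$ such that, taking $s_j \equiv t-s \geqslant 0$, we get $S_{N_j}(t_j + (t-s))u_0^j \to u(t-s) = \phi(t)$ weakly in $H$. Since $t_j + (t-s) \to \infty$, this directly places $\phi(t)$ in $\mathcal{A}$.

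The main obstacle is the remaining case, in which $\phi(s) \in H \setminus \mathcal{A}$ for every $s < t$: Definition \ref{KN}(2) applied to $\phi(\cdot + s_n)|_{[0,\infty)}$ yields, for each $n$, a sequence $\{N_j^{(n)}\}_{j \in \mathbb{N}}$ with $N_j^{(n)} \to \infty$ and $S_{N_j^{(n)}}(t - s_n)\phi(s_n) \to \phi(t)$ weakly in $H$ as $j \to \infty$, but the elapsed time $t - s_n$ is fixed in $j$. To force a divergent time, I would take $s_n \to -\infty$ and perform a diagonal extraction: noting that the weak topology of $H$ on $B_0$ is metrizable by $\rho_w$ and that, for $n$ large enough, $t - s_n \geqslant T_{B_0}$ so $S_{N_j^{(n)}}(t-s_n)\phi(s_n) \in B_0$ by Lemma \ref{AbsorbingN}, one can pick $j_n$ with $N_{j_n}^{(n)} \geqslant n$ and $\rho_w\bigl(S_{N_{j_n}^{(n)}}(t-s_n)\phi(s_n),\, \phi(t)\bigr) < 1/n$. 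Setting $\tau_n := t - s_n \to \infty$, $v_0^n := \phi(s_n) \in B_0$ and $M_n := N_{j_n}^{(n)} \to \infty$ then realizes exactly the sequences required in the definition of $\mathcal{A}$, whence $\phi(t) \in \mathcal{A}$.
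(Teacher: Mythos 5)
Your argument is correct, but it takes a genuinely different (and much longer) route than the paper. The paper's proof is two lines: writing $\phi(t_0)=u_\tau(\tau)$ with $u_\tau=\phi(\cdot+t_0-\tau)\in\mathcal{KN}$ and $u_\tau(0)\in\phi(\mathbb{R})=:B$ bounded, it invokes the attraction property in part \ref{Attractor-d} of Theorem \ref{Attractor} to get $dist_w(\phi(t_0),\mathcal{A})\leqslant \sup_{u\in\mathcal{KN},\ u(0)\in B}dist_w(u(\tau),\mathcal{A})\to 0$ as $\tau\to\infty$, and then the weak compactness of $\mathcal{A}$ (part \ref{Attractor-a}) to conclude $\phi(t_0)\in\mathcal{A}$. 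You instead unwind the definitions of $\mathcal{A}$ and $\mathcal{KN}$ directly, splitting into the case where $\phi(s)\in\mathcal{A}$ for some $s<t$ (where Definition \ref{KN}(1) immediately produces admissible sequences with diverging times) and the case where $\phi(s)\in H\setminus\mathcal{A}$ for all $s<t$ (where you send $s_n\to-\infty$ and diagonalize, using Lemma \ref{AbsorbingN} to stay in $B_0$ and the metrizability of the weak topology there). Both cases check out, and your preliminary observation that $\phi(\mathbb{R})\subset B_0$ via Lemma \ref{Absorbing2} is the same absorption input the paper's argument needs implicitly for $dist_w$ to be meaningful. What your approach buys is independence from Theorem \ref{Attractor}: the lemma is proved straight from the definitions, at the cost of a case analysis and a diagonal extraction that the attraction property lets the paper skip entirely. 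Since Theorem \ref{Attractor} is already established at this point, the paper's shortcut is the more economical choice; yours would be the one to use if the lemma were needed upstream of the attraction property.
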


\begin{proof}
Denote $B=\phi(\mathbb{R})$ and fix $t_0\in\mathbb{R}$. Denote $%
u_{\tau}(\cdot)=\phi(\cdot+t_0-\tau)$ for $\tau\geqslant 0$. Then, as $%
\phi(t_0)=u_{\tau}(\tau)$, we deduce by the attracting property of $\mathcal{%
A}$ that 
\begin{align*}
dist_w(\phi(t_0),\mathcal{A}) & = dist_w (u_{\tau}(\tau),\mathcal{A}) \\
& \leqslant \sup_{u\in\mathcal{KN},\ u(0)\in B}dist_w (u(\tau),\mathcal{A}) 
\overset{\tau\to\infty}{\longrightarrow} 0.
\end{align*}
Therefore, $\phi(t_0)\in\mathcal{A}$. Since $t_0\in \mathbb{R}$ is
arbitrary, the result follows.
\end{proof}

\begin{theorem}
\label{charac2} The weak global attractor $\mathcal{A}$ can be characterized
by the union of all bounded complete trajectories of \eqref{eq:NS} in $%
\mathcal{KN}$, that is, 
\begin{equation*}
\mathcal{A}=\{\phi(0)\colon\phi\hbox{ is a bounded complete trajectory of
\eqref{eq:NS} in }\mathcal{KN}\}. 
\end{equation*}
\end{theorem}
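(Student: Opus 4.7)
I would prove the two inclusions separately. The inclusion $\supseteq$ is immediate: if $\phi$ is a bounded complete trajectory of \eqref{eq:NS} in $\mathcal{KN}$, then by Lemma \ref{Charac1} we have $\phi(\mathbb{R})\subset\mathcal{A}$, and in particular $\phi(0)\in\mathcal{A}$.

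For the reverse inclusion $\subseteq$, fix $y\in\mathcal{A}$ and construct a bounded complete trajectory $\phi$ in $\mathcal{KN}$ with $\phi(0)=y$. The plan is to combine negative invariance (to reach arbitrarily far into the past) with Lemma \ref{k4} (a weak pointwise compactness property for trajectories starting in $\mathcal{A}$) and a diagonal extraction. By Theorem \ref{Attractor}\ref{Attractor-b}, for every $n\in\mathbb{N}$ there exists $u_n\in\mathcal{KN}$ with $u_n(0)\in\mathcal{A}$ and $u_n(n)=y$. Set $\phi_n(t)=u_n(t+n)$ for $t\in[-n,\infty)$, so that $\phi_n(0)=y$, $\phi_n(-n)\in\mathcal{A}$, and by positive invariance (Theorem \ref{Attractor}\ref{Attractor-c}) we have $\phi_n(t)\in\mathcal{A}$ for all $t\geqslant -n$. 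Since $u_n(0)\in\mathcal{A}$, Lemma \ref{Translation} yields $\phi_n(\cdot+s)|_{[0,\infty)}\in\mathcal{KN}$ for every $s\geqslant -n$.

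Next I would perform the diagonal extraction: for $k=1$, using weak compactness of $\mathcal{A}$, extract a subsequence so that $\phi_{n}(-1)$ converges weakly in $H$ to some $z_1\in\mathcal{A}$; then apply Lemma \ref{k4} to the $\mathcal{KN}$-trajectories $\phi_{n}(\cdot-1)|_{[0,\infty)}$ to obtain (along a further subsequence) a limit $w_1\in\mathcal{KN}$ with $w_1(0)=z_1$ and $w_1(1)=\lim\phi_{n}(0)=y$. Iterating this for $k=2,3,\ldots$ and taking a Cantor diagonal subsequence $\{n^*_j\}$, I obtain a consistent family $\{w_k\}_{k\geqslant 1}\subset\mathcal{KN}$ with $w_k(0)\in\mathcal{A}$, $w_k(k)=y$, and the compatibility $w_{k+1}(t+1)=w_k(t)$ for all $t\geqslant 0$, $k\geqslant 1$ (this compatibility comes automatically from the pointwise weak convergence in Lemma \ref{k4} applied to the same diagonal subsequence). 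Now define $\phi\colon\mathbb{R}\to H$ by $\phi(t)=w_k(t+k)$ whenever $t\geqslant -k$; this is well-defined thanks to the compatibility, and $\phi(\mathbb{R})\subset\mathcal{A}\subset B_0$, so $\phi$ is bounded, with $\phi(0)=y$.

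It remains to verify that $\phi(\cdot+s)|_{[0,\infty)}\in\mathcal{KN}$ for every $s\in\mathbb{R}$. Given $s\in\mathbb{R}$, pick $k\in\mathbb{N}$ with $k\geqslant -s$; then $\phi(\cdot+s)|_{[0,\infty)}(t)=w_k(t+s+k)$, which is a nonnegative-time translation of $w_k$ (whose initial value $w_k(0)$ lies in $\mathcal{A}$), so Lemma \ref{Translation} yields $\phi(\cdot+s)|_{[0,\infty)}\in\mathcal{KN}$. The main obstacle in this argument is the bookkeeping of the diagonal procedure so that the sequence of limiting trajectories $w_k$ fits together consistently to define a single function on all of $\mathbb{R}$; the nonlinear Lemma \ref{k4} does the real work, as it simultaneously provides both the pointwise weak limit and the fact that this limit remains in the distinguished class $\mathcal{KN}$.
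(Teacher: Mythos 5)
Your proposal is correct and follows essentially the same route as the paper: the inclusion $\supseteq$ via Lemma \ref{Charac1}, and for $\subseteq$ the use of negative invariance to produce trajectories reaching $y$ from times $n$ in the past, the translation property and positive invariance to keep them in $\mathcal{A}$, Lemma \ref{k4} together with a diagonal extraction to obtain a compatible family of limit trajectories $w_k$, and the gluing $\phi(t)=w_k(t+k)$. Your version is, if anything, slightly more explicit than the paper's about first extracting weak convergence of the shifted initial data $\phi_n(-k)$ (via weak compactness of $\mathcal{A}$) before invoking Lemma \ref{k4}, and about why $\phi(\cdot+s)|_{[0,\infty)}\in\mathcal{KN}$ for every real $s$.
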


\begin{proof}
{Let }$z\in\mathcal{A}$ be arbitrary. We take a sequence $t_{n}\overset{%
n\to\infty}{\longrightarrow} \infty$. Since $\mathcal{A}$ is
negatively invariant, there exist $u_{n}\in\mathcal{KN}$ such that $%
u_{n}(t_{n})=z$ and $u^{n}(0)\in\mathcal{A}$. We define the functions $%
v_{n}^{0}(\cdot)=u_{n}(\cdot+t_{n})$, which belongs to $\mathcal{KN}$ by the
translation property. As the attractor is positively invariant, $%
v_{n}^{0}(t)\in\mathcal{A}$ for all $t\geqslant 0$. In view of Lemma \ref{k4}%
, up to a subsequence, $v_{n}^{0}(s_{n}) \overset{n\to\infty}{\longrightarrow%
} v^{0}(t)\in\mathcal{A}$ weakly in $H$ for all $t\geqslant 0$ and $%
[0,\infty)\ni s_{n} \overset{n\to\infty}{\longrightarrow} t$, where $%
v^{0}(\cdot)\in\mathcal{KN}$. In addition, $v^{0}(0)=z$. Consider now the
sequence $v_{n}^{1}(\cdot )=u_{n} (\cdot +t_{n}-1)$, which again belongs to $%
\mathcal{KN}$ and satisfies $v_{n}^{1}(t)\in\mathcal{A}$ for all $t\geq0$.
Using again Lemma \ref{k4}, up to a subsequence we have $v_{n}^{1}(s_{n}) 
\overset{n\to\infty}{\longrightarrow} v^{1}(t)\in \mathcal{A}$ weakly in $H$
for all $t\geqslant 0$ and $[0,\infty)\ni s_{n} \overset{n\to\infty}{%
\longrightarrow} t$, where $v^{1}(\cdot )\in\mathcal{KN}$. It follows that $%
v^{1}(t+1)=v^{0}(t)$ for all $t\geqslant 0$ as well. In this way, we
construct inductively a sequence of solutions from $\mathcal{KN}$, denoted
by $\{v^j(\cdot )\}_{j\in \mathbb{N}}$, such that $v^j(t)\in$ $\mathcal{A}$
and $v^j(t+1)=v^{j-1}(t)$ for all $t\geqslant 0$ and $j$. Define $\phi\colon 
\mathbb{R} \to H$ to be at any $t$ the common value of the functions $v^j$,
that is, 
\begin{equation*}
\phi(t)=v^j(t+j)\text{ if }t\geqslant -j. 
\end{equation*}
Then $\phi$ is a bounded complete trajectory of \eqref{eq:NS} in $\mathcal{KN%
}$. Thus, 
\begin{equation*}
\mathcal{A}\subset\{\phi(0)\colon\phi\hbox{ is a bounded complete trajectory
of \eqref{eq:NS} in }\mathcal{KN} 
%\text{ such that }\phi (\mathbb{R})\subset\mathcal{A}
.\} 
\end{equation*}
The converse inclusion follows from Lemma \ref{Charac1}, and the result is
proved.
\end{proof}

Finally, we will prove that the global attractors $\mathcal{A}_N$ of %
\eqref{VeryWeakNSE} behave upper semicontinuously with respect to the weak
global attractor $\mathcal{A}$ of \eqref{eq:NS}.

\begin{proposition}
\label{prop.Cont} If $\mathcal{A}_{N}$ is the global attractor of $S_{N}$
then 
\begin{equation*}
dist_w (\mathcal{A}_{N},\mathcal{A}) \overset{N\to\infty}{\longrightarrow}
0. 
\end{equation*}
\end{proposition}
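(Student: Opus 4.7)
The plan is to argue by contradiction, using the invariance of each $\mathcal{A}_N$ together with the definition of $\mathcal{A}$. Suppose the conclusion fails. Then there exist $\varepsilon>0$ and sequences $N_j\to\infty$ and $y_j\in\mathcal{A}_{N_j}$ with $\rho_w(y_j,\mathcal{A})\geqslant\varepsilon$ for every $j\in\mathbb{N}$. Since $\mathcal{A}_{N_j}\subset B_0$ by Lemma \ref{AbsorbingN}, the sequence $\{y_j\}$ lies in the weakly compact set $B_0$, so, up to a subsequence, $y_j\to y$ weakly in $H$ for some $y\in B_0$.

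The key observation is that each $\mathcal{A}_{N_j}$ is invariant under $S_{N_j}$ (being the global attractor of a semigroup). Thus I pick any sequence $t_j\to\infty$ (for instance $t_j=j$) and use invariance to find, for every $j$, an element $u_0^j\in\mathcal{A}_{N_j}$ with $S_{N_j}(t_j)u_0^j=y_j$. Since $\mathcal{A}_{N_j}\subset B_0$, we have $u_0^j\in B_0$. Hence I have produced sequences $t_j\to\infty$, $\{u_0^j\}_{j\in\mathbb{N}}\subset B_0$, $N_j\to\infty$ with
\begin{equation*}
S_{N_j}(t_j)u_0^j = y_j \xrightarrow{j\to\infty} y \text{ weakly in } H.
\end{equation*}
By the very definition of the set $\mathcal{A}$ this forces $y\in\mathcal{A}$. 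But then $\rho_w(y_j,y)\to 0$ implies $\rho_w(y_j,\mathcal{A})\to 0$, contradicting the standing assumption $\rho_w(y_j,\mathcal{A})\geqslant\varepsilon$.

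No serious obstacle is anticipated: everything reduces to (i) the well-known invariance $S_N(t)\mathcal{A}_N=\mathcal{A}_N$ of any global attractor, which gives a preimage $u_0^j\in\mathcal{A}_{N_j}$ of $y_j$ at time $t_j$, (ii) the uniform containment $\mathcal{A}_{N_j}\subset B_0$ from Lemma \ref{AbsorbingN} (which is exactly what makes the argument work without any $N$-dependent bounds), and (iii) weak compactness of $B_0$ so that $\{y_j\}$ has a weak limit in $B_0$. The only mildly delicate point is making sure the chosen $t_j$ genuinely tends to infinity while still respecting the invariance property, but this is immediate since invariance of $\mathcal{A}_{N_j}$ provides a preimage for any prescribed time, in particular for $t_j=j$.
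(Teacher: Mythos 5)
Your proof is correct and follows essentially the same route as the paper: the paper produces the preimage of $y_j$ at time $j$ by taking a complete trajectory $\xi_j$ through $y_j$ inside $\mathcal{A}_{N_j}$ and writing $S_{N_j}(j)\xi_j(-j)=y_j$, which is exactly the invariance property you invoke, and then concludes $y\in\mathcal{A}$ from the definition of $\mathcal{A}$ just as you do. The only cosmetic difference is that you phrase the final compactness step as an explicit argument by contradiction, while the paper leaves it implicit.
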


\begin{proof}
{If $N_j\overset{j\to\infty}{\longrightarrow} \infty$ and $u_j\in\mathcal{A}%
_{N_j}$ for each $j\in\mathbb{N}$, since $\cup_{N>0}\mathcal{A}_{N}\subset
B_0$, there exists $u_0\in{H}$ such that, up to a subsequence, $u_j\to u_0$
weakly in ${H}$. For each $u_j$ there exists a global solution $\xi_j$ of
the semigroup $S_{N_j}$ with $\xi(0)=u_j$ and $\xi (\mathbb{R})\subset%
\mathcal{A}_{N_j}$. Hence 
\begin{equation*}
S_{N_j}(j)\xi_j(-j)=\xi_j(0)=u_j \overset{j\to\infty}{\longrightarrow} u_0%
\hbox{ weakly in }{H}. 
\end{equation*}
This shows that $u_0\in\mathcal{A}$. Hence, $dist_w (\mathcal{A}_{N},%
\mathcal{A}) \overset{N\to\infty}{\longrightarrow} 0$. }
\end{proof}

%\bibliographystyle{abbrv}
%\bibliography{references}

\end{document}